\setlist[enumerate,1]{label={(\alph*)}}
\newtheorem{tm}{Theorem}[section]
\newtheorem{pr}[tm]{Proposition}
\newtheorem{lm}[tm]{Lemma}
\newtheorem{cy}[tm]{Corollary}
\theoremstyle{definition}
\newtheorem{df}[tm]{Definition}
\theoremstyle{remark}
\newtheorem{rem}[tm]{Remark}
\newtheorem{ex}[tm]{Example}
\newcommand{\R}{\mathbb R}
\renewcommand{\L}{\mathcal L}
\newcommand{\MM}{\mathcal M}
\newcommand{\M}{\mathcal M}
\renewcommand{\H}{\mathcal H}
\DeclareMathOperator{\im}{Im}
\DeclareMathOperator{\re}{Re}
\DeclareMathOperator{\id}{Id}
\DeclareMathOperator{\vol}{vol}
\DeclareMathOperator{\Vol}{Vol}
\DeclareMathOperator{\supp}{supp}
\DeclareMathOperator{\ham}{Ham}
\DeclareMathOperator{\cal}{Cal}
\DeclareMathOperator{\flux}{Flux}
\DeclareMathOperator{\tr}{tr}
\DeclareMathOperator{\herm}{Herm}
\DeclareMathOperator{\symp}{Symp}
\newcommand{\CC}{\mathcal C}
\newcommand{\OO}{\mathcal O}
\newcommand{\HH}{\mathcal H}
\newcommand{\X}{\mathcal X}
\newcommand{\G}{\mathcal G}
\newcommand{\I}{\mathcal I}
\newcommand{\E}{\mathcal E}
\begin{document}
\title{The Calabi homomorphism, Lagrangian paths and special Lagrangians}
\author{Jake P. Solomon}
\date{April 2013}
\begin{abstract}
Let $\OO$ be an orbit of the group of Hamiltonian symplectomorphisms acting on the space of Lagrangian submanifolds of a symplectic manifold $(X,\omega).$ We define a functional $\CC:\OO \to \R$ for each differential form $\beta$ of middle degree satisfying $\beta \wedge \omega = 0$ and an exactness condition. If the exactness condition does not hold, $\CC$ is defined on the universal cover of~$\OO.$ A particular instance of $\CC$ recovers the Calabi homomorphism. If $\beta$ is the imaginary part of a holomorphic volume form, the critical points of $\CC$ are special Lagrangian submanifolds. We present evidence that $\CC$ is related by mirror symmetry to a functional introduced by Donaldson to study Einstein-Hermitian metrics on holomorphic vector bundles. In particular, we show that $\CC$ is convex on an open subspace $\OO^+ \subset \OO.$ As a prerequisite, we define a Riemannian metric on $\OO^+$ and analyze its geodesics. Finally, we discuss a generalization of the flux homomorphism to the space of Lagrangian submanifolds, and a Lagrangian analog of the flux conjecture.
\end{abstract}

\maketitle

\pagestyle{plain}

\tableofcontents

\section{Introduction}
\subsection{The Calabi homomorphism}

The starting point of our paper is the Calabi homomorphism. Let $(M,\omega_M)$ be a symplectic manifold of dimension $2m.$ Let $\ham(M,\omega_M)$ denote the group of compactly supported Hamiltonian symplectomorphisms of $(M,\omega_M).$
The universal cover $\widetilde\ham(M,\omega_M)$ is the space of smooth paths $\phi = \{\phi_t\}_{t\in [0,1]}$ in $\ham(M,\omega)$ with $\phi_0 = \id_M,$ modulo end-point preserving homotopy.
Assume for the moment that $M$ is non-compact. Let 
\[
[\phi] \in \widetilde \ham(M,\omega_M)
\]
be the equivalence class of a path $\phi$. Let
\[
H_t : M \to \R, \qquad t \in [0,1],
\]
be the time-dependent Hamiltonian generating $\phi$ such that $\supp(H_t)$ is compact for all $t.$
Calabi~\cite{Ca70} observed that the functional
\[
\cal : \widetilde\ham(M,\omega_M) \to \R
\]
given by
\[
\cal([\phi]) = \int_0^1\int_M H_t \omega^n dt
\]
is well defined. That is, it depends only on the homotopy class of $\phi.$ In fact, $\cal$ is a homomorphism. See~\cite{Ba78,MS98} for further discussion. It is essential in the preceding discussion that $M$ be non-compact. Indeed, for $M$ compact, any homomorphism $\widetilde\ham(M,\omega_M) \to \R$ must be trivial by a theorem of Banyaga~\cite{Ba78}.

Let $X = M \times M$ and $\omega = -\omega_M \boxplus \omega_M.$ The graph of a symplectomorphism of $M$ is a Lagrangian submanifold of $X.$ So, a path $\phi = \{\phi_t\}_{t \in [0,1]}$ in $\ham(M,\omega),$ corresponds to a path of Lagrangian submanifolds $\Lambda = \{\Lambda_t\}_{t \in [0,1]},$ where
\[
\Lambda_t  = \{(x,\phi_t(x))|x \in M\} \subset X.
\]
This correspondence suggests that $\cal$ arises from a more general homotopy invariant of Lagrangian paths. The present paper constructs such an invariant. It turns out that the generalization of $\cal$ to Lagrangian paths is non-trivial for both non-compact and compact manifolds.

Let $(X,\omega)$ be a general symplectic manifold of dimension $2n.$ Let $L$ be an oriented manifold of dimension $n$ and let $d \in H_n(X).$
Let $\L = \L(X,L,d)$ denote the space of oriented Lagrangian submanifolds $\Lambda \subset X$ diffeomorphic to $L,$ and in the case $L$ is compact, having fundamental class $d.$ When $L$ is non-compact, we impose that all $\Lambda \in \L$ coincide with a given $\Lambda_0$ outside a compact subset. Following Akveld-Salamon~\cite{AS01}, we treat $\L$ as an infinite dimensional manifold. The tangent space to $\L$ at $\Lambda \in \L$ is canonically identified with the space of closed $1$-forms on $\Lambda.$ Thus, for a given path $\Lambda = \{\Lambda_t\}_{t \in [0,1]}$ in $\L,$ the time derivative $\frac{d}{dt} \Lambda_t$ is a closed $1$-form on $\Lambda_t.$ The path $\Lambda$ is called \emph{exact} if for each $t$ there exists $h_t : \Lambda_t \to \R$ such that $\frac{d}{dt} \Lambda_t = dh_t.$ In this case, we call $h_t$ an associated family of Hamiltonian functions. An exact path is called \emph{compactly supported} if there exists a compactly supported associated family of Hamiltonian functions.

Let $\beta$ be a closed $n$-form such that $\omega \wedge \beta = 0.$ Let $\Lambda = \{\Lambda_t\}_{t \in [0,1]}$ be an exact compactly supported Lagrangian path in $\L(X,L,d).$ Let $h = \{h_t\}_{t \in [0,1]}$ be an associated family of Hamiltonian functions. If $L$ is non-compact, assume that $h_t$ is normalized so that $\supp(h_t)$ is compact. If $L$ is compact, assume that $\int_d \beta = 0.$ We define
\[
\CC(\Lambda) = \int_0^1 dt \int_{\Lambda_t} h_t \beta.
\]
Our main theorem is the following.
\begin{tm}\label{tm:main} The functional $\CC(\Lambda)$
depends only on the endpoint preserving homotopy class of $\Lambda.$
\end{tm}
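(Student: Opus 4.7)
The plan is to verify first that $\CC(\Lambda)$ is independent of the choice of associated Hamiltonian family $h$, and then to show that $\frac{d}{ds}\CC(\Lambda^s)=0$ along any smooth endpoint-preserving homotopy $\{\Lambda^s\}_{s\in[0,1]}$. Well-definedness is straightforward: when $L$ is non-compact the compact-support condition pins down $h_t$, while when $L$ is compact the indeterminacy $h_t\mapsto h_t+c(t)$ contributes $\int_0^1 c(t)\int_d\beta\,dt=0$ by the normalization $\int_d\beta=0$.

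For the main claim, I would realize the homotopy by a smooth $\Phi:[0,1]_s\times[0,1]_t\times L\to X$ with $\iota_{s,t}:=\Phi(s,t,\cdot)$ a Lagrangian embedding of $\Lambda^s_t$, and introduce the velocity fields $v_s=\partial_s\Phi$, $v_t=\partial_t\Phi$, the scalar $f=\omega(v_s,v_t)$, the $n$-form $\beta_0=\iota_{s,t}^*\beta$, the closed $1$-forms $\xi_s=\iota_{s,t}^*(i_{v_s}\omega)$ and $\xi_t=\iota_{s,t}^*(i_{v_t}\omega)=dh$, and the $(n-1)$-forms $\alpha_s=\iota_{s,t}^*(i_{v_s}\beta)$, $\alpha_t=\iota_{s,t}^*(i_{v_t}\beta)$. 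Decomposing by bidegree, $\Phi^*\omega=ds\wedge\xi_s+dt\wedge\xi_t+f\,ds\wedge dt$ (the pure-$L$ part vanishing by the Lagrangian condition). Two identities drive the argument: extracting the $ds\wedge dt$ part of $d\Phi^*\omega=0$ yields
\[
\partial_s\xi_t-\partial_t\xi_s=-df,
\]
while extracting the bidegree $(2,n)$ component of $\Phi^*(\omega\wedge\beta)=0$ yields
\[
f\beta_0=\xi_s\wedge\alpha_t-\xi_t\wedge\alpha_s.
\]
The hypothesis $\omega\wedge\beta=0$ enters only through this second identity, and is essential. Endpoint preservation forces $\xi_s|_{t=0}=\xi_s|_{t=1}=0$, since at those times $v_s$ is tangent to the fixed Lagrangians $\Lambda_0,\Lambda_1$, making $i_{v_s}\omega$ restrict to zero. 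Combined with $\xi_t=dh$ and the first identity, integration in $t$ gives $\xi_s=dk_s$ with $k_s(t,x)=\int_0^t(\partial_sh+f)(\tau,x)\,d\tau$.

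Finally, compute $\partial_s\int_Lh\beta_0$. Using $\partial_s\beta_0=d\alpha_s$ (Cartan plus $d\beta=0$) and integration by parts on $L$, the Leibniz rule produces $\int_L(\partial_sh)\beta_0-\int_L\xi_t\wedge\alpha_s$. Substituting the second key identity converts this to $\int_L(\partial_sh+f)\beta_0-\int_L\xi_s\wedge\alpha_t$; then using $\xi_s=dk_s$, $d\alpha_t=\partial_t\beta_0$, and a further integration by parts on $L$ rewrites the expression as $\int_L(\partial_sh+f)\beta_0+\int_Lk_s\,\partial_t\beta_0$. Integrating over $t\in[0,1]$ and performing one last integration by parts in $t$ (using $k_s(0)=0$ and $\partial_tk_s=\partial_sh+f$) causes the two bulk integrals to cancel identically, leaving
\[
\frac{d}{ds}\CC(\Lambda^s)=\int_Lk_s(1)\,\beta_0|_{t=1}.
\]
Since $dk_s(1)=\xi_s|_{t=1}=0$, the function $k_s(1)$ is locally constant on $L$; this constant times $\int_L\beta_0|_{t=1}=\int_d\beta$ vanishes in the compact case by the normalization $\int_d\beta=0$, and in the non-compact case the locally constant, compactly supported function $k_s(1)$ must vanish identically. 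The main obstacle is the sign bookkeeping around the two key identities: without the identity $f\beta_0=\xi_s\wedge\alpha_t-\xi_t\wedge\alpha_s$ extracted from $\omega\wedge\beta=0$, a stray $2\int f\beta_0$ would persist and destroy homotopy invariance.
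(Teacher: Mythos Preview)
Your argument is correct. The two key identities you extract are precisely the content of the paper's Lemma~3.1 (your identity $f\beta_0=\xi_s\wedge\alpha_t-\xi_t\wedge\alpha_s$ is the pointwise form of that lemma, obtained from $i_{v_s}i_{v_t}(\omega\wedge\beta)=0$) and of the Banyaga-type Lemma~2.2/2.4 (your $\partial_s\xi_t-\partial_t\xi_s=-df$). So the algebraic core is the same.

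The packaging, however, is genuinely different. The paper lifts the two-parameter family to ambient Hamiltonian diffeomorphisms $\phi_{s,t}\in\ham(X,\omega)$ via Lemma~2.8, works with global Hamiltonians $H_{s,t},K_{s,t}$ on $X$, and phrases the result as closedness of a $1$-form $\eta$ on $[0,1]^2$ (Lemma~3.2), from which Theorem~1.1 follows by Stokes and the first-variation Proposition~3.3. You instead stay entirely on $L$: you never extend to $\ham(X,\omega)$, you recover exactness of $\xi_s$ directly from $\partial_t(\xi_s-dk_s)=0$ together with the boundary condition $\xi_s|_{t=0}=0$, and you compute $\partial_s\CC$ by explicit integration by parts in $t$. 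This is more elementary---it bypasses Lemmas~2.4 and~2.8 and the ambient extension entirely---and it still yields the first-variation formula $\partial_s\CC=\int_L k_s(1)\,\beta_0|_{t=1}$ as a byproduct (simply drop the endpoint condition at $t=1$). The paper's route, on the other hand, makes the connection to the classical Calabi homomorphism on $\ham$ more transparent and sets up the formalism used later for the energy functional.

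One small point worth making explicit: in the non-compact case your claim that $k_s(1)$ is compactly supported uses that the lift $\Phi$ can be taken constant in $(s,t)$ outside a compact set, which follows from the paper's standing convention that all $f\in\X$ agree with a fixed $f_0$ outside a compact set.
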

Theorem~\ref{tm:main} implies that $\CC$ descends to a functional on the space of end-point preserving homotopy classes of exact compactly supported Lagrangian paths. We write $\CC([\Lambda]) = \CC(\Lambda)$. In particular, let $\OO$ be an orbit of $\ham(X,\omega)$ acting on $\L(X,L,d).$ Choosing a base-point $\Lambda_* \in \OO,$ identify the universal cover $\widetilde \OO$ with the space of paths in $\OO$ starting at $\Lambda_*$, modulo end-point preserving homotopy. Any path in $\OO$ is exact and compactly supported. Thus we have defined a functional
\[
\CC : \widetilde \OO \to \R.
\]

As the following example indicates, it is not hard to find forms $\beta$ as required for the definition of $\CC.$
\begin{ex}\label{ex:prim}
Let $(X,\omega)$ be a compact K\"ahler manifold. Let $\beta$ be the harmonic representative of a primitive $n^{th}$ cohomology class. Recall that the Laplace-Beltrami operator commutes with exterior multiplication by $\omega.$ So, the Lefschetz decomposition implies that $\omega \wedge \beta = 0.$ Similarly, for general symplectic manifolds, we can take $\beta$ a $d + d^\Lambda$-primitive harmonic $n$-form in the sense of~\cite{TY09}.
\end{ex}
\begin{tm}\label{tm:exact}
Suppose there exist forms $\lambda,\gamma$ and a constant $c \neq -1$ such that
\begin{gather}
d\lambda = \omega, \qquad d\gamma = \beta, \notag\\
\lambda \wedge \beta = -c \omega \wedge \gamma.\label{eq:lbog}
\end{gather}
Then
\[
\CC(\Lambda) = \frac{1}{c+1} \left(\int_{\Lambda_0} \lambda\wedge \gamma - \int_{\Lambda_1} \lambda \wedge \gamma\right).
\]
So, $\CC(\Lambda)$ depends only on $\Lambda_0$ and $\Lambda_1.$
\end{tm}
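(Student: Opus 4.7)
The plan is to identify an antiderivative of $(c+1)\omega\wedge\gamma$ from the given data and then use Cartan's formula to reduce $\frac{d}{dt}\int_{\Lambda_t}\lambda\wedge\gamma$ to $-(c+1)\int_{\Lambda_t}h_t\beta$. Integrating in $t$ from $0$ to $1$ and dividing by $-(c+1)$ then immediately yields the stated formula.

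First, the key identity forced by \eqref{eq:lbog}: since $\lambda$ is a $1$-form,
\[
d(\lambda\wedge\gamma) = d\lambda\wedge\gamma - \lambda\wedge d\gamma = \omega\wedge\gamma - \lambda\wedge\beta = (c+1)\,\omega\wedge\gamma.
\]
So the hypothesis effectively promotes $(c+1)\omega\wedge\gamma$ to an exact form on $X.$

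Next I would differentiate $\int_{\Lambda_t}\lambda\wedge\gamma.$ Extending $h_t$ to a function on $X,$ let $X_{h_t}$ be the Hamiltonian vector field with $\iota_{X_{h_t}}\omega = dh_t$ (the sign convention consistent with $\tfrac{d}{dt}\Lambda_t = dh_t$). Cartan's formula gives
\[
\frac{d}{dt}\int_{\Lambda_t}\lambda\wedge\gamma = \int_{\Lambda_t}d\iota_{X_{h_t}}(\lambda\wedge\gamma) + \int_{\Lambda_t}\iota_{X_{h_t}}d(\lambda\wedge\gamma).
\]
The first summand vanishes by Stokes and the compact support of $h_t.$ For the second, substituting the identity above and expanding $\iota_V(\omega\wedge\gamma) = (\iota_V\omega)\wedge\gamma + \omega\wedge\iota_V\gamma,$ the term containing $\omega$ on the outside drops upon restriction to the Lagrangian $\Lambda_t,$ leaving $(c+1)\int_{\Lambda_t}dh_t\wedge\gamma.$ A second integration by parts, $dh_t\wedge\gamma = d(h_t\gamma) - h_t\beta,$ and another application of Stokes produce
\[
\frac{d}{dt}\int_{\Lambda_t}\lambda\wedge\gamma = -(c+1)\int_{\Lambda_t}h_t\beta.
\]

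The main obstacle is justifying the two uses of Stokes' theorem uniformly: in the non-compact case, compact support of $h_t$ (and thus of $X_{h_t}$) kills the boundary contributions of $\iota_{X_{h_t}}(\lambda\wedge\gamma)$ and of $h_t\gamma$ on $\Lambda_t;$ in the compact case, $\Lambda_t$ is closed and the hypothesis $\int_d\beta = 0$ ensures that $\int_{\Lambda_t}h_t\beta$ is insensitive to shifts of $h_t$ by a constant, so the identity is consistent with Theorem~\ref{tm:main}. A minor secondary point is that the individual integrals $\int_{\Lambda_i}\lambda\wedge\gamma$ need not converge when $\lambda,\gamma$ are not compactly supported; however, in the non-compact case $\Lambda_0$ and $\Lambda_1$ coincide outside a compact set, so their \emph{difference} in the claim remains well defined.
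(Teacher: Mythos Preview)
Your argument is correct and follows the same overall strategy as the paper: differentiate $\int_{\Lambda_t}\lambda\wedge\gamma$ via Cartan's formula along a Hamiltonian flow, kill the $\omega$-terms using that $\Lambda_t$ is Lagrangian, and integrate by parts to reach $-(c+1)\int_{\Lambda_t}h_t\beta$. The only organizational difference is where hypothesis~\eqref{eq:lbog} enters: you package it up front as the identity $d(\lambda\wedge\gamma)=(c+1)\,\omega\wedge\gamma$ and then apply $L_{X_{h_t}}=d\iota_{X_{h_t}}+\iota_{X_{h_t}}d$ directly to $\lambda\wedge\gamma$, whereas the paper distributes the Lie derivative over the factors, $L_{\xi_t}(\lambda\wedge\gamma)=(L_{\xi_t}\lambda)\wedge\gamma+\lambda\wedge(L_{\xi_t}\gamma)$, and invokes~\eqref{eq:lbog} afterward through $i_{\xi_t}(\lambda\wedge\beta)=-c\,i_{\xi_t}(\omega\wedge\gamma)$. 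Your packaging is a bit more economical; the paper's version makes more visible the separate roles of the two exactness assumptions $d\lambda=\omega$ and $d\gamma=\beta$.
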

In particular, Theorem~\ref{tm:exact} implies that $\CC$ further descends to a functional
\[
\CC : \OO \to \R.
\]
The following example motivates condition~\eqref{eq:lbog}.
\begin{ex}\label{ex:exact}
Suppose $\omega = d\lambda.$ Let $\xi$ be the Liouville vector-field corresponding to $\lambda.$ That is, $i_\xi \omega = \lambda.$ Suppose that
\[
L_\xi \beta = c\beta.
\]
with $c \neq 0.$ Take $\gamma = i_\xi \beta/c.$ Then
\[
0 = i_\xi(\omega \wedge \beta) = \lambda \wedge \beta + c \omega \wedge \gamma.
\]
implying condition~\eqref{eq:lbog}.
\end{ex}

In Section~\ref{sec:class}, we give explicit $\beta,\gamma,$ for which Theorems~\ref{tm:main} and~\ref{tm:exact} recover known properties of the Calabi homomorphism. In particular, $\CC$ is shown to generalize the Calabi homomorphism.

Another generalization of the Calabi homomorphism in the context of compact symplectic manifolds is the notion of a Calabi quasimorphism due to Entov-Polterovich \cite{EP03}. Though their construction is fundamentally different from that of the present paper, it is tempting to speculate on a possible connection.

\subsection{Special Lagrangians and mirror symmetry}

In Section~\ref{sec:slag}, we take $X$ a Calabi-Yau manifold and $\beta$ the imaginary part of a holomorphic volume form $\Omega.$ Recall that Harvey and Lawson~\cite{HL82} called a Lagrangian submanifold $\Gamma \subset X$ \emph{special} if $\im \Omega|_\Gamma = 0.$ They showed that special Lagrangians are volume minimizing. Particularly following the conjecture of Strominger-Yau-Zaslow on mirror symmetry~\cite{SY96}, special Lagrangians have received considerable attention~\cite{Hi97,Jo02,Jo03,TY02}. We show the functional $\CC$ gives a variational principle for special Lagrangians. Namely, varying one endpoint of the path $\Lambda$ while holding the other fixed, the critical points of $\CC([\Lambda])$ are special Lagrangian. We compute the first and second variations of $\CC$ and compare with the volume functional. In particular, we show that all critical points of $\CC$ are local minima.

Furthermore, we show that $\CC$ is convex on the subspace $\L^+ \subset \L$ consisting of Lagrangian submanifolds on which $\re \Omega$ restricts to a volume form. More precisely, let $\OO^+ \subset \L^+$ be an exact compactly supported isotopy class. That is, $\OO^+$ is the set of all $\Lambda \in \L^+$ that can be connected by an exact compactly supported path to a given point in $\L^+.$ We exhibit a natural Riemannian metric on $\OO^+$ and compute the Euler-Lagrange equation of the associated energy functional. Defining geodesics to be solutions of the Euler-Lagrange equation, we show that $\CC$ restricted to any geodesic is convex.

Section~\ref{ssec:mirror} explains the analogy under mirror symmetry between $\CC$ and a functional $\MM$ introduced by Donaldson in his work~\cite{Do85} on the Kobayashi-Hitchin correspondence. Recall that Kobayashi~\cite{Ko82} proved that the existence of an Einstein-Hermitian metric on a holomorphic vector bundle implies the algebro-geometric Mumford stability condition. Kobayashi and Hitchin conjectured the converse. Donaldson~\cite{Do85,Do87} and Uhlenbeck-Yau~\cite{UY86} proved the conjecture. According to mirror symmetry, holomorphic vector bundles should be roughly analogous to Lagrangian submanifolds~\cite{Ko95}. We present evidence that the space of Hermitian metrics on a holomorphic vector bundle is analogous to an exact isotopy class $\OO^+ \subset \L^+.$ The stability of an exact isotopy class should be related to the existence of a special Lagrangian representative~\cite{Fu01,Th01,Do02}.

Donaldson's functional $\MM$ is a homotopy invariant of a path of Hermitian metrics on a holomorphic vector bundle $E.$ Critical points of $\MM$ are Einstein-Hermitian metrics. A key property of $\MM$ is its convexity along geodesics in the space of metrics. The analogy between $\CC$ and $\MM$ should already be clear. Further parallels become evident when we bring in the Yang-Mills functional on metrics on $E,$ which is analogous under mirror symmetry to the volume of a Lagrangian submanifold. The second variations of the Yang-Mills and volume functionals are both fourth-order differential operators. On the other hand, the second variations of $\CC$ and $\MM$ are both second order. That is, as noted by Donaldson, $\MM$ and $\CC$ are non-linear generalizations of Dirichlet's variational principle for the Poisson equation. The Yang-Mills and volume functionals both admit topological lower bounds. On the other hand, $\MM$ is bounded below if and only if $E$ is semi-stable. The conditions under which $\CC$ is bounded below remain to be understood.

Like Donaldson's functional, we expect the functional $\CC$ to play an important role in the study of geometric stability. However, we believe that $\CC$ must be corrected by contributions from holomorphic disks. The exact formula for such contributions will be discussed in a forthcoming paper~\cite{ST}. In fact, the considerations of that paper led us to study $\CC.$

\subsection{K\"ahler geometry}

Another analog of $\CC$ is Mabuchi's $K$-energy functional~\cite{Mab86} on the space of K\"ahler metrics representing a fixed cohomology class. The critical points of the $K$-energy functional are constant scalar curvature K\"ahler metrics, which include K\"ahler-Einstein metrics as a special case. The $K$-energy is known to be convex with respect to a natural Riemannian metric on the space of K\"ahler metrics studied by Mabuchi, Semmes and Donaldson~\cite{Mab87,Sem92,Don99a,Don02}. While it is easy to construct geodesics on the space of metrics on a holomorphic vector bundle, to construct geodesics on the space of K\"ahler metrics it is necessary to solve the homogeneous complex Monge-Amp\`ere equation. Nonetheless, many beautiful results have been obtained. For example, we refer to the work of Chen~\cite{Che00} and Chen-Tian \cite{ChT08}. It seems the questions of existence of geodesics in the space $\OO^+$ and in the space of K\"ahler metrics share important features. The author plans to address the existence question for geodesics in $\OO^+$ in future research.

In a companion paper~\cite{So13a}, we show that the Riemannian metric on $\OO^+$ has non-positive curvature. The Mabuchi-Semmes-Donaldson metric on the space of K\"ahler metrics is also known to be negatively curved. Building on the parallel with K\"ahler geometry, we outline in~\cite{So13a} a program of research towards existence criteria for special Lagrangian submanifolds based on the functional $\CC.$

\subsection{Lagrangian flux}

In Section~\ref{sec:flux} we discuss a generalization of the flux homomorphism for Lagrangian paths. The flux homomorphism first appeared in the same paper of Calabi~\cite{Ca70} that introduced the Calabi homomorphism. We recall the definition for the reader's convenience. Let $\symp(M,\omega_M)$ denote the identity component of the compactly supported symplectomorphism group of $(M,\omega_M).$ The universal cover $\widetilde\symp(M,\omega_M)$ is the space of smooth paths $\phi = \{\phi_t\}_{t\in [0,1]}$ in $\symp(M,\omega_M)$ with $\phi_0 = \id_M,$ modulo end-point preserving homotopy. Let $[\phi] \in \widetilde \symp(M,\omega_M)$ be the equivalence class of a path $\phi$ and let $\xi_t$ be the time-dependent symplectic vector field generating $\phi.$ By definition,
\[
\flux : \widetilde \symp(M,\omega) \to H^1(M,\R)
\]
is given by 
\[
\flux([\phi]) = \int_0^1 [i(\xi_t)\omega_M] dt.
\]
Calabi observed that the right hand side of the preceding equation is unchanged by an end-point preserving homotopy of $\phi$ and thus $\flux$ is well-defined. In fact, it is a homomorphism. See~\cite{Ba78,MS98} for further discussion. 

The graph of a symplectomorphism $\psi \in \symp(M,\omega_M)$ being a Lagrangian submanifold of $(M\times M,-\omega_M \boxplus \omega_M),$ it is natural to look for a generalization of $\flux$ to paths of Lagrangian submanifolds. Such a generalization of $\flux$ has be known for some time~\cite{Fu02a}. See Section~\ref{sec:flux} for the definition.

We formulate a Lagrangian analog of the flux conjecture and explore its implications. Considering $\pi_1(\symp(M,\omega_M))$ as a subgroup of $\widetilde \symp(M,\omega_M)$, let
\[
G_M = \flux(\pi_1(\symp(M,\omega_M))) \subset H^1(M,\R).
\]
Suppose that $M$ is compact. The flux conjecture asserts that $G_M$ is discrete or equivalently that $\ham(M,\omega_M)$ is closed in $\symp(M,\omega_M)$ in the $C^1$ topology~\cite{MS98}. It was proven in full generality by Ono~\cite{On06}. Fix $\Lambda_* \in \L(X,L,d)$ and let $\L_*$ be its path-connected component. Using Lagrangian flux, we define a subgroup group $G_* \subset H^1(\Lambda_*,\R)$ analogous to $G_M.$ If $G_*$ is discrete, we show the orbit of $\Lambda_*$ under $\ham(X,\omega)$ is closed. If in addition the restriction map $H^1(X) \to H^1(\Lambda_*)$ is surjective, we show that $\L_*/\ham(X,\omega)$ is Hausdorff. We compare these implications with results of Ono~\cite{On07}.

Fukaya~\cite{Fu01} defined an orbit $\OO$ of $\ham(X,\omega)$ acting on $\L$ to be \emph{stable} if it has a Hausdorff neighborhood in the quotient space $\L/\ham(X,\omega).$ He asked whether stability of $\OO$ is equivalent to the existence of a special Lagrangian representative. In light of the above results, it would be natural to reformulate Fukaya's question in terms of Lagrangian flux and the functional $\CC.$ We leave this to future work.

Section~\ref{sec:b} reviews general background on families of diffeomorphisms and the space of Lagrangian submanifolds. The proofs of Theorems~\ref{tm:main} and~\ref{tm:exact} occupy Section~\ref{sec:p}.

\subsection{Acknowledgements}
The author would like to thank G. Tian for introducing him to the subject of geometric stability and for his constant encouragement. Also, the author would like to thank L. Polterovich, E. Shelukhin, R. Thomas and E. Witten for helpful conversations. The author was partially supported by Israel Science Foundation grant 1321/2009 and Marie Curie grant No. 239381.

\section{Background}\label{sec:b}
\subsection{Terminology, notation, and conventions}\label{ssec:tn}
Let $\Theta$ be a manifold with corners. Throughout the paper, we say a family $\{\phi_{\theta}\}_{\theta \in \Theta}$ of maps of manifolds $X\to Y$ is \emph{smooth} if there exists a smooth map
\[
\phi : \Theta \times X \to Y
\]
such that $\phi_\theta = \phi|_{\{\theta\} \times X}.$

We denote the space of differential $k$-forms on a manifold $X$ by $A^k(X).$
Let $f : X \to Y$ be a map of smooth manifolds, let $v$ be a section of $f^*TY$ and let $\rho \in A^k(Y).$ Let $\xi_1,\ldots,\xi_{k-1},$ be vector fields on $X.$ We extend the interior product to a map
\[
i_v : A^k(Y) \to A^{k-1}(X)
\]
given by
\[
(i_v \rho)(\xi_1,\ldots,\xi_{k-1})|_x = \rho_{f(x)}(v(x),df_x(\xi_1(x)),\ldots,df_x(\xi_{k-1}(x))).
\]
\begin{rem}\label{rem:car}
One advantage of the notation $i_v$ above is the following version of Cartan's formula for the Lie derivative. Let $f_t : X \to Y$ be a smooth family of maps. Let $v_t = \frac{d}{dt} f_t$ and let $\rho \in A^k(Y).$ Then
\[
\frac{d}{dt} f_t^* \rho = d i_{v_t} \rho + i_{v_t} d\rho.
\]
Homotopy invariance of de Rham cohomology follows immediately by integrating this formula over $t.$
\end{rem}

We recount the following conventions to avoid confusion with regard to signs. Let $\xi$ and $\zeta$ be vector fields on $X$, and let $f: X \to \R$. Throughout the present article we adhere to the convention
\[
[\xi,\zeta]f = \xi(\zeta f) - \zeta(\xi f). 
\]
Suppose now that $(X,\omega)$ is a symplectic manifold and let $H : X \to \R.$ The corresponding Hamiltonian vector field is defined by
\[
i_\xi \omega = dH.
\]
Let $K : X \to \R$ and let $\zeta$ be the corresponding Hamiltonian vector field. We adhere to the convention that the Poisson bracket is given by
\[
\{H,K\} = \xi K = \omega(\zeta,\xi).
\]
Thus $[\xi,\zeta]$ is the Hamiltonian vector field corresponding to $\{H,K\}.$

\subsection{Two parameter families}
Let $X$ be a smooth manifold.
\begin{df}
Let $\phi_{s,t}$ be a smooth two-parameter family of diffeomorphisms of $X.$ The families of vector fields \emph{associated} to $\phi_{s,t}$ are given by
\[
\xi_{s,t}(x) = \frac{\partial \phi_{s,t}}{\partial t}\circ\phi_{s,t}^{-1}(x) \qquad
\zeta_{s,t}(x) = \frac{\partial\phi_{s,t}}{\partial s} \circ \phi_{s,t}^{-1}(x).
\]
\end{df}
The following Lemma was proved by Banyaga~\cite[Proposition I.1.1]{Ba78}.
\begin{lm}\label{lm:ban}
Let $\xi_{s,t},\zeta_{s,t},$ be the families of vector fields on $X$ associated to a two-parameter family of diffeomorphisms $\phi_{s,t}$ of $X.$ Then
\begin{equation}\label{eq:int}
\frac{\partial \xi_{s,t}}{\partial s} - \frac{\partial \zeta_{s,t}}{\partial t} = [\xi_{s,t},\zeta_{s,t}].
\end{equation}
\end{lm}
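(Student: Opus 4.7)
The plan is to test both sides of \eqref{eq:int} against an arbitrary smooth function $f : X \to \R$ and invoke equality of mixed partial derivatives of $f \circ \phi_{s,t}.$ The key computational input is the identity
\[
\frac{\partial}{\partial t}(f\circ \phi_{s,t}) = (\xi_{s,t} f)\circ \phi_{s,t}, \qquad \frac{\partial}{\partial s}(f\circ \phi_{s,t}) = (\zeta_{s,t} f)\circ \phi_{s,t},
\]
which follows directly from the defining formulas for $\xi_{s,t}$ and $\zeta_{s,t}$: writing $y = \phi_{s,t}^{-1}(x),$ the chain rule gives $\partial_t(f\circ\phi_{s,t})(y) = df_{\phi_{s,t}(y)}\bigl(\partial_t \phi_{s,t}(y)\bigr) = (\xi_{s,t} f)(\phi_{s,t}(y)).$

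Next, I would differentiate once more. Applying $\partial/\partial s$ to $(\xi_{s,t} f)\circ \phi_{s,t}$ and using the product rule to account for the $s$-dependence of the vector field $\xi_{s,t}$ itself as well as of the composition $\phi_{s,t},$ one obtains
\[
\frac{\partial^2}{\partial s \, \partial t}(f\circ \phi_{s,t}) = \left(\tfrac{\partial \xi_{s,t}}{\partial s} f\right)\circ\phi_{s,t} + \bigl(\zeta_{s,t}(\xi_{s,t} f)\bigr)\circ\phi_{s,t}.
\]
The analogous computation with the roles of $s$ and $t$ swapped yields
\[
\frac{\partial^2}{\partial t \, \partial s}(f\circ \phi_{s,t}) = \left(\tfrac{\partial \zeta_{s,t}}{\partial t} f\right)\circ\phi_{s,t} + \bigl(\xi_{s,t}(\zeta_{s,t} f)\bigr)\circ\phi_{s,t}.
\]

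Equating the two mixed partials, cancelling the common factor of $\phi_{s,t}$ (which is a diffeomorphism), and rearranging gives
\[
\left(\tfrac{\partial \xi_{s,t}}{\partial s} - \tfrac{\partial \zeta_{s,t}}{\partial t}\right) f = \xi_{s,t}(\zeta_{s,t} f) - \zeta_{s,t}(\xi_{s,t} f) = [\xi_{s,t},\zeta_{s,t}] f,
\]
using the sign convention for the bracket recalled in Section~\ref{ssec:tn}. Since $f$ was arbitrary, \eqref{eq:int} follows. The only point requiring any care is the product rule in the computation of the second derivative, where one must distinguish the contribution of $\partial_s \xi_{s,t}$ (differentiating the vector field coefficients) from the contribution of $\zeta_{s,t}$ (differentiating through $\phi_{s,t}$); everything else is formal.
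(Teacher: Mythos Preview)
Your argument is correct: pulling back an arbitrary test function by $\phi_{s,t}$ and equating mixed partials is exactly the standard proof, and you have handled the sign convention for the Lie bracket consistently with Section~\ref{ssec:tn}. The paper itself does not supply a proof of this lemma but simply attributes it to Banyaga~\cite[Proposition~I.1.1]{Ba78}; your write-up is essentially the classical computation one finds there, so there is nothing to compare.
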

Now let $(X,\omega)$ be a symplectic manifold.
\begin{df}
Let $\phi_{s,t}$ be a two-parameter family of Hamiltonian symplectomorphisms of $X$ and let $\xi_{s,t},\zeta_{s,t},$ be the associated families of vector fields. Families of Hamiltonian functions $H_{s,t},K_{s,t},$ on $X$ are said to be \emph{associated} to $\phi_{s,t}$ if their corresponding families of Hamiltonian vector fields are $\xi_{s,t}$ and $\zeta_{s,t}$ respectively.
\end{df}
\begin{rem}\label{rem:eah}
For any family $\phi_{s,t}$ of Hamiltonian symplectomorphisms, there exist associated families of Hamiltonians by a standard argument using the flux homomorphism. See \cite[Prop. 10.17]{MS98}.
\end{rem}

Part of the following lemma may be found in~\cite[Lemma 5.3.44]{FO09}.
\begin{lm}\label{lm:Hban}
Let $H_{s,t},K_{s,t},$ be a pair of two-parameter families of functions on $X.$ If $X$ is non-compact, assume that $H_{s,t},K_{s,t},$ have compact support. Otherwise, assume the normalization
\begin{equation}\label{eq:norm}
\int_X H_{s,t} \omega^n = \int_X K_{s,t} \omega^n = 0.
\end{equation}
If $H_{s,t},K_{s,t}$ are the Hamiltonian functions associated to a two parameter family of Hamiltonian symplectomorphisms then
\begin{equation*}
\frac{\partial H_{s,t}}{\partial s} - \frac{\partial K_{s,t}}{\partial t} = \{H_{s,t},K_{s,t}\}.
\end{equation*}
More generally, if we omit the compact support (resp. normalization) requirement, there exists a function of two variables $c(s,t)$ such that
\begin{equation}\label{eq:intc}
\frac{\partial H_{s,t}}{\partial s} - \frac{\partial K_{s,t}}{\partial t} - \{H_{s,t},K_{s,t}\} = c(s,t).
\end{equation}
\end{lm}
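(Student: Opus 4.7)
The plan is to pass from the identity on vector fields supplied by Banyaga's Lemma~\ref{lm:ban} to the desired identity on Hamiltonian functions by applying the ``function $\to$ Hamiltonian vector field'' correspondence, and then to use either the compact support hypothesis or the normalization hypothesis to eliminate the constant-of-integration ambiguity that comes from the kernel of this correspondence (i.e., functions of $(s,t)$ alone).

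More concretely, I would first record three commutation identities. From $i_{\xi_{s,t}}\omega = dH_{s,t}$ and the $(s,t)$-independence of $\omega$, differentiating in $s$ yields $i_{\partial_s \xi_{s,t}}\omega = d(\partial_s H_{s,t})$, so $\partial_s \xi_{s,t}$ is the Hamiltonian vector field of $\partial_s H_{s,t}$. Similarly $\partial_t \zeta_{s,t}$ is the Hamiltonian vector field of $\partial_t K_{s,t}$. Finally, the convention recalled in Section~\ref{ssec:tn} states that $[\xi_{s,t},\zeta_{s,t}]$ is the Hamiltonian vector field of $\{H_{s,t},K_{s,t}\}$. Subtracting and applying Lemma~\ref{lm:ban} shows that the Hamiltonian vector field associated to
\[
F(s,t) := \frac{\partial H_{s,t}}{\partial s} - \frac{\partial K_{s,t}}{\partial t} - \{H_{s,t},K_{s,t}\}
\]
vanishes identically on $X$. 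Hence $dF(s,t)$ vanishes along $X$ for each fixed $(s,t)$, so (assuming $X$ connected, which we may by working component by component) $F(s,t)$ depends only on the parameters $(s,t)$. This establishes~\eqref{eq:intc} with $c(s,t) = F(s,t)$.

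To finish under the compact support hypothesis, observe that $H_{s,t}$, $K_{s,t}$, their partial derivatives, and $\{H_{s,t},K_{s,t}\}$ all have compact support in $X$ for each $(s,t)$. Therefore $F(s,t)$, which is constant in $x \in X$, must be identically zero. To finish under the normalization hypothesis~\eqref{eq:norm}, integrate $F(s,t)$ against $\omega^n$ over the compact manifold $X$. The first two terms integrate to zero by differentiating~\eqref{eq:norm} under the integral sign, while the Poisson bracket term can be written as $\{H_{s,t},K_{s,t}\}\omega^n = L_{\xi_{s,t}}(K_{s,t}\omega^n) = d\bigl(i_{\xi_{s,t}}(K_{s,t}\omega^n)\bigr)$, using that $\omega^n$ is preserved by the Hamiltonian flow of $\xi_{s,t}$; Stokes then gives $\int_X \{H_{s,t},K_{s,t}\}\omega^n = 0$. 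Hence $c(s,t)\Vol(X) = 0$, so $c \equiv 0$.

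The only place requiring care is the verification that parameter derivatives commute with the function-to-Hamiltonian-vector-field map; this is straightforward because $\omega$ is independent of $(s,t)$. The rest amounts to a clean bookkeeping translation of Lemma~\ref{lm:ban} via the sign conventions of Section~\ref{ssec:tn} and a standard Stokes-type argument to kill the scalar ambiguity in the compact case.
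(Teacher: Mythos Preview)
Your proof is correct and follows essentially the same route as the paper: you deduce~\eqref{eq:intc} from Lemma~\ref{lm:ban} via the function-to-Hamiltonian-vector-field correspondence, then kill the constant using compact support or normalization together with $\int_X \{H,K\}\,\omega^n = 0$. The only difference is that you spell out the intermediate identities (e.g.\ $i_{\partial_s\xi}\omega = d(\partial_s H)$ and $\{H,K\}\omega^n = d\,i_\xi(K\omega^n)$) that the paper leaves implicit.
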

\begin{proof}
Lemma~\ref{lm:ban} immediately implies there exists $c(s,t)$ satisfying equation~\eqref{eq:intc}. If $X$ is non-compact, then by assumption the left-hand side of equation~\eqref{eq:intc} has compact support. Since $c(s,t)$ is constant on $X,$ it must vanish. Suppose now that $X$ is compact. The Poisson bracket of any two functions has integral zero with respect to the Liouville measure $\omega^n.$ So, taking partial derivatives of equation~\eqref{eq:norm} with respect to $s$ and $t,$ the left-hand side of equation~\eqref{eq:intc} has integral zero with respect to $\omega^n.$ Therefore, $c(s,t) = 0.$ The lemma follows.
\end{proof}

\subsection{The space of Lagrangian submanifolds}
We summarize and slightly extend the discussion of Akveld-Salamon~\cite[Section 2]{AS01}.

Let $\X = \X(X,L,d)$ denote the space of Lagrangian embeddings
\[
f : L \to X, \qquad f^* \omega = 0,
\]
that, in the case that $L$ is compact, represent $d \in H_n(X).$ If $L$ is non-compact, we impose that all $f \in \X$ agree with a given $f_0$ outside a compact subset of $L.$ Let $\G$ denote the group of orientation preserving compactly supported diffeomorphisms of $L.$ Define an action
\[
\G \times \X \to \X
\]
by
\[
(\psi,f) \mapsto f \circ \psi.
\]
The space of Lagrangian submanifolds $\L(X,L,d)$ is defined to be the quotient $\L = \X/\G.$ The equivalence class $\Lambda=[f]$ of a Lagrangian embedding $f : L \to X$ is identified with the submanifold $f(L)\subset X.$

Let $\Theta$ be a contractible manifold with corners. Let $\Lambda : \Theta \to \L$ be a map. Write $\Lambda_\theta = \Lambda(\theta).$ We say that $\Lambda$ is \emph{smooth} if there exists a smooth lifting $f$ such that the following diagram commutes:
\[
\xymatrix{&\X\ar[d] \\
\Theta \ar[ur]^{f} \ar[r]^{\Lambda} & \L
}
\]
Let $\Lambda: (-\epsilon,\epsilon) \to \L$ be a smooth path with lifting $f : (-\epsilon,\epsilon)\to \X.$ Let $v_t$ be the vector field along $f_t$ defined by
\[
v_t = \frac{d}{dt} f_t.
\]
Write
\[
\alpha_t = (f_t)_*i_{v_t}\omega \in A^1(\Lambda_t).
\]
By Remark~\ref{rem:car},
\[
d\alpha_t = (f_t)_*\frac{d}{dt} f_t^* \omega = 0.
\]
The following lemma is due to Akveld and Salamon~\cite[Lemma 2.1]{AS01}.
\begin{lm}\label{lm:tan}
The $1$-form $\alpha_t$ is independent of the choice of lifting $f_t$ of $\Lambda_t.$ So, for $\Gamma \in \L$ there is a canonical isomorphism
\[
T_\Gamma \L \overset{\sim}{\longrightarrow} \{\rho \in A^1(\Gamma)|d\rho = 0\}
\]
sending the equivalence class of a smooth path $\Lambda : (-\epsilon,\epsilon)\to \L$ with $\Lambda_0 = \Gamma$ to $\alpha_0 \in A^1(\Gamma).$
\end{lm}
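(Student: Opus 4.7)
My plan is to attack the two assertions of the lemma in order: first independence of the lifting, then the isomorphism.

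For independence of the lifting, any two smooth liftings $f_t,\tilde f_t$ of the path $\Lambda_t$ are related by $\tilde f_t = f_t\circ\psi_t$ for a smooth family $\psi_t\in\G$. Let $\eta_t$ be the time-dependent vector field on $L$ generating $\psi_t$, i.e.\ $\frac{d\psi_t}{dt}=\eta_t\circ\psi_t$. Differentiating the composition and applying the chain rule yields, at a point $p\in L$,
\[
\tilde v_t(p)=v_t(\psi_t(p))+df_{t,\psi_t(p)}\!\bigl(\eta_t(\psi_t(p))\bigr).
\]
Pulling back $\tilde\alpha_t=(\tilde f_t)_*i_{\tilde v_t}\omega$ by $\tilde f_t$ and pairing with a tangent vector $X\in T_pL$, the contribution of the second summand is
\[
\omega_{f_t(\psi_t(p))}\!\bigl(df_{t}\eta_t,\,df_{t}\,d\psi_{t}(X)\bigr)=(f_t^*\omega)_{\psi_t(p)}\!\bigl(\eta_t(\psi_t(p)),d\psi_t(X)\bigr)=0,
\]
because $f_t^*\omega=0$. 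The first summand manifestly reproduces $(\psi_t^*f_t^*\alpha_t)_p(X)$, so $\tilde f_t^*\tilde\alpha_t=\tilde f_t^*\alpha_t$, and since $\tilde f_t$ is an embedding, $\tilde\alpha_t=\alpha_t$. This is the main conceptual point, and the one at which the Lagrangian condition is used in an essential way.

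The form $\alpha_t$ has already been shown closed in the paragraph preceding the lemma by differentiating $f_t^*\omega\equiv 0$ and invoking Cartan's formula (Remark~\ref{rem:car}). It remains to check that the rule $[\Lambda]\mapsto\alpha_0$ descends to a well-defined linear bijection from $T_\Gamma\L$ to the space of closed $1$-forms on $\Gamma$. Well-definedness on tangent vectors (i.e.\ on equivalence classes of paths tangent to first order at $t=0$) is immediate from the pointwise formula $\alpha_0=(f_0)_*i_{v_0}\omega$, and linearity in $v_0$ is clear.

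For injectivity, if $\alpha_0=0$ then $i_{v_0}\omega$ vanishes when paired with any vector in $df_0(TL)$; since $\Gamma$ is Lagrangian, $\omega$ induces an isomorphism from the normal bundle $N\Gamma$ to $T^*\Gamma$, so the normal component of $v_0$ vanishes and $v_0=df_0(\eta)$ for some $\eta\in\Gamma(TL)$; this is the tangent vector at $t=0$ of the reparametrization path $f_0\circ\psi_t^\eta$, hence represents the zero tangent vector in $T_\Gamma\L$. For surjectivity, given a closed $\rho\in A^1(\Gamma)$, apply the Weinstein Lagrangian neighborhood theorem to identify a neighborhood of $\Gamma\subset X$ with a neighborhood of the zero section in $T^*\Gamma$; then $t\mapsto\Lambda_t:=\mathrm{graph}(t\rho)$ is a smooth Lagrangian path (Lagrangian precisely because $d\rho=0$) through $\Gamma$, and a direct computation in these coordinates shows $\alpha_0=\rho$ (in the non-compact case one multiplies $\rho$ by a compactly supported cut-off equal to $1$ on the support of the variation to stay inside $\X(X,L,d)$). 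The main obstacle is really only the first step; injectivity and surjectivity are standard consequences of the Weinstein neighborhood theorem.
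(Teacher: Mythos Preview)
The paper does not actually prove this lemma: it is quoted from Akveld--Salamon \cite[Lemma~2.1]{AS01} without argument. So there is nothing to compare against on the paper's side.

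Your proof is correct. The independence-of-lifting step is handled cleanly, and you correctly identify that the Lagrangian condition $f_t^*\omega=0$ is what kills the tangential (reparametrization) contribution. Injectivity via the isomorphism $N\Gamma\cong T^*\Gamma$ and surjectivity via the Weinstein neighborhood theorem are the standard arguments and are fine.

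One small remark on the non-compact case: your parenthetical about multiplying $\rho$ by a cut-off is not quite right, since cutting off a closed form generally destroys closedness. The point is rather that in the non-compact setting the target of the isomorphism should be the space of \emph{compactly supported} closed $1$-forms (since all $\Lambda\in\L$ agree with $\Lambda_0$ outside a compact set, any deformation $\alpha_0$ is automatically compactly supported), and for such $\rho$ the graph construction already produces a path in $\L$ without modification. This is a wrinkle in the statement more than in your argument.
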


Lemma~\ref{lm:tan} justifies the notation
\[
\frac{d}{dt} \Lambda_t = \alpha_t.
\]
The following lemma gives a converse to Lemma~\ref{lm:tan}.
\begin{lm}\label{lm:lift}
Let $\Lambda : [0,1] \to \L$ be a smooth path and let $f$ be a lifting of $\Lambda$ to $\X.$ Let $w_t$ be a smooth family of vector fields along $f$ such that $(f_t)_*i_{w_t}\omega = \frac{d}{dt}\Lambda_t.$ Then there exists $\psi : [0,1] \to \G$ such that $F_t = f_t \circ \psi_t$ satisfies $\frac{d}{dt}F_t = w_t.$
\end{lm}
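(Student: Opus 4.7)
The plan is to produce $\psi_t$ as the flow of a time-dependent vector field on $L$ that measures the ``tangential discrepancy'' between $w_t$ and the canonical velocity $v_t = \tfrac{d}{dt} f_t$.

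First, I would note that $v_t := \tfrac{d}{dt}f_t$ is itself a vector field along $f_t$, and by construction (the definition of $\tfrac{d}{dt}\Lambda_t$ preceding Lemma~\ref{lm:tan}) satisfies $(f_t)_* i_{v_t}\omega = \tfrac{d}{dt}\Lambda_t$. Subtracting, the hypothesis on $w_t$ gives
\[
(f_t)_* i_{v_t - w_t}\omega = 0,
\]
i.e.\ for every $p \in L$ and every tangent vector $\xi \in T_pL$,
\[
\omega_{f_t(p)}\bigl(v_t(p) - w_t(p),\, df_t(\xi)\bigr) = 0.
\]
Thus $v_t(p) - w_t(p)$ lies in the $\omega$-orthogonal complement of $df_t(T_pL)$ inside $T_{f_t(p)}X$. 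Since $f_t(L)$ is Lagrangian, this $\omega$-orthogonal equals $df_t(T_pL)$ itself. Consequently, because $f_t$ is an embedding, there is a unique vector $\eta_t(p) \in T_pL$ with
\[
df_t(\eta_t(p)) = v_t(p) - w_t(p).
\]
Smoothness of $\eta_t$ in $(t,p)$ follows from smoothness of $f, v, w$ and the fact that inverting an injective smooth bundle map depends smoothly on parameters. In the non-compact setting, our running assumption that all $f_t$ agree with $f_0$ outside a fixed compact set forces $v_t, w_t$ (and hence $\eta_t$) to vanish outside a compact subset of $L$.

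Next I would define $\psi_t : L \to L$ as the isotopy generated by $-\eta_t$, namely the solution of
\[
\frac{d}{dt}\psi_t = -\eta_t \circ \psi_t, \qquad \psi_0 = \id_L.
\]
Standard ODE theory (together with the compact-support statement above) guarantees that $\psi_t$ exists for all $t \in [0,1]$ as a compactly supported diffeomorphism, and $\psi_t \in \G$ since $\psi_0 = \id$ preserves orientation.

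Finally I would verify the desired identity by direct computation: setting $F_t = f_t \circ \psi_t$ and evaluating at $p \in L$,
\[
\frac{d}{dt}F_t(p) = v_t(\psi_t(p)) + df_t\bigl(\tfrac{d}{dt}\psi_t(p)\bigr) = v_t(\psi_t(p)) - df_t(\eta_t(\psi_t(p))) = w_t(\psi_t(p)),
\]
which is precisely the statement $\tfrac{d}{dt}F_t = w_t$ along $F_t$. The only genuinely non-routine step is the Lagrangian-orthogonality argument that produces $\eta_t$; everything else is bookkeeping and an application of the standard existence theorem for flows of time-dependent vector fields.
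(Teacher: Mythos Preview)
Your proof is correct and follows essentially the same approach as the paper's: define $v_t = \tfrac{d}{dt}f_t$, use the Lagrangian condition to see that $w_t - v_t$ is tangent to $\Lambda_t$, pull this back via $f_t$ to a vector field on $L$, and let $\psi_t$ be its flow. The paper records this in three sentences and omits the verification of $\tfrac{d}{dt}F_t = w_t$ and the smoothness/compact-support remarks, but the argument is identical.
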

\begin{proof}
Set $v_t = \frac{d}{dt}f_t.$ Since $\Lambda_t$ is Lagrangian, it follows that $w_t - v_t$ is tangent to $\Lambda_t.$ So, there exists a unique vector field $u_t$ on $L$ such that $(f_t)_*u_t = w_t - v_t.$ We take $\psi_t$ to be the flow of $u_t.$
\end{proof}
More generally, let $\Lambda: \Theta \to \L$ be a smooth family. Let $\nu \in T_\theta \Theta$ be represented by a smooth path $\sigma : (-\epsilon,\epsilon) \to \Theta$ with $\sigma(0) = \theta.$ We define the derivative of $\Lambda$ in the direction $\nu$ by
\[
d\Lambda_\theta(\nu) = \left. \frac{d}{dt} \Lambda_{\sigma(t)}\right|_{t = 0} \in A^1(\Lambda_\theta).
\]
\begin{df}
We say the family $\Lambda$ is \emph{exact} if for all $\theta \in \Theta$ and $\nu \in T_\theta\Theta$ the derivative $d\Lambda_\theta(\nu)$ is exact. That is, there exists $h : \Lambda_\theta \to \R$ such that
\begin{equation}\label{eq:h}
d\Lambda_\theta(\nu) = dh.
\end{equation}
An exact family $\Lambda$ is \emph{compactly supported} if for all $\theta$ and $\nu$ the function $h$ in equation~\eqref{eq:h} can be chosen to have compact support.
\end{df}
The following Lemma is the same as~\cite[Lemma 2.2]{AS01}.
\begin{lm}\label{lm:un}
Let $\Lambda : [a,b] \to \L$ be a smooth path. Let $\phi : [a,b] \to \ham(X,\omega)$ be a smooth path with $\phi_0 = \id_X,$ generated by the time dependent Hamiltonian $H_t.$ Then $\phi_t(\Lambda_0) = \Lambda_t$ if and only if
\[
\frac{d}{dt} \Lambda_t = dH_t|_{\Lambda_t}
\]
for all $t.$
\end{lm}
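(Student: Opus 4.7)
The plan is to prove the two implications separately, making use of Lemma~\ref{lm:tan} (to compute $\frac{d}{dt}\Lambda_t$ from a lifting) and Lemma~\ref{lm:lift} (to produce a lifting with prescribed velocity).

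For the forward implication, I would take the canonical lifting $f_t = \phi_t \circ f_0$ of $\Lambda_t$, where $f_0$ is any lifting of $\Lambda_0$. Since $\phi_t$ is the flow of the Hamiltonian vector field $\xi_t$ corresponding to $H_t$, the velocity vector field along $f_t$ is $v_t = \frac{d}{dt}f_t = \xi_t \circ f_t$. Then, using $i_{\xi_t}\omega = dH_t$, a one-line computation with the interior-product convention of Section~\ref{ssec:tn} gives
\[
i_{v_t}\omega = f_t^*(i_{\xi_t}\omega) = f_t^* dH_t = d(f_t^* H_t).
\]
Pushing forward by $f_t$ yields $(f_t)_* i_{v_t}\omega = dH_t|_{\Lambda_t}$, which by Lemma~\ref{lm:tan} is precisely $\frac{d}{dt}\Lambda_t$.

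For the reverse implication, I would start with any lifting $f_t$ of $\Lambda_t$ and take $w_t = \xi_t \circ f_t$ as a vector field along $f_t$. The same calculation as above shows $(f_t)_* i_{w_t}\omega = dH_t|_{\Lambda_t}$, which equals $\frac{d}{dt}\Lambda_t$ by hypothesis. Lemma~\ref{lm:lift} then produces a family of diffeomorphisms $\psi_t$ such that the adjusted lifting $F_t = f_t \circ \psi_t$ satisfies $\frac{d}{dt}F_t = \xi_t \circ F_t$. In other words, $F_t$ is an integral curve of $\xi_t$ starting at $F_0$, so by uniqueness of solutions of time-dependent ODEs, $F_t = \phi_t \circ F_0$. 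Passing to images gives $\Lambda_t = F_t(L) = \phi_t(F_0(L)) = \phi_t(\Lambda_0)$.

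The main obstacle is bookkeeping: one has to be careful that the vector field $w_t$ in the application of Lemma~\ref{lm:lift} is genuinely a smooth family of vector fields along $f_t$ (which it is, since $\xi_t$ is a smooth vector field on $X$), and that the arrangement with $\psi_0 = \id_L$ makes $F_0 = f_0$ so the initial condition $\Lambda_0 = F_0(L)$ matches. Everything else is a direct translation between the tangent-space identification of Lemma~\ref{lm:tan} and the definition of a Hamiltonian flow, so I do not expect any genuine analytic difficulty.
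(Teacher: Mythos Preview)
Your argument is correct. The paper does not supply its own proof of this lemma; it simply attributes the statement to Akveld--Salamon~\cite[Lemma~2.2]{AS01}. Your proof is the natural one and is essentially the argument one finds there: use the canonical lifting $f_t = \phi_t \circ f_0$ for the forward direction, and for the converse correct an arbitrary lifting by a tangential reparametrization (your invocation of Lemma~\ref{lm:lift}) so that it becomes a flow line of the Hamiltonian vector field, then appeal to uniqueness for ODEs.

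One small remark on bookkeeping: in the converse you note that you need $\psi_0 = \id_L$ so that $F_0 = f_0$. In the proof of Lemma~\ref{lm:lift}, $\psi_t$ is taken to be the flow of a time-dependent vector field on $L$, so $\psi_0 = \id_L$ is built in. Even without that, $\psi_0 \in \G$ already guarantees $F_0(L) = f_0(L) = \Lambda_0$, which is all that is needed for the conclusion $\phi_t(\Lambda_0) = \Lambda_t$. So there is no genuine obstacle here.
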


Let $(\Theta,\theta_0)$ be a pointed contractible manifold. Let $\kappa : [0,1] \times \Theta \to \Theta$ be a piecewise smooth homotopy between $\id_\Theta$ and the constant map to $\theta_0.$ That is, $\kappa$ is continuous and there exist $0 = t_0 \leq t_1 \leq \cdots \leq t_k=1$ such that $\kappa|_{[t_i,t_{i+1}]\times\Theta}$ is smooth for $i = 0,\ldots,k-1.$ Write $\kappa_t = \kappa|_{\{t\} \times \Theta}.$ Let $v_t = \frac{d}{dt}\kappa_t.$ Let $\Lambda : \Theta \to \L$ be a smooth family. We say that $\Lambda$ is \emph{$\kappa$-exact} if the path $t \mapsto \Lambda\circ \kappa_t(\theta)$ is exact for all $\theta.$ We say that $\Lambda$ is \emph{exactly contractible} if there exists $\kappa$ such that $\Lambda$ is $\kappa$-exact. The following lemma is a slight generalization of~\cite[Lemma 2.3]{AS01}.
\begin{lm}\label{lm:elfh}
Let $(\Theta,\theta_0)$ be a pointed contractible manifold with corners. Let $\Lambda : \Theta \to \L$ be smooth and compactly supported and let $f : \Theta \to \X$ be a lifting. The following are equivalent:
\begin{enumerate}
\item\label{it:e}
$\Lambda$ is exact.
\item\label{it:ec}
$\Lambda$ is exactly contractible.
\item\label{it:h}
There exists a smooth family $\phi : \Theta \to \ham(X,\omega)$ such that $\phi_\theta(\Lambda_{\theta_0}) = \Lambda_\theta$ for all $\theta \in \Theta$ and $\phi_{\theta_0} = \id_X.$
\item\label{it:f}
There exists a smooth family $\phi : \Theta \to \ham(X,\omega)$ such that $\phi_\theta \circ f_{\theta_0} = f_\theta.$
\end{enumerate}
\end{lm}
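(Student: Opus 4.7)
The plan is to prove the equivalence via the cycle (d) $\Rightarrow$ (c) $\Rightarrow$ (a) $\Rightarrow$ (b) $\Rightarrow$ (d). The first three implications are largely formal, while the core of the argument is (b) $\Rightarrow$ (d).

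First, (d) $\Rightarrow$ (c) is obtained by taking images. For (c) $\Rightarrow$ (a): given $\nu \in T_\theta\Theta$ represented by a smooth curve $\sigma$ with $\sigma(0)=\theta$, I would form the Hamiltonian isotopy $\tilde\phi_t = \phi_{\sigma(t)}\circ\phi_\theta^{-1}$, observe that $\tilde\phi_0 = \id_X$ and $\tilde\phi_t(\Lambda_\theta) = \Lambda_{\sigma(t)}$, and apply Lemma~\ref{lm:un} to conclude that $d\Lambda_\theta(\nu) = \tfrac{d}{dt}\bigl|_{t=0}\Lambda_{\sigma(t)}$ is exact with appropriately supported primitive (compact support is inherited from that of the Hamiltonian generating $\tilde\phi_t$). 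For (a) $\Rightarrow$ (b), the contractibility of $\Theta$ furnishes a smooth contraction $\kappa$ of $\id_\Theta$ to $\theta_0$, arranged so that $\kappa_t(\theta_0)\equiv\theta_0$; then
\[
\tfrac{d}{dt}\Lambda\circ\kappa_t(\theta) = d\Lambda_{\kappa_t(\theta)}\bigl(\tfrac{d}{dt}\kappa_t(\theta)\bigr)
\]
is exact by (a), so $\Lambda$ is $\kappa$-exact.

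The heart of the proof is (b) $\Rightarrow$ (d). Setting $\mu_t = \kappa_{1-t}$, I obtain for each $\theta$ a piecewise smooth path $t\mapsto\mu_t(\theta)$ from $\theta_0$ to $\theta$, whose lift $f^{(\theta)}_t := f_{\mu_t(\theta)}$ has velocity $v^{(\theta)}_t$ satisfying $(f^{(\theta)}_t)_* i_{v^{(\theta)}_t}\omega = dh_{t,\theta}$ for primitives $h_{t,\theta}$ chosen smoothly in $(t,\theta)$. The crucial step is to extend $h_{t,\theta}$ to a Hamiltonian $H_{t,\theta}$ on $X$ whose Hamiltonian vector field $\xi_{t,\theta}$ satisfies the \emph{pointwise} equality $\xi_{t,\theta}\circ f^{(\theta)}_t = v^{(\theta)}_t$, not merely equality modulo vectors tangent to $\Lambda_{\mu_t(\theta)}$. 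Any extension of $h_{t,\theta}$ already produces the correct normal component along $\Lambda_{\mu_t(\theta)}$, because $\omega$ identifies $T_pX/T_p\Lambda$ with $T_p^*\Lambda$ at points of a Lagrangian; and any desired tangential correction can be produced by adding to $H_{t,\theta}$ a function that vanishes on $\Lambda_{\mu_t(\theta)}$, since a short calculation in a Darboux--Weinstein chart shows that the map $\tilde H\mapsto X_{\tilde H}|_\Lambda$ surjects onto vector fields tangent to $\Lambda$ as $\tilde H$ ranges over functions vanishing on $\Lambda$. Granted this matching, uniqueness of ODE solutions gives $\phi^\theta_t\circ f_{\theta_0} = f^{(\theta)}_t$ for the Hamiltonian flow $\phi^\theta_t$ of $H_{t,\theta}$, and $\phi_\theta := \phi^\theta_1$ is the desired family; the normalization $\kappa_t(\theta_0)\equiv\theta_0$ forces $v^{(\theta_0)}_t = 0$ and hence $\phi_{\theta_0} = \id_X$.

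The main obstacle I foresee is carrying out the extension $(t,\theta)\mapsto H_{t,\theta}$ smoothly, using smoothly varying Weinstein tubular neighborhoods of $\Lambda_{\mu_t(\theta)}$ together with a partition of unity to globalize the local Darboux picture, while maintaining compact support of $H_{t,\theta}$ in the non-compact case. The piecewise smoothness of $\kappa$ is only a cosmetic nuisance: one concatenates the Hamiltonian flows across the breakpoints and observes that the endpoint $\phi_\theta$ still lies in $\ham(X,\omega)$ and depends smoothly on $\theta$.
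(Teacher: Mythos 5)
Your proposal is correct, but it takes a genuinely different route for the crucial implication. The paper splits the work in two: first it proves (b) $\Rightarrow$ (c) by choosing primitives $h_{\theta,t}$ along the contraction, extending them to ambient compactly supported Hamiltonians by an \emph{arbitrary} (0-jet) extension, and invoking Lemma~\ref{lm:un}, which only sees the submanifolds $\Lambda_{\kappa_t(\theta)}$ and not the parametrization; then it proves (c) $\Rightarrow$ (d) a posteriori, by forming the discrepancy diffeotopy $\psi_\theta = \phi_\theta^{-1}\circ f_\theta \circ f_{\theta_0}^{-1}$ of $\Lambda_{\theta_0}$, extending it to a family in $\ham(T^*\Lambda_{\theta_0},\omega_{\text{can}})$ and transplanting it into $X$ via Weinstein's Lagrangian neighborhood theorem, finally replacing $\phi_\theta$ by $\phi_\theta\circ\tilde\psi_\theta$. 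You instead merge these two steps into a single construction (b) $\Rightarrow$ (d): you prescribe the full $1$-jet of $H_{t,\theta}$ along $\Lambda_{\mu_t(\theta)}$ (equivalently $dH_{t,\theta} = i_{v_t^{(\theta)}}\omega$ at points of the submanifold) so that the Hamiltonian flow reproduces the given lifting $f$ exactly, by uniqueness of ODE solutions. Your key observation --- that the normal component of $X_H|_\Lambda$ is fixed by $h$ while the tangential component can be adjusted arbitrarily by adding functions vanishing on $\Lambda$ (linear in the fibers of a Weinstein chart) --- is correct, and the consistency of the prescribed $1$-jet makes the partition-of-unity globalization and the parametrized extension unproblematic, at the same level of standard technology as the paper's unproved extension claims; so what you buy is avoiding the cotangent-lift/Weinstein correction step at the cost of a slightly stronger ($1$-jet rather than $0$-jet) extension lemma. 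Two small caveats: condition (d) does not itself demand $\phi_{\theta_0}=\id_X$, and your argument that $v^{(\theta_0)}_t=0$ ``forces'' $\phi_{\theta_0}=\id_X$ is not automatic --- vanishing of $X_{H}$ along $\Lambda_{\theta_0}$ only makes the flow fix $\Lambda_{\theta_0}$ pointwise --- but this is harmless: either choose a linear extension operator so that zero $1$-jet data yields $H_{t,\theta_0}\equiv 0$, or, for (d) $\Rightarrow$ (c), simply replace $\phi_\theta$ by $\phi_\theta\circ\phi_{\theta_0}^{-1}$. Likewise, your requirement of a contraction with $\kappa_t(\theta_0)\equiv\theta_0$ (and your smooth-versus-piecewise-smooth remark) is an inessential normalization that can be arranged, or dispensed with as above.
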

\begin{proof}
The implication \ref{it:e} $\Rightarrow$ \ref{it:ec} is immediate.
We prove \ref{it:ec} $\Rightarrow$ \ref{it:h} as follows. Let
\[
\kappa :[0,1]\times \Theta \to \Theta
\]
be a piecewise smooth homotopy between $\id_\Theta$ and the constant map to $\theta_0$ such that $\Lambda$ is $\kappa$-exact. Let $w_t = \frac{d}{dt}
\kappa_t.$ Let $f$ be a lifting of $\Lambda$ to $\X.$ Let $h_{\theta,t}$ for $t \in [0,1]$ and $\theta \in \Theta$ be a smooth family of compactly supported functions on $L$ such that
\[
dh_{\theta,t} = f_{\kappa_t(\theta)}^* \left(d\Lambda_{\kappa_t(\theta)}(w_t)\right).
\]
The map
\[
[0,1]\times \Theta \times L \longrightarrow [0,1]\times \Theta \times X
\]
given by $(t,\theta,p) \mapsto (t,\theta,f_{\kappa_t(\theta)}(p))$ is clearly an injective immersion. Recall that a smooth compactly supported function on a submanifold can be extended to a smooth compactly supported function on the ambient manifold. Thus, we construct a piecewise smooth family of compactly supported functions $H_{\theta,t} : X \to \R$ such that $H_{\theta,t} \circ f_{\kappa_t(\theta)} = h_{\theta,t}$. Take $\phi_{\theta,t}$ to be the Hamiltonian flow of the time dependent Hamiltonian $H_{\theta,t}.$ By Lemma~\ref{lm:un}, we have
\[
\phi_{\theta,t}(\Lambda_{\theta_0}) = \Lambda_{\kappa_t(\theta)}.
\]
So, we may take $\phi_{\theta} = \phi_{\theta,1}.$

Next, we prove \ref{it:h} $\Rightarrow$ \ref{it:e}. Let $\nu \in T_\theta\Theta$ be represented by a smooth path $\sigma : (-\epsilon,\epsilon) \to \Theta$ with $\sigma(0) = \theta.$ Let $\phi_{t} = \phi_{\sigma(t)}.$ A standard argument using the flux homomorphism shows there exists a time dependent Hamiltonian $H_t : X \to \R$ generating the path $t \mapsto \phi_t.$ See \cite[Prop. 10.17]{MS98}. Let $\Lambda_t = \Lambda_{\sigma(t)}.$ By assumption $\phi_t(\Lambda_0) = \Lambda_t.$ So, using Lemma~\ref{lm:un}, we have
\begin{equation*}
d\Lambda_\theta(\nu) = \left.\frac{d}{dt} \Lambda_{t}\right|_{t = 0}  = dH_0|_{\Lambda_\theta}.
\end{equation*}
Since $\theta$ and $\nu$ were arbitrary, the desired implication follows.

The implication~\ref{it:f} $\Rightarrow$ \ref{it:h} is immediate, so it remains to prove the converse. Indeed, let $\phi_{\theta}$ be as in~\ref{it:h}. Then
\[
\psi_\theta = \phi_\theta^{-1} \circ f_\theta \circ f_{\theta_0}^{-1} : \Lambda_{\theta_0} \longrightarrow \Lambda_{\theta_0}
\]
is a family of diffeomorphisms with $\psi_{\theta_0} = \id_{\Lambda_{\theta_0}}.$ It is well-known that $\psi_{\theta}$ can be extended to a family $\tilde \psi_\theta \in \ham(T^*\Lambda_{\theta_0},\omega_{\text{can}}).$ See~\cite[Exercise 3.12]{MS98}. Weinstein's Lagrangian neighborhood theorem~\cite[Theorem~3.33]{MS98} implies that we can regard $\tilde \psi_\theta$ as a smooth family in $\ham(X,\omega).$ But then
\[
\phi_\theta \circ \tilde\psi_\theta\circ f_{\theta_0} = f_\theta.
\]
Replacing $\phi_\theta$ with $\phi_\theta \circ \tilde \psi_\theta,$ we obtain~\ref{it:f}.
\end{proof}

\begin{cy}\label{cy:help}
Suppose $\Lambda = \{\Lambda_{s,}\}_{s \in [0,1]}$ is a homotopy of compactly supported exact Lagrangian paths $\Lambda_{s,} = \{\Lambda_{s,t}\}_{t \in [0,1]}$ with $\Lambda_{s,0} = \Lambda_0$ fixed. Then $\Lambda$ considered as a family $[0,1]^2 \to \L$ is exact.
\end{cy}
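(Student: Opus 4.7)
The plan is to apply Lemma~\ref{lm:elfh}, in particular the implication that exactly contractible families are exact, so as to reduce the two-parameter statement to the one-parameter exactness already provided by the hypothesis. Take as basepoint $\theta_0 = (0,0) \in [0,1]^2 = \Theta$; the domain $\Theta$ is a contractible manifold with corners, as required by the lemma.

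Next, I would construct an explicit piecewise smooth contraction $\kappa : [0,1] \times [0,1]^2 \to [0,1]^2$ that collapses the $t$-coordinate first and only then the $s$-coordinate:
$$\kappa_u(s,t) = \begin{cases} (s,(1-2u)t), & u \in [0, 1/2], \\ (2(1-u)s, 0), & u \in [1/2, 1]. \end{cases}$$
This is continuous at $u = 1/2$, smooth on each piece, and satisfies $\kappa_0 = \id_\Theta$ and $\kappa_1 \equiv (0,0)$, so it qualifies as a piecewise smooth contraction in the sense of Section~\ref{sec:b}.

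It then remains to verify $\kappa$-exactness, i.e.\ that for each fixed $(s,t)$ the path $u \mapsto \Lambda_{\kappa_u(s,t)}$ is exact and compactly supported. On the first piece this path is an affine, orientation-reversing reparametrization of $r \mapsto \Lambda_{s,r}$ for $r \in [0,t]$, whose derivative in $u$ is a scalar multiple of $\tfrac{d}{dr}\Lambda_{s,r}$; since $\Lambda_{s,\cdot}$ is exact and compactly supported by hypothesis, so is the reparametrization, with Hamiltonians obtained by rescaling the given ones. On the second piece the path is constant at $\Lambda_{s,0} = \Lambda_0$, so its derivative vanishes identically and we may take the associated Hamiltonian $h \equiv 0$. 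Therefore $\Lambda$ is $\kappa$-exact, hence exactly contractible, hence exact on $[0,1]^2$ by Lemma~\ref{lm:elfh}.

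The only mildly subtle point is the choice of contraction: a naive straight-line contraction $\kappa_u(s,t) = ((1-u)s,(1-u)t)$ would fail, since the slice paths then move diagonally and the hypothesis says nothing about the partial derivative $\tfrac{\partial}{\partial s}\Lambda_{s,t}$ at interior $t$. The two-phase piecewise smooth contraction above avoids this precisely by exploiting the boundary condition $\Lambda_{s,0} = \Lambda_0$, which makes the $s$-phase trivially exact and reduces the remaining content of the corollary to the one-parameter exactness of the paths $\Lambda_{s,\cdot}$.
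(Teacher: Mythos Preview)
Your proof is correct and takes essentially the same approach as the paper: construct a two-phase piecewise smooth contraction of $[0,1]^2$ that moves along the $t$-direction (where each $\Lambda_{s,\cdot}$ is exact by hypothesis) and along the $s$-direction at $t=0$ (where the family is constant by the fixed-endpoint condition), then invoke Lemma~\ref{lm:elfh}. The paper's $\kappa$ is exactly yours with the homotopy parameter reversed, which is immaterial since $\kappa$-exactness is invariant under reparametrization.
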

\begin{proof}
Let $\kappa_r : [0,1]^2  \to [0,1]^2$ be given by
\[
\kappa_r(s,t) =
\begin{cases}
(2rs,0), & 0 \leq r \leq 1/2, \\
(s,2(r-1/2)t), & 1/2 < r \leq 1.
\end{cases}
\]
Then $\Lambda$ is $\kappa$-exact, so the corollary follows from Lemma~\ref{lm:elfh}.
\end{proof}

\section{Proofs of the main theorems}\label{sec:p}
\subsection{The general case}
\begin{lm}\label{lm:ob}
Let $\Lambda \subset X$ be Lagrangian. Let $H,K : X \to \R$ with $H|_\Lambda,K|_\Lambda,$ compactly supported, and let $\xi,\zeta,$ denote the corresponding Hamiltonian vector fields. Let $\beta$ be an $n$-form on $X$ such that $\omega \wedge \beta = 0.$ Then
\[
\int_\Lambda \{H,K\}\beta = \int_\Lambda \left(H di_{\zeta}\beta - K di_{\xi} \beta\right).
\]
\end{lm}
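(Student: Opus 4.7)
The plan is to reduce the integral identity to a pointwise identity of $n$-forms along $\Lambda$, using integration by parts on one side and the hypothesis $\omega\wedge\beta=0$ together with $\omega|_\Lambda=0$ on the other.

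First, since $H|_\Lambda$ and $K|_\Lambda$ have compact support, Stokes' theorem on $\Lambda$ applied to the exact $n$-forms $d(H\,i_\zeta\beta)$ and $d(K\,i_\xi\beta)$ gives
\[
\int_\Lambda H\,di_\zeta\beta = -\int_\Lambda dH\wedge i_\zeta\beta, \qquad \int_\Lambda K\,di_\xi\beta = -\int_\Lambda dK\wedge i_\xi\beta.
\]
So it suffices to show the pointwise identity $\{H,K\}\beta = dK\wedge i_\xi\beta - dH\wedge i_\zeta\beta$ holds after pullback to $\Lambda$.

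To obtain this, I would differentiate $\omega\wedge\beta=0$ by contraction. Applying $i_\zeta$ and using the graded Leibniz rule for interior products together with $i_\zeta\omega=dK$ yields
\[
0 = dK\wedge\beta + \omega\wedge i_\zeta\beta \qquad \text{on } X.
\]
Applying $i_\xi$ once more, using $i_\xi\omega=dH$ and the convention $\{H,K\}=\xi K$, gives the identity
\[
0 = \{H,K\}\,\beta - dK\wedge i_\xi\beta + dH\wedge i_\zeta\beta + \omega\wedge i_\xi i_\zeta\beta \qquad \text{on } X.
\]
Now pull back along the inclusion $\iota:\Lambda\hookrightarrow X$. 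The last term disappears because $\iota^*\omega=0$, and the desired pointwise identity on $\Lambda$ follows. Combining with the integration by parts above finishes the proof.

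The only subtlety is keeping sign conventions consistent: one must check that the sign coming from $i_\xi(dK\wedge\beta) = (\xi K)\beta - dK\wedge i_\xi\beta$ (since $dK$ is a $1$-form) matches the sign from $i_\xi(\omega\wedge i_\zeta\beta) = dH\wedge i_\zeta\beta + \omega\wedge i_\xi i_\zeta\beta$ (since $\omega$ is a $2$-form), and that the sign convention $\{H,K\}=\xi K$ stated in Section~\ref{ssec:tn} is what appears. There is no real obstacle beyond this bookkeeping; the conceptual content is entirely contained in the two observations that $\omega\wedge\beta=0$ globally and $\omega|_\Lambda=0$ on the Lagrangian.
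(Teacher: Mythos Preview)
Your proof is correct and essentially identical to the paper's own proof: the paper likewise computes $i_\xi i_\zeta(\omega\wedge\beta)=0$, expands via the graded Leibniz rule to obtain the same four-term identity, kills the $\omega\wedge i_\xi i_\zeta\beta$ term by restricting to the Lagrangian, and then integrates by parts. The only cosmetic difference is that you perform the integration by parts first and the contraction second, while the paper does the reverse.
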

\begin{proof}
We have
\begin{align*}
0 &= i_{\xi}i_{\zeta}(\omega \wedge \beta) \\
&= \{H,K\} \beta - dK \wedge i_{\xi} \beta + dH\wedge i_{\zeta}\beta + \omega \wedge i_{\xi}i_{\zeta} \beta.
\end{align*}
Using the assumption that $\Lambda$ is Lagrangian, we have
\[
\{H,K\} \beta|_\Lambda = -\left(dH \wedge i_{\zeta} \beta - dK \wedge i_{\xi} \beta\right)|_\Lambda.
\]
The lemma follows by integrating by parts.
\end{proof}

Let $\OO \subset \L$ be an exact compactly supported isotopy class, or equivalently, an orbit of $\ham(X,\omega).$ For $\Lambda \in \OO$, the tangent space $T_\Lambda \OO$ is the space of exact $1$-forms on $\Lambda$ with compactly supported primitive. Let $\Upsilon$ be the $1$-form on $\OO$ given by the linear functional $dh \mapsto \int_{\Lambda} h \beta$ on the tangent space $T_\Lambda \OO.$ It is convenient to think of the functional $\CC$ as the integral along a path in $\OO$ of the $1$-form $\Upsilon.$ Thus to prove Theorem~\ref{tm:main}, it suffices to show $\Upsilon$ is closed. To this end, we compute the exterior derivative of its pull-back by an arbitrary two-parameter family.

In the following, take $\Theta = [0,1]^2$ and let $\Lambda : \Theta \to \L(X,L,d)$ be compactly supported and exact. Let $\beta$ be a closed $n$-form with $\omega \wedge \beta = 0$ and, if $L$ is compact, $\int_d \beta = 0.$ For $\theta \in \Theta$ and $\nu \in T_\theta\Theta,$ let $h_{\theta,\nu}$ be a compactly supported function on $\Lambda_{\theta}$ such that $d_\theta\Lambda(\nu) = dh_{\theta,\nu}.$ Let $\eta$ be the $1$-form on $\Theta$ given by
\[
\eta_\theta(\nu) = \int_{\Lambda_\theta} h_{\theta,\nu} \beta.
\]
If $L$ is non-compact, then $h_{\nu,\theta}$ is unique, so $\eta$ is well-defined. If $L$ is compact, then $h_{\theta,\nu}$ is unique up to a constant, so $\eta$ is well defined because of the assumption $\int_d \beta = 0.$

\begin{lm}\label{lm:closed}
The $1$-form $\eta$ is closed.
\end{lm}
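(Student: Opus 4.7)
The plan is to reduce closedness of $\eta$ on $\Theta=[0,1]^2$ to an identity between the mixed partials, then realize those partials via associated two-parameter families of Hamiltonian functions and invoke Lemmas~\ref{lm:ob} and~\ref{lm:Hban}.

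First I would lift. Since $\Lambda$ is exact and $\Theta$ is contractible, Lemma~\ref{lm:elfh} produces a smooth family $\phi:\Theta\to\ham(X,\omega)$ with $\phi_\theta(\Lambda_{\theta_0})=\Lambda_\theta$, and (by the construction there, or by Remark~\ref{rem:eah}) an associated pair of two-parameter families of Hamiltonian functions $H_{s,t},K_{s,t}$ on $X$, compactly supported if $X$ is non-compact and normalized as in~\eqref{eq:norm} if $X$ is compact. Let $\xi_{s,t},\zeta_{s,t}$ be the corresponding Hamiltonian vector fields. By Lemma~\ref{lm:un}, $H_{s,t}|_{\Lambda_{s,t}}$ and $K_{s,t}|_{\Lambda_{s,t}}$ are primitives of $d\Lambda(\partial_t)$ and $d\Lambda(\partial_s)$ respectively. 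Writing $\eta=A\,ds+B\,dt$, we have
\[
A(s,t)=\int_{\Lambda_{s,t}}K_{s,t}\,\beta,\qquad B(s,t)=\int_{\Lambda_{s,t}}H_{s,t}\,\beta,
\]
well-defined because in the compact-$L$ case the hypothesis $\int_d\beta=0$ kills the constant ambiguity in $H,K$. It suffices to show $\partial_t A=\partial_s B$.

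Next I would differentiate under the integral using the version of Cartan's formula in Remark~\ref{rem:car}, applied to a lifting $f_{s,t}$ of $\Lambda_{s,t}$ with $\partial_t f_{s,t}=\xi_{s,t}\circ f_{s,t}$. Since $d\beta=0$, we get $d(K_{s,t}\beta)=dK_{s,t}\wedge\beta$ and
\[
i_{\xi_{s,t}}(dK_{s,t}\wedge\beta)=\{H_{s,t},K_{s,t}\}\beta-dK_{s,t}\wedge i_{\xi_{s,t}}\beta.
\]
The exact term $di_{\xi_{s,t}}(K_{s,t}\beta)$ on $L$ integrates to zero by Stokes (compact support on $L$ throughout), so
\[
\partial_t A=\int_{\Lambda_{s,t}}\bigl[\partial_t K_{s,t}\cdot\beta+\{H_{s,t},K_{s,t}\}\beta-dK_{s,t}\wedge i_{\xi_{s,t}}\beta\bigr],
\]
and symmetrically, using $\{K,H\}=-\{H,K\}$,
\[
\partial_s B=\int_{\Lambda_{s,t}}\bigl[\partial_s H_{s,t}\cdot\beta-\{H_{s,t},K_{s,t}\}\beta-dH_{s,t}\wedge i_{\zeta_{s,t}}\beta\bigr].
\]

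Finally I would subtract and combine the two "structural" identities. By Lemma~\ref{lm:Hban} (the hypotheses of which are arranged above), $\partial_s H_{s,t}-\partial_t K_{s,t}=\{H_{s,t},K_{s,t}\}$, which cancels against the Poisson-bracket terms above and leaves
\[
\partial_t A-\partial_s B=\int_{\Lambda_{s,t}}\bigl[\{H_{s,t},K_{s,t}\}\beta-dK_{s,t}\wedge i_{\xi_{s,t}}\beta+dH_{s,t}\wedge i_{\zeta_{s,t}}\beta\bigr].
\]
But the identity appearing in the proof of Lemma~\ref{lm:ob}, namely
\[
i_{\xi}i_{\zeta}(\omega\wedge\beta)=\{H,K\}\beta-dK\wedge i_{\xi}\beta+dH\wedge i_{\zeta}\beta+\omega\wedge i_{\xi}i_{\zeta}\beta,
\]
vanishes identically (since $\omega\wedge\beta=0$), and its restriction to the Lagrangian $\Lambda_{s,t}$ drops the last term (since $\omega|_{\Lambda_{s,t}}=0$). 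Hence the integrand above is pointwise zero on $\Lambda_{s,t}$, giving $\partial_t A=\partial_s B$ and thus $d\eta=0$.

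The main point where care is needed is the reduction to the normalized form of Lemma~\ref{lm:Hban}: one must check that the Hamiltonians produced by Lemma~\ref{lm:elfh} can be chosen compactly supported in the non-compact case, and that in the compact case the normalization constants can be absorbed without affecting $A$ and $B$ (using $\int_d\beta=0$). Beyond that, the proof is a bookkeeping exercise matching Cartan's formula against the two pointwise identities.
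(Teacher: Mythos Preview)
Your proof is correct and follows essentially the same route as the paper's: lift via Lemma~\ref{lm:elfh}, differentiate the pulled-back integrands with Cartan's formula, and cancel using Lemmas~\ref{lm:Hban} and~\ref{lm:ob}. The only cosmetic difference is that you invoke the pointwise identity from the proof of Lemma~\ref{lm:ob} (so your integrand vanishes on $\Lambda_{s,t}$), whereas the paper writes the Lie-derivative terms as $H\,di_{\zeta}\beta$ and $K\,di_{\xi}\beta$ and then quotes the integrated statement of Lemma~\ref{lm:ob}; these differ by an integration by parts and are otherwise identical.
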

\begin{proof}
By Lemma~\ref{lm:elfh}, we choose $\phi = \{\phi_{s,t}\}_{s,t \in [0,1]}$ a family of Hamiltonian symplectomorphisms of $X$ such that $\phi_{s,t}(\Lambda_0) = \Lambda_{s,t}.$ Let $\xi_{s,t},\zeta_{s,t},$ be the associated vector fields. If $X$ is non-compact, let $H_{s,t}$ and $K_{s,t}$ be the associated Hamiltonian functions with compact support. If $X$ is compact, let $H_{s,t}$ and $K_{s,t}$ be the associated Hamiltonian functions satisfying normalization~\eqref{eq:norm}. Such $H_{s,t},K_{s,t},$ exist by Remark~\ref{rem:eah}.

By Lemma~\ref{lm:un} we have
\[
dH_{s,t}|_{\Lambda_{s,t}} = dh_{(s,t),{\partial_t}}, \qquad dK_{s,t}|_{\Lambda_{s,t}} = dk_{(s,t),\partial_s}.
\]
Therefore,
\begin{equation*}
\eta = \left(\int_{\Lambda_0}\phi_{s,t}^*(H_{s,t}\beta)\right)dt + \left(\int_{\Lambda_0} \phi_{s,t}^*(K_{s,t}\beta)\right)ds.
\end{equation*}
Let $\eta_t,\eta_s,$ be the coefficients of $dt,ds,$ in the preceding expression so that $\eta = \eta_t dt + \eta_s ds.$ Using the assumption that $\beta$ is closed, we have
\begin{align}
\frac{\partial \eta_t}{\partial s}
&= \int_{\Lambda_0}\phi_{s,t}^*\left[\left(\frac{\partial H_{s,t}}{\partial s} + \{K_{s,t},H_{s,t}\}\right) \beta + H_{s,t}di_{\zeta_{s,t}}\beta\right], \label{eq:etas}\\
\frac{\partial \eta_s}{\partial t}
&= \int_{\Lambda_0}\phi_{s,t}^*\left[\left(\frac{\partial K_{s,t}}{\partial t} + \{H_{s,t},K_{s,t}\}\right)\beta + K_{s,t} di_{\xi_{s,t}}\beta\right].\label{eq:etat}
\end{align}
Subtracting equation~\eqref{eq:etat} from equation~\eqref{eq:etas} and using Lemmas~\ref{lm:Hban} and~\ref{lm:ob}, we deduce that $d\eta = 0.$
\end{proof}
The following proposition gives the first variational formula for $\CC$, and as we see below, immediately implies Theorem~\ref{tm:main}.
\begin{pr}\label{pr:fv}
Let $\{\Lambda_{s,}\}_{s\in [0,1]}$ be a smooth family of exact compactly supported Lagrangian paths $\Lambda_{s,} = \{\Lambda_{s,t}\}_{t \in [0,1]}$ with $\Lambda_{s,0} = \Lambda_0$ fixed. Let $k_s:\Lambda_{s,1} \to \R$ be a family of functions with compact support such that $\frac{d}{ds} \Lambda_{s,1} = dk_{s}.$ Then
\[
\frac{d}{ds} \CC(\Lambda_{s,}) = \int_{\Lambda_{s,1}}k_s\beta.
\]
\end{pr}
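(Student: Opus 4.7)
The plan is to interpret $\CC(\Lambda_{s,})$ as the integral over $t$ of the coefficient of $dt$ in a closed $1$-form on the square $[0,1]^2$, then differentiate in $s$ and invoke Stokes' theorem in the $t$-direction. This reduces Proposition~\ref{pr:fv} to facts already established in the section.

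First I would regard the homotopy as a smooth two-parameter family $\Lambda:[0,1]^2 \to \L$ via $(s,t)\mapsto \Lambda_{s,t}$. By Corollary~\ref{cy:help}, $\Lambda$ is exact and compactly supported, so Lemma~\ref{lm:closed} applies and produces a closed $1$-form $\eta=\eta_t\,dt + \eta_s\,ds$ on $[0,1]^2$. In the notation of the proof of Lemma~\ref{lm:closed}, the coefficients are
\[
\eta_t(s,t) = \int_{\Lambda_{s,t}} H_{s,t}\,\beta, \qquad \eta_s(s,t) = \int_{\Lambda_{s,t}} K_{s,t}\,\beta,
\]
where $H_{s,t},K_{s,t}$ are Hamiltonians for the $t$- and $s$-derivatives of the family. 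Since $H_{s,t}$ is in particular an associated family of Hamiltonians for the path $t\mapsto \Lambda_{s,t}$, the definition of $\CC$ gives $\CC(\Lambda_{s,}) = \int_0^1 \eta_t(s,t)\,dt$.

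Next I would differentiate under the integral sign, apply $d\eta = 0$, and use the fundamental theorem of calculus in $t$:
\[
\frac{d}{ds}\CC(\Lambda_{s,}) = \int_0^1 \frac{\partial \eta_t}{\partial s}\,dt = \int_0^1 \frac{\partial \eta_s}{\partial t}\,dt = \eta_s(s,1) - \eta_s(s,0).
\]
It then remains to identify the two boundary terms. At $t=0$, the hypothesis $\Lambda_{s,0}\equiv \Lambda_0$ forces $\frac{d}{ds}\Lambda_{s,0} = 0$, so $K_{s,0}|_{\Lambda_0}$ has vanishing differential and is therefore locally constant; compact support in the non-compact case of $L$, or the normalization $\int_d\beta = 0$ in the compact case, then gives $\eta_s(s,0)=0$. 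At $t=1$, the functions $K_{s,1}|_{\Lambda_{s,1}}$ and $k_s$ share the same differential, so they differ by a locally constant function whose integral against $\beta|_{\Lambda_{s,1}}$ vanishes for the same reason. Hence $\eta_s(s,1) = \int_{\Lambda_{s,1}} k_s\,\beta$, and the formula follows.

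The main obstacle is essentially already cleared by Lemma~\ref{lm:closed}: once $\eta$ is known to be closed, the rest of the argument is Stokes' theorem on a square. The only point requiring care is tracking the constant ambiguity in Hamiltonian primitives on a compact Lagrangian at the two boundary segments $t=0$ and $t=1$; the hypothesis $\int_d\beta = 0$ is precisely the condition that renders these constants invisible under pairing with $\beta$, so no further work is needed. As an immediate consequence, taking $\Lambda_{s,1}$ also independent of $s$ makes $k_s\equiv 0$ and yields Theorem~\ref{tm:main}.
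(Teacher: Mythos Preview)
Your proof is correct and follows essentially the same route as the paper: invoke Corollary~\ref{cy:help} to make the two-parameter family exact, use the closed $1$-form $\eta$ from Lemma~\ref{lm:closed}, and reduce to the boundary terms via Stokes' theorem on the square. The only cosmetic differences are that the paper integrates $\eta$ over the rectangle $[0,s]\times[0,1]$ and then differentiates in $s$, and it dispatches the $t=0$ boundary by directly taking $h_{(s,0),\partial_s}=0$ rather than arguing via the constant ambiguity in the ambient Hamiltonian $K_{s,0}$.
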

\begin{proof}
By Corollary~\ref{cy:help}, $\Lambda$ considered as a family $\Theta = [0,1]^2 \to \L$ is exact. So, we take $\eta$ as above. Moreover, we claim that
\begin{equation}\label{eq:e0}
\eta|_{[0,1] \times \{0\}} = 0.
\end{equation}
Indeed, since $\Lambda_{s,0} = \Lambda_0$ is fixed, we may take $h_{(s,0),\partial_s} = 0.$ By equation~\eqref{eq:e0}, Lemma~\ref{lm:closed} and Stokes' theorem, we have
\begin{equation*}
\CC(\Lambda_{s,1}) - \CC(\Lambda_{0,1}) = \int_{[0,s]\times\{1\}} \eta + \int_{\partial ([0,s] \times [0,1] )} \eta = \int_{[0,s]\times\{1\}} \eta.
\end{equation*}
So,
\[
\frac{\partial}{\partial s} \CC(\Lambda_{s,1}) = \frac{\partial}{\partial s} \int_{[0,s]\times\{1\}} \eta = \eta_{(s,1)}(\partial_s) = \int_{\Lambda_{s,1}}k_s \beta.
\]
\end{proof}

\begin{proof}[Proof of Theorem~\ref{tm:main}]
Suppose $\Lambda = \{\Lambda_{s,}\}_{s \in [0,1]}$ is a homotopy of compactly supported exact Lagrangian paths $\Lambda_{s,} = \{\Lambda_{s,t}\}_{t \in [0,1]}$ with $\Lambda_{s,i} = \Lambda_i$ fixed for $i = 0,1.$ Since $\frac{d}{ds}\Lambda_{s,1} = 0,$ we apply Proposition~\ref{pr:fv} with $k_s = 0$ to conclude that $\CC(\Lambda_{s,})$ is independent of $s.$
\end{proof}
\subsection{The exact case}
\begin{proof}[Proof of Theorem~\ref{tm:exact}]
Let $\Lambda = \{\Lambda_t\}_{t\in [0,1]}$ be an exact compactly supported path of Lagrangian submanifolds. By Lemma~\ref{lm:elfh}, we choose a family $\{\phi_t\}_{t \in [0,1]}$ in $\ham(X,\omega)$ such that $\phi_t(\Lambda_0) = \Lambda_t.$ Let $\xi_t$ be the associated family of vector fields, and let $H_t$ be the associated compactly supported time-dependent Hamiltonian function. Using Cartan's formula and integration by parts, we calculate
\begin{align}
\frac{d}{dt} \int_{\Lambda_t} \lambda\wedge \gamma &= \frac{d}{dt} \int_{\Lambda_0} \phi_t^*(\lambda\wedge\gamma) \label{eq:dt}\\
& = \int_{\Lambda_0}\phi_t^*\left[ d(i_{\xi_t}\lambda + H_t)\wedge \gamma + \lambda \wedge (di_{\xi_t}\gamma + i_{\xi_t}\beta) \right] \notag\\
& = -\int_{\Lambda_0}\phi_t^*\left(H_t \beta + i_{\xi_t}\lambda \wedge \beta - \lambda\wedge i_{\xi_t}\beta + \omega \wedge i_{\xi_t}\gamma\right).\notag
\end{align}
Using the fact that $\phi_t^*\omega = \omega$ and $\Lambda_0$ is Lagrangian, we have
\begin{equation}\label{eq:lag}
\int_{\Lambda_0}\phi_t^*(\omega \wedge i_{\xi_t}\gamma) = \int_{\Lambda_0} \omega \wedge \phi_t^*(i_{\xi_t}\gamma) = 0.
\end{equation}
Using the derivation property of $i_{\xi_t}$ and assumption~\eqref{eq:lbog}, we calculate
\begin{align*}
&i_{\xi_t}\lambda \wedge \beta - \lambda\wedge i_{\xi_t}\beta = i_{\xi_t}(\lambda\wedge\beta) = \\
& \qquad \qquad \qquad = -c i_{\xi_t}(\omega \wedge \gamma) = -c \left(dH_t \wedge \gamma + \omega \wedge i_{\xi_t}\gamma\right).
\end{align*}
So, by equation~\eqref{eq:lag} and integration by parts, we have
\begin{align}\label{eq:iixlb}
\int_{\Lambda_0}\phi_t^*\left(i_{\xi_t}\lambda \wedge \beta - \lambda\wedge i_{\xi_t}\beta\right) &= -c\int_{\Lambda_0}\phi_t^*(dH_t \wedge \gamma) \\
& = c\int_{\Lambda_0}\phi_t^*(H_t \beta).\notag
\end{align}
Combining equations~\eqref{eq:dt},~\eqref{eq:lag},~\eqref{eq:iixlb} and Lemma~\ref{lm:un}, we obtain
\[
\frac{d}{dt} \int_{\Lambda_t} \lambda\wedge \gamma = -(1+c)\int_{\Lambda_t} H_t \beta = -(1+c)\int_{\Lambda_t} h_t \beta.
\]
The theorem follows by integrating over $t \in [0,1].$
\end{proof}

\section{The Calabi homomorphism}\label{sec:class}
Let $(M,\omega_M)$ be a non-compact symplectic manifold of dimension $2m.$ Let $X = M \times M$ and let $p_i : X \to M$ for $i = 1,2,$ denote the projections to the first and second factors respectively. Let $\omega = -p_1^*\omega_M + p_2^* \omega_M.$ Let
\[
\beta = \frac{1}{m+1}\sum_{i = 0}^m p_1^*\omega_M^{i} \wedge p_2^*\omega_M^{m-i}.
\]
Then
\[
\omega \wedge \beta = \frac{1}{m+1}\left(-p_1^*\omega_M^{m+1} + p_2^*\omega_M^{ m+1}\right) = 0
\]
because $\dim M = 2m < 2(m+1).$
Suppose $\phi = \{\phi_t\}_{t \in [0,1]}$ is a path in $\ham(M,\omega_M)$ and $\Lambda = \{\Lambda_t\}_{t \in [0,1]}$ is the corresponding Lagrangian path in $\L(X,M,\cdot).$ Let $\{H_t\}_{t\in[0,1]}$ be the time dependent Hamiltonian function generating $\phi,$ and set
\[
h_t = H_t \circ p_2|_{\Lambda_t}.
\]
Then
\[
\frac{d}{dt} \Lambda_t = dh_t.
\]
With the above choice of $\beta,$ we have
\begin{equation*}
\CC([\Lambda]) = \int_0^1 \int_{\Lambda_t} h_t \beta = \int_0^1 \int_{M}H_t \omega^n = \cal([\phi]).
\end{equation*}
In particular, Theorem~\ref{tm:main} implies the known result that $\cal$ is well-defined.

We turn to the exact case $\omega_M = d\lambda_M.$ Denote by $\xi_M$ the Liouville vector field of $\lambda_M.$ Let $\lambda = -p_1^*\lambda_M + p_2^*\lambda_M.$ So, $d\lambda = \omega$ and the Liouville vector field of $\lambda$ is the unique $\xi$ such that $dp_i \circ \xi = (-1)^i\xi_M \circ p_i$ for $i = 1,2.$ Moreover, we have $L_\xi \beta = m \beta.$ Following Example~\ref{ex:exact}, take
\begin{align*}
\gamma &= \frac{1}{m}i_\xi \beta \\
&= \frac{1}{m(m+1)}\sum_{i = 0}^{m-1} p_1^* \omega_M^i \wedge p_2^*\omega_M^{m-i-1} \wedge \left((i+1) p_1^*\lambda_M + (m-i) p_2^*\lambda_M\right).
\end{align*}
Thus,
\begin{equation}\label{eq:lg}
\lambda \wedge \gamma = -\frac{1}{m} p_1^* \lambda_M \wedge p_2^*\lambda_M \wedge \sum_{i=0}^{m-1} p_1^* \omega_M^i \wedge p_2^*\omega_M^{m-i-1}.
\end{equation}
Let $\phi$ and $\Lambda$ be as above.
It follows from Theorem~\ref{tm:exact} and equation~\eqref{eq:lg} that
\begin{align*}
\cal([\phi]) = \CC([\Lambda]) &= -\frac{1}{m+1}\int_{\Lambda_1} \lambda \wedge \gamma \\
&= -\frac{1}{m+1}\int_M \phi_1^*\lambda_M \wedge \lambda_M\wedge \omega_M^{m-1},
\end{align*}
a formula due to Banyaga~\cite{Ba78}.

\section{Special Lagrangians}\label{sec:slag}
\subsection{Background}
Before explaining how the functional $\CC$ is related to special Lagrangian submanifolds, we recall several definitions and their implications. The following definition is from~\cite{Jo03} with the slight modification that we do not require compactness.
\begin{df}
An $n$-dimensional \emph{almost Calabi-Yau} manifold is a quadruple $(X,J,\omega,\Omega),$ where $(X,J)$ is an $n$-dimensional complex manifold, $\omega$ is a K\"ahler form on $M,$ and $\Omega$ is a nowhere vanishing holomorphic $(n,0)$-form.

We call $(X,J,\omega,\Omega)$ \emph{Calabi-Yau} if in addition
\begin{equation}\label{eq:cy}
\omega^n/n! = (-1)^{n(n-1)/2}(\sqrt{-1}/2)^n\Omega \wedge \overline\Omega.
\end{equation}
\end{df}

We explain briefly what it means to be an (almost) Calabi-Yau manifold. A K\"ahler manifold $(X,J,\omega)$ has a holomorphically trivial canonical bundle if and only if it admits a non-zero holomorphic $(n,0)$ form, which makes it into an almost Calabi-Yau manifold. The existence of a holomorphic $(n,0)$ form $\Omega$ satisfying condition~\eqref{eq:cy} implies the Ricci curvature of $\omega$ vanishes. On the other hand, if $X$ is compact and the real first Chern class of $X$ satisfies $c_1(TX) = 0,$ there exists a unique $\omega$ with vanishing Ricci curvature in any K\"ahler class on $X$ by results of Calabi~\cite{Ca57} and Yau~\cite{Ya78}. If in addition the canonical bundle of $X$ is trivial, there exists a holomorphic $(n,0)$ satisfying condition~\eqref{eq:cy}. See~\cite{Jo08} for further discussion.

The following ideas are due to Harvey and Lawson~\cite{HL82}.
\begin{df}
A closed $p$-form $\chi$ on a Riemannian manifold $(X,g)$ is a \emph{calibration} if for all oriented tangent $p$-planes $\tau$ there holds
\[
\chi|_\tau \leq \vol_\tau.
\]
A submanifold $N \subset X$ is \emph{calibrated} with respect to $\chi$ if
\[
\chi|_N = \vol_N.
\]
\end{df}
Compact calibrated submanifolds minimize volume in the their homology class. Non-compact calibrated submanifolds are locally volume minimizing. Moreover, calibrated submanifolds have a canonical orientation.

Let $(X,J,\omega,\Omega)$ be an almost Calabi-Yau $n$-fold, and let $\rho : X \to \R$ be the smooth function satisfying
\begin{equation}\label{eq:acy}
e^\rho \omega^n/n! = (-1)^{n(n-1)/2}(\sqrt{-1}/2)^n \Omega \wedge \overline \Omega.
\end{equation}
Let $g,\tilde g,$ be the Riemannian metrics on $X$ defined by
\[
g(\xi,\zeta) = \omega(\xi,J\zeta), \qquad \tilde g(\xi,\zeta) = e^{\rho/n}g(\xi,\zeta).
\]
Namely, $g$ is the K\"ahler metric associated with $\omega$ and $\tilde g$ is a conformal rescaling. If $(X,J,\omega,\Omega)$ is Calabi-Yau, then $\rho = 0,$ so $\tilde g = g.$

Harvey and Lawson showed that for any tangent $n$-plane $\tau,$
\begin{equation}\label{eq:sli}
\left|\rule{0pt}{11pt}\re \Omega|_\tau\right|_{\tilde g}^2 + \left|\rule{0pt}{11pt}\im \Omega|_\tau\right|_{\tilde g}^2 \leq 1,
\end{equation}
with the equality holding for $\tau$ Lagrangian. It follows that $\re \Omega$ is a calibration on $(X,\tilde g).$
\begin{df}
Let $(X,J,\omega,\Omega)$ be almost Calabi-Yau $n$-fold. An $n$-dimensional real submanifold $\Lambda \subset X$ is \emph{special Lagrangian} if
\[
\omega|_\Lambda = 0, \qquad \im \Omega|_{\Lambda} =0.
\]
\end{df}
Inequality~\eqref{eq:sli} implies that a submanifold $\Lambda \subset X$ is calibrated with respect to $\re \Omega$ if and only if it is special Lagrangian. For any Lagrangian submanifold $\Lambda \subset X,$ the equality case of~\eqref{eq:sli} implies there exists a smooth function $\vartheta: \Lambda \to S^1$ such that
\[
\Omega|_\Lambda = e^{\sqrt{-1}\vartheta}\vol_{\Lambda,\tilde g} = e^{\sqrt{-1}\vartheta + \rho/2}\vol_{\Lambda,g}.
\]
We call $\vartheta$ the \emph{phase} function of $\Lambda.$ The differential $d\vartheta$ is a well-defined closed $1$-form, which is exact if and only if $\vartheta$ lifts to $\R.$

\subsection{Variational principle}
We return to the object of the paper. Let $(X,J,\omega,\Omega)$ be an almost Calabi-Yau $n$-fold.  Below, all norms, volume forms, gradients and so on, will be those associated to the K\"ahler metric on $X,$ which we denote by $g.$ Since $\Omega$ is of type $(n,0)$ and $\omega$ is of type $(1,1),$ we have
\[
\omega \wedge \Omega = 0.
\]
So, we may apply Theorem~\ref{tm:main} with $\beta = \im \Omega.$ In fact, this is a special case of Example~\ref{ex:prim}. The requirement $\int_d \im \Omega = 0$ can always be satisfied by multiplying $\Omega$ by a complex constant of unit modulus. For the rest of this section, we take $\beta = \im \Omega.$ In the following discussion of variational formulae, we always consider variations of an exact compactly supported Lagrangian path $\Lambda = \{\Lambda_t\}_{t \in [0,1]}$ holding $\Lambda_0$ fixed.

\begin{cy}\label{cy:sl}
A path $\Lambda = \{\Lambda_t\}_{t \in [0,1]}$ is a critical point of $\CC$ if and only if $\Lambda_1$ is special Lagrangian.
\end{cy}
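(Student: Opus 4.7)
The plan is to deduce the corollary directly from the first variational formula, Proposition~\ref{pr:fv}, combined with the pointwise description of $\im\Omega$ on a Lagrangian via its phase function. The setup of the corollary is variations of $\Lambda$ holding $\Lambda_0$ fixed; by Proposition~\ref{pr:fv}, if $\{\Lambda_{s,}\}$ is any such variation and $k_s : \Lambda_{s,1} \to \R$ is a family of (compactly supported) primitives for $\frac{d}{ds}\Lambda_{s,1}$, then
\[
\left.\frac{d}{ds}\CC(\Lambda_{s,})\right|_{s=0} = \int_{\Lambda_1} k_0 \, \im\Omega.
\]
So $\Lambda$ is a critical point if and only if $\int_{\Lambda_1} k\, \im\Omega = 0$ for every function $k$ on $\Lambda_1$ that represents a tangent vector to $\OO$, i.e.\ every compactly supported $k$ (with $k$ determined only up to an additive constant when $\Lambda_1$ is compact, a situation in which the integral is well-defined precisely because of the standing assumption $\int_d \im\Omega = 0$).

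The second step is to translate the condition $\int_{\Lambda_1} k\,\im\Omega = 0$ into a pointwise condition. Using the phase function $\vartheta$ recalled above, one has $\Omega|_{\Lambda_1} = e^{\sqrt{-1}\vartheta + \rho/2}\vol_{\Lambda_1,g}$, so
\[
\im\Omega|_{\Lambda_1} = e^{\rho/2}\sin\vartheta \cdot \vol_{\Lambda_1,g}.
\]
Now apply the fundamental lemma of the calculus of variations: in the non-compact case $k$ ranges over all compactly supported smooth functions on $\Lambda_1$, and vanishing of $\int_{\Lambda_1} k \cdot e^{\rho/2}\sin\vartheta\, \vol$ for all such $k$ forces $\sin\vartheta \equiv 0$ since $e^{\rho/2} > 0$. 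In the compact case, $k$ ranges over smooth functions modulo constants, but the constraint $\int_{\Lambda_1} \im\Omega = 0$ means adding a constant to $k$ does not alter the integral, so the condition is equivalent to $\int_{\Lambda_1} k \cdot e^{\rho/2}\sin\vartheta\, \vol = 0$ for every smooth $k$, again forcing $\sin\vartheta \equiv 0$. In either case this is exactly the vanishing $\im\Omega|_{\Lambda_1} = 0$, i.e.\ $\Lambda_1$ is special Lagrangian.

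The converse direction is automatic: if $\im\Omega|_{\Lambda_1} = 0$ then the variational formula yields $\frac{d}{ds}\CC = 0$ for every admissible variation, so $\Lambda$ is critical. There is essentially no obstacle — the only point requiring care is the bookkeeping for the compact case, where one must verify that the constant indeterminacy in $k$ is compatible with the hypothesis $\int_d \im\Omega = 0$ and does not weaken the conclusion of the fundamental lemma.
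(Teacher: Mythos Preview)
Your proof is correct and follows essentially the same approach as the paper: both apply Proposition~\ref{pr:fv} to reduce criticality to the vanishing of $\int_{\Lambda_1} k\,\im\Omega$ for all admissible $k$, and then conclude $\im\Omega|_{\Lambda_1}=0$. Your version is simply more explicit, spelling out the phase-function description of $\im\Omega|_{\Lambda_1}$ and the compact/non-compact bookkeeping that the paper leaves implicit.
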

\begin{proof}
Let $k : \Lambda_{1} \to \R.$ By Proposition~\ref{pr:fv}, the first variation of $\CC$ in the direction $dk$  is given by $\int_{\Lambda_1} k \wedge \im \Omega$. This integral vanishes for all $k$ if and only if $\im \Omega|_{\Lambda_1} = 0.$
\end{proof}
\begin{pr}\label{pr:svc}
Let $\{\Lambda_{s,}\}_{s \in (-\epsilon,\epsilon)}$ be a family of exact compactly supported Lagrangian paths with $\Lambda_{s,0} = \Lambda_0$ fixed. Suppose $\Lambda_{0,}$ is a critical point of $\CC$ and let $k : \Lambda_{0,1} \to \R$ be such that $\frac{d}{ds}\Lambda_{s,1}|_{s=0} = dk.$ Then
\[
\left .\frac{d^2}{ds^2} \CC([\Lambda_{s,}])\right|_{s = 0} = \int_{\Lambda_{0,1}} dk \wedge i_{\nabla k} \re \Omega= \int_{\Lambda_{0,1}} |dk|^2 e^{\rho/2}\vol.
\]
So, the second variation is non-negative, vanishing if and only if $k$ is constant.
\end{pr}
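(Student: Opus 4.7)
The plan is to differentiate the first variation formula of Proposition~\ref{pr:fv} in $s$ once more. That proposition gives $\frac{d}{ds}\CC([\Lambda_{s,}]) = \int_{\Lambda_{s,1}} k_s\,\im\Omega,$ where $k_s : \Lambda_{s,1} \to \R$ has compact support, $dk_s = \frac{d}{ds}\Lambda_{s,1},$ and $k_0 = k.$ By Lemma~\ref{lm:elfh}, I realize the family $\Lambda_{s,1}$ as $\phi_s(\Lambda_{0,1})$ for a smooth Hamiltonian isotopy $\phi_s$ of $X$ with $\phi_0 = \id_X,$ generated by a compactly supported Hamiltonian $K_s$ satisfying $K_s|_{\Lambda_{s,1}} = k_s;$ let $\zeta_s$ be the associated vector field. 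Pulling back to $\Lambda_{0,1},$ applying Cartan's formula, and using $d\im\Omega = 0$ together with $\zeta_s K_s = 0,$ I obtain
\[
\left.\frac{d^2}{ds^2}\CC([\Lambda_{s,}])\right|_{s=0} = \int_{\Lambda_{0,1}}\bigl(\left.\partial_s K_s\right|_{s=0}\,\im\Omega + K_0\, d\, i_{\zeta_0}\im\Omega\bigr).
\]
Corollary~\ref{cy:sl} forces $\im\Omega|_{\Lambda_{0,1}} = 0$ at the critical point, killing the first summand.

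The key step is to express $i_{\zeta_0}\im\Omega|_{\Lambda_{0,1}}$ intrinsically. Set $\Lambda = \Lambda_{0,1}$ and decompose $\zeta_0|_\Lambda = \zeta_0^T + \zeta_0^N$ into parts tangent and normal to $\Lambda.$ Because $\im\Omega|_\Lambda = 0,$ the tangent part contributes nothing upon restriction. The identity $i_{\zeta_0}\omega = dK_0,$ combined with $\Lambda$ being Lagrangian and the K\"ahler relation $g(u,v) = \omega(u,Jv),$ forces $\zeta_0^N = -J\nabla k.$ Since $\Omega$ is of type $(n,0),$ a direct local check in holomorphic coordinates gives $i_{Jv}\Omega = i\,i_v\Omega,$ whence $i_{Jv}\im\Omega = i_v\re\Omega,$ and therefore
\[
i_{\zeta_0}\im\Omega\big|_\Lambda = -\, i_{\nabla k}\re\Omega\big|_\Lambda.
\]
Substituting and integrating by parts on $\Lambda$ (legal by the compact support of $k$) gives
\[
\left.\frac{d^2}{ds^2}\CC([\Lambda_{s,}])\right|_{s=0} = -\int_\Lambda k\, d\bigl(i_{\nabla k}\re\Omega\bigr) = \int_\Lambda dk\wedge i_{\nabla k}\re\Omega,
\]
which is the first claimed equality.

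For the second equality, since $\Lambda$ is special Lagrangian its phase is locally constant; after rotating $\Omega$ by a unit complex constant, assume $\re\Omega|_\Lambda = e^{\rho/2}\vol.$ At a point where $\nabla k\neq 0,$ choose an oriented orthonormal frame $e_1,\ldots,e_n$ of $T\Lambda$ with $e_1 = \nabla k/|\nabla k|$; then $dk = |dk|\,e_1^\ast$ and $i_{\nabla k}\re\Omega = |dk|\, e^{\rho/2}\, e_2^\ast\wedge\cdots\wedge e_n^\ast,$ so $dk\wedge i_{\nabla k}\re\Omega = |dk|^2 e^{\rho/2}\vol.$ Non-negativity and the equality case (constancy of $k,$ hence $k \equiv 0$ by compact support) follow. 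I expect the main technical obstacle to be the algebraic identification $i_{\zeta_0}\im\Omega|_\Lambda = -i_{\nabla k}\re\Omega|_\Lambda,$ which fuses the Hamiltonian identity, the Lagrangian condition, and the complex-linearity of $\Omega.$
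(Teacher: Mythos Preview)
Your argument is correct and follows essentially the same route as the paper's: differentiate the first variation formula from Proposition~\ref{pr:fv}, use $\im\Omega|_{\Lambda_{0,1}}=0$ at the critical point to kill the $\partial_s K_s$ term, and invoke the type-$(n,0)$ identity $i_{-J\nabla k}\im\Omega=-i_{\nabla k}\re\Omega$ together with the calibration $\re\Omega|_\Lambda=e^{\rho/2}\vol$. The only cosmetic difference is that the paper uses Lemma~\ref{lm:lift} to pick a lifting $f_s$ of $\Lambda_{s,1}$ to $\X$ with velocity exactly $-J\nabla k$ at $s=0$, whereas you take an ambient Hamiltonian isotopy via Lemma~\ref{lm:elfh} and then argue that the tangential component of $\zeta_0$ drops out upon restriction; your remark about rotating $\Omega$ is superfluous, since $\im\Omega|_\Lambda=0$ already forces $\re\Omega|_\Lambda=e^{\rho/2}\vol$ for the calibrated orientation.
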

\begin{proof}
Let $f_s : L \to X$ for $s \in (-\epsilon,\epsilon)$ be a smooth family lifting $\Lambda_{s,1}$ to $\X.$ By Lemma~\ref{lm:lift}, we may assume that
\[
\left.\frac{d}{ds}f_s\right|_{s = 0} = - J \nabla k\circ f_s.
\]
Let $k_s : \Lambda_{s,1} \to \R$ be a family of functions with $dk_s = \frac{d}{ds} \Lambda_{s,1}.$
Since $\Lambda_{0,}$ is $\CC$ critical, by Corollary~\ref{cy:sl} we have $\im \Omega|_{\Lambda_{0,}} = 0.$ So, by Proposition~\ref{pr:fv} we have
\begin{align*}
\left .\frac{d^2}{ds^2} \CC([\Lambda_{s,}])\right|_{s =0} &= \left .\frac{d}{ds}\right|_{s = 0} \int_{\Lambda_{s,1}} k_s \im \Omega \\
&= \left .\frac{d}{ds}\right|_{s=0}\int_L f_s^*(k_s \im \Omega) \\
& = \int_{\Lambda_{0,1}} k  di_{(-J\nabla k)} \im \Omega \\
& = -\int_{\Lambda_{0,1}} k di_{\nabla k} \re \Omega \\
& = \int_{\Lambda_{0,1}} dk \wedge i_{\nabla k} \re \Omega.
\end{align*}
Since $\Lambda_{0,1}$ is calibrated with respect to $\tilde g$ and $\re \Omega,$ we have
\[
i_{\nabla k} \re \Omega = e^{\rho/2}i_{\nabla k}\vol = e^{\rho/2}*dk.
\]
So,
\[
\int_{\Lambda_{0,1}} dk \wedge i_{\nabla k} \re \Omega = \int_{\Lambda_{0,1}} |dk|^2 e^{\rho/2}\vol.
\]
\end{proof}
\subsection{Geodesics and convexity}\label{ssec:gc}
Let
\[
\L^+ \subset \L(X,L,d)
\]
be the open subspace consisting of $\Lambda$ such that $\re \Omega|_{\Lambda}$ is nowhere-vanishing and agrees with the orientation of $\Lambda.$ In the terminology of~\cite{CL04,Ne07}, the Lagrangians in $\L^+$ are called almost calibrated. For simplicity, we restrict to the case that $L$ is compact. Let $\OO^+ \subset \L^+$ be an exact isotopy class and let $\widetilde \OO^+$ be its universal cover. We show that $\CC$ considered as a functional $\widetilde\OO^+ \to \R$ is convex. That is, the second derivative of $\CC$ along a geodesic in $\widetilde\OO^+$ is positive.

The key point here is the definition of geodesic, which in turn rests upon the definition of a connection on the principle bundle
\[
\X^+ \to \L^+
\]
with fiber
\[
\X^+_{\Lambda} = \{ f : L \to \Lambda | f^*\re \Omega = \mu\},
\]
where $\mu$ is a volume form with $\int_{L}\mu = \int_d \re \Omega.$ The structure group $\G^+$ of $\X^+$ is the $\mu$-preserving diffeomorphisms of $L.$ Moser's argument shows that $\X^+$ is non-empty. See~\cite{Do99} and~\cite{Wei90} for related constructions.

Given an exact path $\Lambda: (-\epsilon,\epsilon) \to \L^+$ and an embedding $f_0 : L \to X$ with image $\Lambda_0$ and $f_0^*\re\Omega = \mu,$ we define as follows the  horizontal lift $f$ of $\Lambda$ extending $f_0$. Suppose $h_t : \Lambda_t \to \R$ is a family of functions with $\frac{d}{dt}\Lambda_t = dh_t.$ Recall our extension of interior multiplication to vector fields along a map in Section~\ref{ssec:tn}. Let $v_t$ be the unique vector field along the inclusion map of $\Lambda_t$ such that
\begin{align}\label{eq:o}
i_{v_t}\omega = dh_t, \\
i_{v_t}\!\re\Omega = 0. \label{eq:O}
\end{align}
Condition~\eqref{eq:o} determines $v_t$ up to a vector tangent to $\Lambda_t.$ Since $\re\Omega|_{\Lambda_t}$ is assumed non-degenerate, condition~\eqref{eq:O} determines the tangential component of $v_t.$ So, using Lemma~\ref{lm:lift}, we take $f$ to be the lift extending $f_0$ and satisfying
\begin{equation}\label{eq:par}
\frac{d}{dt} f_t = v_t \circ f_t.
\end{equation}
It follows from condition~\eqref{eq:O} and Cartan's formula that $f_t^*\re \Omega$ is constant, so $f$ does in fact lift $\Lambda$ to $\X^+ \subset \X.$

\begin{rem}
We have seen another way of defining a horizontal lift $f_t$ in the proof of Proposition~\ref{pr:svc}, although in this case only to $\X$. Namely,
\begin{equation}\label{eq:naive}
\frac{d}{dt} f_t = - J \nabla h_t \circ f_t.
\end{equation}
The vector field $-J\nabla h_t$ is uniquely characterized by being perpendicular to $\Lambda_t$ and satisfying
\begin{equation}\label{eq:jht}
i_{-J\nabla h_t}\omega = dh_t.
\end{equation}
By way of comparison, denoting by $\vartheta_t$ the phase function of $\Lambda_t,$ we claim that
\begin{equation}\label{eq:vtmet}
v_t = -J\nabla h_t - \tan(\vartheta_t) \nabla h_t.
\end{equation}
Indeed, for such $v_t$ we have
\begin{align*}
i_{v_t}\re\Omega &= -i_{J\nabla h_t}\re\Omega - \tan(\vartheta_t) i_{\nabla h_t}\re\Omega \\
&= i_{\nabla h_t}\im \Omega - \sin(\vartheta)e^{\rho/2} i_{\nabla h_t} \vol \\ & = 0.
\end{align*}
The condition $i_{v_t}\omega = dh_t$ follows from equation~\eqref{eq:jht} and the fact that $\nabla h_t$ is tangent to $\Lambda_t.$
\end{rem}

For $\Lambda \in \L^+,$ let
\[
\HH_\Lambda = \left\{ h \in C^\infty(\Lambda)\left| \int_{\Lambda} h \re \Omega = 0\right.\right\}.
\]
The map $\HH_\Lambda \to T_{\Lambda}\L$ given by $h \mapsto dh$ identifies $\HH_{\Lambda}$ with the space of exact first order deformations of $\Lambda.$  Define an inner product on $\HH_\Lambda$ by
\[
(h,k) = \int_\Lambda hk \re \Omega.
\]
Let $\HH\to \L^+$ be the bundle with fiber $\HH_\Lambda.$ That is, $\HH$ is the vector bundle associated with the $\G^+$ principal bundle $\X^+$ and the representation of $\G^+$ on
\[
C_0^\infty(L) = \left\{ h \in C^\infty(L)\left| \int_L h \mu = 0 \right.\right\},
\]
given by $(\psi,f) \mapsto f \circ \psi.$ This representation preserves the $L^2$ inner product on $C^\infty_0(L)$ with respect to $\mu.$ In fact, the $L^2$ inner product on $C^\infty_0(L)$ induces the metric $(\cdot,\cdot)$ on $\HH.$
So, the connection on $\X^+$ induces a connection on $\HH,$ which preserves the metric $(\cdot,\cdot).$ Thus it is natural to make the following definitions.
\begin{df}\label{df:eg}
Let $\Lambda : [a,b] \to \L^+$ be an exact path, and let $h_t \in \H_{\Lambda_t}$ satisfy $dh_t = \frac{d}{dt}\Lambda_t.$ The \emph{energy} of $\Lambda$ is given by
\[
E(\Lambda) = \int_a^b (h_t,h_t) dt = \int_a^b \int_{\Lambda_t} h_t^2 \re \Omega.
\]
Let $f$ be a horizontal lifting of $\Lambda$ to $\X^+.$ We call $\Lambda$ a \emph{geodesic} if $h_t \circ f_t$ is constant in $t.$
\end{df}
\begin{pr}\label{pr:fvE}
An exact Lagrangian path $\{\Lambda_t\}_{t\in[a,b]}$ is a geodesic if and only if it is a critical point of the energy functional with respect to proper exact variations.
\end{pr}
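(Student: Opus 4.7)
The plan is to compute $\frac{d}{ds}E(\Lambda_{s,\cdot})|_{s=0}$ for a proper exact variation $\{\Lambda_{s,t}\}$ of a base path $\Lambda_{0,\cdot}$, pull everything back to the model manifold $L$ via a carefully chosen two-parameter lift, and identify the resulting Euler--Lagrange equation with the geodesic equation $\partial_t(h_t\circ f_t)=0.$

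The first task is to construct a lift $f:(-\epsilon,\epsilon)\times[a,b]\to\X^+$ of the variation such that at $s=0$ \emph{both} $v:=\partial_t f$ and $u:=\partial_s f$ are horizontal. Concretely, let $f_{0,\cdot}$ be the horizontal lift in $t$ of $\Lambda_{0,\cdot}$ starting from a fixed $f_a\in\X^+_{\Lambda_a},$ and for each $t$ let $f_{\cdot,t}$ be the horizontal lift in $s$ of the exact transverse path $s\mapsto\Lambda_{s,t}$ starting from $f_{0,t}.$ Each slicewise horizontal lift preserves $f^*\re\Omega=\mu,$ so
\[
E(\Lambda_{s,\cdot})=\int_a^b\int_L\tilde h_{s,t}^{\,2}\,\mu\,dt,\qquad \tilde h_{s,t}:=h_{s,t}\circ f_{s,t},
\]
and analogously $\tilde k_{s,t}:=k_{s,t}\circ f_{s,t}$ for the $\HH$-potential $k_{s,t}$ of $\partial_s\Lambda_{s,t}.$ Properness of the variation forces $\tilde k_{s,a}=\tilde k_{s,b}=0,$ since $k_{s,a}$ is then closed with zero differential, hence constant on $\Lambda_a,$ and of mean zero in $\HH_{\Lambda_a}.$

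Setting $F(s,t,p):=f_{s,t}(p),$ the Lagrangian condition makes $F^*\omega$ have no purely $L$-directional component, so a direct check gives
\[
F^*\omega=\omega(u,v)\,ds\wedge dt+ds\wedge d_L\tilde k+dt\wedge d_L\tilde h.
\]
Evaluating $d(F^*\omega)(\partial_s,\partial_t,Y)=0$ for $Y\in TL$ yields the commutation identity
\[
\partial_s\tilde h-\partial_t\tilde k+\omega(u,v)=c(s,t)
\]
for some function of $s,t$ alone. The decisive step is: at $s=0,$ both $u$ and $v$ are horizontal by construction, so equation~\eqref{eq:vtmet} applies in both directions, giving $v=-J\nabla h-\tan\vartheta\,\nabla h$ and $u=-J\nabla k-\tan\vartheta\,\nabla k.$ Expanding $\omega(u,v)$ using the K\"ahler identities $\omega(JX,JY)=\omega(X,Y),$ $\omega(JX,Y)=-g(X,Y),$ $\omega(X,JY)=g(X,Y),$ together with the Lagrangian vanishing $\omega|_{T\Lambda\otimes T\Lambda}=0,$ one finds the four resulting terms cancel in pairs, so $\omega(u,v)|_{s=0}=0.$

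Combining, at $s=0$ we have $\partial_s\tilde h=\partial_t\tilde k+c(0,t);$ the $c$-term drops because $\int_L\tilde h\,\mu=\int_{\Lambda_{0,t}}h_{0,t}\,\re\Omega=0,$ and integrating by parts in $t$ (boundary terms zero by properness) gives
\[
\left.\frac{d}{ds}E(\Lambda_{s,\cdot})\right|_{s=0}=-2\int_a^b\int_L\partial_t\tilde h\cdot\tilde k\,\mu\,dt.
\]
If $\Lambda_{0,\cdot}$ is a geodesic then $\partial_t\tilde h\equiv0$ and this vanishes for every proper exact variation. Conversely, a standard flexibility argument shows that any smooth $\tilde k$ on $[a,b]\times L$ that is mean zero in $L$ at each $t$ and vanishes at $t=a,b$ arises from some proper exact variation: one sets $k_{0,t}:=\tilde k\circ f_{0,t}^{-1}\in\HH_{\Lambda_{0,t}}$ and integrates an associated Hamiltonian isotopy, tapered so the endpoints do not move. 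Thus critical-point-ness forces $\partial_t\tilde h$ to be $L^2(\mu)$-orthogonal to every such $\tilde k,$ so $\partial_t\tilde h$ is constant on $L$ for each $t\in(a,b);$ since $\int_L\tilde h\,\mu=0$ identically in $t,$ that constant is $0,$ giving $\partial_t\tilde h\equiv0.$ The main obstacles are the construction of the two-parameter lift with both $u$ and $v$ horizontal at $s=0$ (an ODE-compatibility issue that is straightforward once the slicewise horizontal lift is available from the paragraph preceding Definition~\ref{df:eg}) and the explicit cancellation proving $\omega(u,v)=0$ on horizontal pairs, which is where the specific geometry of the connection on $\X^+\to\L^+$ is essential.
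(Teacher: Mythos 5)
Your argument is correct, and it reaches the first variation formula by a genuinely different mechanism than the paper. The paper's proof stays extrinsic: it invokes Lemma~\ref{lm:elfh} to produce an ambient two-parameter Hamiltonian family $\phi_{s,t}$, works with associated Hamiltonians $H_{s,t},K_{s,t}$ (normalized by $\int_{\Lambda_{s,t}}H_{s,t}\re\Omega=0$), uses the commutation relation of Lemma~\ref{lm:Hban} with its Poisson-bracket term, and then needs Lemma~\ref{lm:ob} together with the horizontality condition $i_{\xi_{s,t}}\re\Omega|_{\Lambda_{s,t}}=0$ to cancel the bracket against the term $\tfrac12\int H^2\,di_{\zeta}\re\Omega$ coming from differentiating the pulled-back volume. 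You avoid all of this by (i) lifting into $\X^+$ in both parameter directions, so that $f_{s,t}^*\re\Omega=\mu$ is literally constant and no $di_{\zeta}\re\Omega$ term ever appears, (ii) replacing Lemma~\ref{lm:Hban} by the intrinsic identity $\partial_s\tilde h-\partial_t\tilde k+\omega(u,v)=c(s,t)$ obtained from $d(F^*\omega)=0$ on $(-\epsilon,\epsilon)\times[a,b]\times L$, and (iii) killing the cross term by the pointwise computation $\omega(u,v)=0$ for a pair of horizontal vectors, via~\eqref{eq:vtmet} and the K\"ahler identities (your cancellation checks out: the $J$--$J$ and $\tan^2\vartheta$ terms die on the Lagrangian, the two mixed terms give $\mp g(\nabla k,\nabla h)$). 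What your route buys is economy and transparency: no ambient Hamiltonians or normalization on $X$ are needed, and the converse direction (which the paper leaves implicit in the sentence ``It follows that\dots'') is made explicit through the realizability of an arbitrary mean-zero $\tilde k$ vanishing at $t=a,b$. What the paper's route buys is uniformity: the same machinery (Lemmas~\ref{lm:Hban} and~\ref{lm:ob}) is reused verbatim in the proofs of Lemma~\ref{lm:closed} and Proposition~\ref{pr:fv}. Two small points you should state explicitly: the transverse paths $s\mapsto\Lambda_{s,t}$ are exact (this is Corollary~\ref{cy:help}, or part of what ``exact variation'' means), which is what licenses your slicewise $s$-horizontal lifts and their smooth dependence on $t$; and in the realizability step, the tapered variation $\Lambda_{s,t}=\psi^s_{K_t}(\Lambda_{0,t})$ should be checked to consist of exact paths in $t$ and to remain in $\L^+$ for small $s$ --- both standard, the former by the flux argument of Remark~\ref{rem:eah}, the latter by openness of $\L^+$ and compactness of $[a,b]$.
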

\begin{proof}
Let $\{\Lambda_{s,}\}_{s\in (-\epsilon,\epsilon)}$ be a family of exact Lagrangian paths such that $\Lambda_{0,t} = \Lambda_t$ and $\Lambda_{s,i} = \Lambda_{i}$ for $i = a,b.$  Let $h_{s,t} \in \HH_{\Lambda_{s,t}}$ satisfy $dh_{s,t} = \frac{\partial}{\partial t}\Lambda_{s,t}$ and write $h_t = h_{0,t}.$ For the rest of the proof of this lemma, take $\Theta = (-\epsilon,\epsilon)\times [a,b]$ and let $\Lambda : \Theta \to \L$ denote the two-parameter family $\{\Lambda_{s,t}\}_{(s,t) \in \Theta}.$ By Corollary~\ref{cy:help} we know that $\Lambda$ is an exact family. Let $f : \Theta \to \X$ be a lifting of $\Lambda$ such that $f_{s,}$ is horizontal and write $f_t = f_{0,t}.$ By Lemma~\ref{lm:elfh}, we choose a family $\phi_{s,t} \in \ham(X,\omega)$ such that $\phi_{s,t} \circ f_{0,0} = f_{s,t}.$ Let $\xi_{s,t}$ and $\zeta_{s,t}$ be the associated vector fields. Let $H_{s,t}$ and $K_{s,t}$ be associated Hamiltonian functions with $H_{s,t}$ normalized so that
\begin{equation}\label{eq:Hnor}
\int_{\Lambda_{s,t}} H_{s,t} \re \Omega = 0.
\end{equation}
Thus, by Lemma~\ref{lm:un} we have $H_{s,t}|_{\Lambda_{s,t}} = h_{s,t}.$
So, we calculate
\begin{align}
\frac{d}{ds} E(\Lambda_{s,}) &= \frac{1}{2} \frac{d}{ds} \int_a^b \int_{L} f_{s,t}^*(H_{s,t}^2 \re \Omega) dt \notag\\
& = \int_a^b \int_L f_{s,t}^*\left[\frac{1}{2} H_{s,t}^2 di_{\zeta_{s,t}}\re\Omega\, + \right. \label{eq:dE}\\
 & \qquad\qquad \qquad \qquad  \left . +\, H_{s,t}\left(\{K_{s,t},H_{s,t}\} + \frac{\partial H_{s,t}}{\partial s}\right)\re\Omega\right] dt. \notag
\end{align}
Because we chose $f_{s,}$ to be horizontal, we have
\begin{equation}\label{eq:fhor}
i_{\xi_{s,t}}\re \Omega|_{\Lambda_{s,t}} = 0.
\end{equation}
Since $H_{s,t}\xi_{s,t}$ is the Hamiltonian vector field of $\frac{1}{2} H_{s,t}^2,$ Lemma~\ref{lm:ob} and equation~\eqref{eq:fhor} give
\begin{align}
&\frac{1}{2}\int_L f_{s,t}^*\left(H_{s,t}^2 di_{\zeta_{s,t}} \re \Omega\right) = \label{eq:dir}\\
& \qquad = \frac{1}{2}\int_{\Lambda_{s,t}} H_{s,t}^2 di_{\zeta_{s,t}}\re\Omega \notag\\
& \qquad = \int_{\Lambda_{s,t}}H_{s,t}\{H_{s,t},K_{s,t}\}\re\Omega +  K_{s,t} d\left(H_{s,t} i_{\xi_{s,t}}\re\Omega\right) \notag\\
& \qquad = \int_{\Lambda_{s,t}}H_{s,t}\{H_{s,t},K_{s,t}\}\re\Omega.\notag
\end{align}
Combining equations~\eqref{eq:dE} and~\eqref{eq:dir}, we obtain
\begin{equation*}
\frac{d}{ds} E(\Lambda_{s,}) =  \int_a^b \int_{L} f_{s,t}^*\left(H_{s,t}\frac{\partial H_{s,t}}{\partial s} \re \Omega\right) dt.
\end{equation*}
Using Lemma~\ref{lm:Hban} equation~\eqref{eq:intc} combined with normalization~\eqref{eq:Hnor}, we calculate
\begin{align*}
\int_{L} f_{s,t}^*\left(H_{s,t}\frac{\partial H_{s,t}}{\partial s} \re \Omega\right) &= \int_L f_{s,t}^*\left[H_{s,t}\left(\frac{\partial K_{s,t}}{\partial t} + \{H_{s,t},K_{s,t}\}\right)\re \Omega \right] \\
&= \frac{d}{dt}\int_{L}f_{s,t}^*(H_{s,t}K_{s,t}\re\Omega)\, - \\ &\qquad \qquad -\int_L f_{s,t}^*\left(\frac{\partial H_{s,t}}{\partial t} K_{s,t}\re \Omega \right).
\end{align*}
In the second transition, we have used equation~\eqref{eq:fhor}. For $i = a,b,$ since $\Lambda_{s,i} = \Lambda_i$ is fixed, we deduce that $K_{s,i}|_{\Lambda_{s,i}}$ is constant. So, by normalization~\eqref{eq:Hnor} the boundary contributions from integration by parts vanish, and we are left with
\[
\frac{d}{ds} E(\Lambda_{s,}) = -\int_a^b \int_{L}  \frac{\partial}{\partial t}(h_{s,t}\circ f_{s,t}) f_{s,t}^*( K_{s,t}\re \Omega)dt.
\]
It follows that $\{\Lambda_t\}$ is a critical point of $E$ if and only if $h_t \circ f_t$ is constant.
\end{proof}
\begin{pr}\label{pr:con}
Let $\Lambda : [a,b]\times [0,1] \to \L$ be a family of exact paths such that $\Lambda_{s,0} = \Lambda_0$ is fixed and $\Lambda_{,1} = \{\Lambda_{s,1}\}_{s \in [0,1]}$ is a geodesic. Let $k_s \in \HH_{\Lambda_{s,1}}$ satisfy $\frac{d}{ds}\Lambda_{s,1} = dk_s$ and let $\vartheta_s$ be the phase of $\Lambda_{s,1}.$ Then
\[
\frac{d^2}{ds^2}\CC([\Lambda_{s,}])= \int_{\Lambda_{s,1}}  \frac{|dk_s|^2}{\cos(\vartheta_s)}e^{\rho/2}\vol.
\]
In particular, if $\Lambda_{,1}$ is not constant, then
\[
\frac{d^2}{ds^2}\CC([\Lambda_{s,}]) > 0.
\]
\end{pr}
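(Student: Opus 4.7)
The plan is to start from the first variation formula in Proposition~\ref{pr:fv}, which gives
\[
\frac{d}{ds}\CC([\Lambda_{s,}]) = \int_{\Lambda_{s,1}} k_s\, \im\Omega,
\]
and differentiate once more, using the geodesic hypothesis to kill the unwanted term. More precisely, I would choose a horizontal lift $f_s : L \to X$ of the geodesic $\Lambda_{,1}$ to $\X^+$, so $v_s := \frac{d}{ds}f_s$ satisfies $i_{v_s}\omega = dk_s$ and $i_{v_s}\re\Omega = 0$, and by Definition~\ref{df:eg} the function $k_s \circ f_s$ is independent of $s$. Writing $\frac{d}{ds}\CC = \int_L f_s^*(k_s\, \im\Omega)$ and differentiating, the $\frac{d}{ds}(k_s\circ f_s)$ term vanishes by the geodesic condition, leaving only $\int_L (k_s\circ f_s)\, \frac{d}{ds}f_s^*\im\Omega$.

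Next, I would apply Cartan's formula (Remark~\ref{rem:car}), using that $\im\Omega$ is closed, to rewrite $\frac{d}{ds}f_s^*\im\Omega = d\, i_{v_s}\im\Omega$. Integration by parts on the closed manifold $L$ (and identifying $L$ with $\Lambda_{s,1}$ via $f_s$) then yields
\[
\frac{d^2}{ds^2}\CC([\Lambda_{s,}]) = -\int_{\Lambda_{s,1}} dk_s \wedge i_{v_s}\im\Omega.
\]

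The computational heart is then evaluating $i_{v_s}\im\Omega|_{\Lambda_{s,1}}$ using the explicit expression $v_s = -J\nabla k_s - \tan(\vartheta_s)\nabla k_s$ from the Remark in Section~\ref{ssec:gc}. The key identity is that for $\Omega$ of type $(n,0)$ and a real vector $V$, one has $i_{JV}\Omega = \sqrt{-1}\, i_V\Omega$, hence $i_{JV}\im\Omega = i_V\re\Omega$. Combined with the relations $i_{\nabla k_s}\re\Omega|_{\Lambda_{s,1}} = \cos(\vartheta_s)e^{\rho/2} *dk_s$ and $i_{\nabla k_s}\im\Omega|_{\Lambda_{s,1}} = \sin(\vartheta_s)e^{\rho/2} *dk_s$ (following from $\Omega|_{\Lambda} = e^{\sqrt{-1}\vartheta + \rho/2}\vol$ and that $\nabla k_s$ is tangent to $\Lambda_{s,1}$), the two contributions combine as
\[
i_{v_s}\im\Omega|_{\Lambda_{s,1}} = -\Bigl(\cos\vartheta_s + \tfrac{\sin^2\vartheta_s}{\cos\vartheta_s}\Bigr)e^{\rho/2}*dk_s = -\frac{e^{\rho/2}}{\cos\vartheta_s}*dk_s,
\]
and wedging with $dk_s$ produces the claimed integrand $\frac{|dk_s|^2}{\cos\vartheta_s}e^{\rho/2}\vol$.

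For strict positivity, since $\Lambda_{s,1}\in\L^+$ one has $\cos\vartheta_s > 0$ everywhere, so it only remains to rule out $dk_s \equiv 0$. If $dk_s$ vanished at some $s_0$, then $k_{s_0}$ would be constant on $\Lambda_{s_0,1}$, but the normalization $k_s\in\H_{\Lambda_{s,1}}$ forces this constant to be zero; since $k_s\circ f_s$ is $s$-independent along the geodesic, this would force $k_s \equiv 0$ for all $s$ and hence $\Lambda_{,1}$ constant, contradicting the hypothesis. The main technical obstacle I anticipate is the careful bookkeeping around the extended interior product of Section~\ref{ssec:tn}, making sure that the integration by parts on $L$ correctly reassembles into an integral over $\Lambda_{s,1}$ of the restriction $i_{v_s}\im\Omega|_{\Lambda_{s,1}}$ despite $v_s$ not being tangent to $\Lambda_{s,1}$; the rest is algebraic manipulation using the calibration identities.
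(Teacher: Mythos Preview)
Your proposal is correct and follows essentially the same route as the paper's own proof: horizontal lift of the geodesic $\Lambda_{,1}$, use that $k_s\circ f_s$ is constant to reduce $\frac{d^2}{ds^2}\CC$ to $\int_{\Lambda_{s,1}} k_s\, d\, i_{v_s}\im\Omega$, then evaluate $i_{v_s}\im\Omega$ via the decomposition $v_s = -J\nabla k_s - \tan(\vartheta_s)\nabla k_s$ and the identity $i_{JV}\im\Omega = i_V\re\Omega$, and integrate by parts. The only cosmetic difference is that the paper substitutes $i_{v_s}\im\Omega = -\frac{e^{\rho/2}}{\cos\vartheta_s}\, i_{\nabla k_s}\vol$ before integrating by parts, whereas you integrate by parts first and then substitute using $*dk_s$; the strict positivity argument is identical.
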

\begin{proof}
Let $v_s$ be the family of vector fields along $\Lambda_{s,1}$ satisfying conditions~\eqref{eq:o} and~\eqref{eq:O} with $s$ in place of $t$ and $k$ in place of $h.$ Let $f$ be a horizontal lifting of $\Lambda_{,1}.$ Since $\Lambda_{,1}$ is a geodesic, $k_s \circ f_s$ is independent of $s.$ So, we write $k = k_s \circ f_s.$
By equation~\eqref{eq:vtmet} with $s$ in place of $t$ and $k$ in place of $h,$ we have
\begin{align*}
i_{v_s}\im \Omega &= i_{(-J\nabla k_s)} \im \Omega - \tan(\vartheta_t)  i_{\nabla k_s} \im\Omega \\
& = -i_{(\nabla k_s)} \re \Omega - \tan(\vartheta_t)  i_{\nabla k_s} \im\Omega\\
& = -\left(\cos(\vartheta_s) + \frac{\sin^2(\vartheta_s)}{\cos(\vartheta_s)}\right) e^{\rho/2}i_{\nabla k_s}\vol \\
& = -\frac{e^{\rho/2}i_{\nabla k_s}\vol}{\cos(\vartheta)}.
\end{align*}
So,
\begin{align*}
\frac{d^2}{ds^2} \CC([\Lambda_{s,}]) &= \frac{d}{ds} \int_{\Lambda_{s,1}} k_s \im \Omega \\
&= \frac{d}{ds}\int_L k f_s^*(\im \Omega) \\
& = \int_{\Lambda_{s,1}} k_s  d i_{v_s} \im \Omega \\
& = -\int_{\Lambda_{s,1}} k_s d \left(\frac{e^{\rho/2}i_{\nabla k_s}\vol}{\cos(\vartheta_s)}\right) \\
& = \int_{\Lambda_{s,1}} \frac{|dk_s|^2}{\cos(\vartheta_s)} e^{\rho/2}\vol.
\end{align*}
Since $k = k_s \circ f_s$ is independent of $s,$ if $dk_s = 0$ for any $s,$ then $dk = 0$ and $\Lambda_{s,1}$ is the constant path. The final claim of the proposition follows.
\end{proof}

It is not hard to write down geodesics in $\L^+$ explicitly for simple examples of $(X,J,\omega,\Omega),L,d.$ The author plans to address the general existence problem for geodesics in a future paper.

\subsection{The volume functional}
It is interesting to compare $\CC$ with the volume functional,
\[
\Vol : \L \longrightarrow \R.
\]
For simplicity, we assume in the following that $L$ is compact and $(X,J,\omega,\Omega)$ is Calabi-Yau. Since special Lagrangians are calibrated, they are global minima of $\Vol.$ In fact, equation~\eqref{eq:sli} implies that for $\Lambda \in \L,$ we have the topological lower bound
\begin{equation}\label{eq:mbog}
\Vol(\Lambda) \geq \int_\Lambda \re \Omega = \int_d \re \Omega
\end{equation}
with equality exactly when $\Lambda$ is special.

We proceed to the variational formulae for $\Vol.$ Let $\Lambda = \{\Lambda_t\}$ be an exact Lagrangian path, and let $h_t : \Lambda_t \to \R$ be such that $dh_t = \frac{d}{dt}\Lambda_t.$ Let $\vartheta_t$ be the phase function of $\Lambda_t.$ Let $\langle \cdot,\cdot\rangle_t$ be the induced metric on $\Lambda_t,$ and let $\vol_t$ and $\Delta_t$ be the associated volume form and Laplacian. When clear from the context, we drop the subscript $t.$ Let $f$ be the lifting of $\Lambda$ to $\X$ such that $\frac{d}{dt}f_t = -J\nabla h_t,$ which exists by Lemma~\ref{lm:lift}. The following lemma can be found in~\cite{TY02}.
\begin{lm}
We have
\begin{gather}
\frac{d}{dt} \vartheta_t\circ f_t = (\Delta h_t)\circ f_t, \label{eq:vth}\\
\frac{d}{dt} f_t^*\vol = f_t^*\left(\langle dh_t,d\vartheta_t\rangle \vol\right). \notag
\end{gather}
\end{lm}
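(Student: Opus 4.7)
The plan is to differentiate the identity $f_t^*\Omega = e^{\sqrt{-1}\vartheta_t \circ f_t}\, f_t^*\vol_t$ in two different ways and then separate real and imaginary parts. Differentiating the right-hand side directly gives
\[
\tfrac{d}{dt} f_t^*\Omega = \sqrt{-1}\tfrac{d}{dt}(\vartheta_t\circ f_t)\, e^{\sqrt{-1}\vartheta_t\circ f_t}\, f_t^*\vol_t + e^{\sqrt{-1}\vartheta_t\circ f_t}\, \tfrac{d}{dt} f_t^*\vol_t.
\]
On the other hand, since $d\Omega = 0$, the Cartan formula of Remark~\ref{rem:car} yields $\tfrac{d}{dt}f_t^*\Omega = d\, i_{v_t}\Omega$, where $v_t = -J\nabla h_t\circ f_t$.

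The main computation is to express $i_{v_t}\Omega$ in terms of data on $L$. I would use that $\Omega$ is of type $(n,0)$, so for any tangent vector $X$ one has $\Omega(JX,\cdot,\ldots,\cdot) = \sqrt{-1}\,\Omega(X,\cdot,\ldots,\cdot)$, and combine this with the phase identity $\Omega(X_1,\ldots,X_n) = e^{\sqrt{-1}\vartheta_t}\vol_t(X_1,\ldots,X_n)$ for tangent frames to $\Lambda_t$. Since $\nabla h_t$ is tangent to $\Lambda_t$, a short calculation gives
\[
i_{v_t}\Omega = -\sqrt{-1}\, e^{\sqrt{-1}\vartheta_t \circ f_t}\, f_t^*(*_t\, dh_t),
\]
where $*_t$ is the Hodge star on $(\Lambda_t, g|_{\Lambda_t})$. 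Applying $d$ and using $d *_t dh_t = (\Delta_t h_t)\vol_t$ (with the sign convention $\Delta = -d^*d$ that the paper appears to use) produces
\[
d\, i_{v_t}\Omega = e^{\sqrt{-1}\vartheta_t\circ f_t}\, d(\vartheta_t\circ f_t)\wedge f_t^*(*_t dh_t) - \sqrt{-1}\, e^{\sqrt{-1}\vartheta_t\circ f_t}\, f_t^*\!\bigl((\Delta_t h_t)\vol_t\bigr).
\]

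Equating the two expressions for $\tfrac{d}{dt}f_t^*\Omega$ and cancelling $e^{\sqrt{-1}\vartheta_t\circ f_t}$, the imaginary parts give $\tfrac{d}{dt}(\vartheta_t\circ f_t) = (\Delta h_t)\circ f_t$, which is the first identity. The real parts give $\tfrac{d}{dt}f_t^*\vol_t = d(\vartheta_t\circ f_t)\wedge f_t^*(*_t dh_t) = f_t^*(d\vartheta_t \wedge *_t dh_t) = f_t^*(\langle dh_t, d\vartheta_t\rangle \vol_t)$, which is the second identity.

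The main obstacle is the computation of $i_{v_t}\Omega$: one must correctly exploit the $(n,0)$ type of $\Omega$ together with the fact that $J$ maps the tangent bundle of $\Lambda_t$ to its normal bundle, and be meticulous with factors of $\sqrt{-1}$ and the sign convention for the Laplacian. Once this is set up, separating real and imaginary parts of a single identity delivers both formulas at once.
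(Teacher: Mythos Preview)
The paper does not prove this lemma; it simply cites Thomas--Yau~\cite{TY02}. Your approach---differentiating the identity $f_t^*\Omega = e^{\sqrt{-1}\vartheta_t\circ f_t}f_t^*\vol_t$ via Cartan's formula and separating real and imaginary parts---is exactly the standard argument and is essentially correct.

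There is one genuine slip in the sign bookkeeping. You assert that the paper uses $\Delta = -d^*d$, but the computation in Proposition~\ref{pr:svv} (passing from $\int h\, d^*\!d\Delta h\,\vol$ to $\int |\Delta h|^2\vol$) shows the paper takes $\Delta = d^*d$, the positive Hodge Laplacian. With that convention one has $d*_t dh_t = -(\Delta_t h_t)\vol_t$, so the second term in your displayed expression for $d\,i_{v_t}\Omega$ should be $+\sqrt{-1}\,e^{\sqrt{-1}\vartheta_t\circ f_t}f_t^*((\Delta_t h_t)\vol_t)$ rather than $-\sqrt{-1}(\cdots)$. As written, your imaginary part would actually yield $\tfrac{d}{dt}(\vartheta_t\circ f_t) = -\Delta h_t$; two sign errors are silently cancelling. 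Once the convention is corrected, your argument goes through cleanly and gives both formulas.
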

The following corollary is immediate.
\begin{cy}\label{cy:fvv}
The first variation of $\Vol$ is given by
\[
\frac{d}{dt} \Vol(\Lambda_t) = \int_{\Lambda_t} \langle dh_t,d\vartheta_t\rangle \vol = \int_{\Lambda_t} h_t\, d^*\!d\vartheta_t \vol.
\]
In particular, $\Lambda$ is a critical point of $\Vol$ if and only if $d\vartheta$ is harmonic.
\end{cy}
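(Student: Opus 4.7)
The plan is to read the corollary off the preceding lemma by a single integration-by-parts. Because $f_t : L \to X$ is an embedding with image $\Lambda_t$ and compatible orientations, we have $\Vol(\Lambda_t) = \int_L f_t^*\vol_t.$ Differentiating under the integral and invoking the second identity of the lemma,
\[
\frac{d}{dt}\Vol(\Lambda_t) = \int_L \frac{d}{dt} f_t^*\vol_t = \int_L f_t^*\bigl(\langle dh_t,d\vartheta_t\rangle \vol_t\bigr) = \int_{\Lambda_t}\langle dh_t,d\vartheta_t\rangle\vol_t,
\]
which is the first claimed equality.

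Next I would apply the fact that on the closed oriented Riemannian manifold $\Lambda_t,$ the codifferential $d^*$ is the formal $L^2$-adjoint of $d.$ Thus
\[
\int_{\Lambda_t} \langle dh_t, d\vartheta_t\rangle \vol_t = \int_{\Lambda_t} h_t\, d^*d\vartheta_t\,\vol_t,
\]
which gives the second identity. A small point to address here is that $h_t$ is only determined up to a constant (since $\Lambda_t$ is compact); this ambiguity is harmless because Stokes' theorem yields $\int_{\Lambda_t} d^*d\vartheta_t\,\vol_t = 0,$ so the right-hand side does not depend on the choice of $h_t.$

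For the final assertion, note that as $h_t$ ranges over $C^\infty(\Lambda_t),$ its differential $dh_t$ ranges over all exact $1$-forms on $\Lambda_t,$ which by Lemma~\ref{lm:un} are precisely the admissible tangent vectors to exact Lagrangian paths through $\Lambda_t.$ Thus $\Lambda$ is a critical point of $\Vol$ if and only if $\int_{\Lambda_t} h_t\, d^*d\vartheta_t\,\vol_t$ vanishes for every smooth $h_t,$ which by the fundamental lemma of the calculus of variations is equivalent to $d^*d\vartheta_t = 0.$ Since $\vartheta_t$ is locally a function, the $1$-form $d\vartheta_t$ is automatically closed, so coclosedness is equivalent to harmonicity. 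There is no significant obstacle: the argument is a direct calculation from the lemma together with an elementary integration-by-parts.
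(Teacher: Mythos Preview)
Your proposal is correct and is precisely the argument the paper has in mind: the paper offers no proof beyond the sentence ``The following corollary is immediate,'' and what you have written is exactly the intended immediate deduction from the preceding lemma together with the adjointness of $d$ and $d^*$. Your remarks on the constant ambiguity in $h_t$ and the automatic closedness of $d\vartheta_t$ are apt refinements; the reference to Lemma~\ref{lm:un} for the identification of admissible variations is slightly tangential (the point is simply that exact $1$-forms exhaust the tangent directions along exact paths), but it does no harm.
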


\begin{pr}\label{pr:svv}
Suppose that $\Lambda_0$ is a critical point of $\Vol.$ Then
\[
\left.\frac{d^2}{dt^2} \Vol(\Lambda_t)\right|_{t=0} = \int_{\Lambda_0}|\Delta h_0|^2 \vol.
\]
\end{pr}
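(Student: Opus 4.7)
The plan is to derive the claim by twice differentiating the length functional, using the first-variation identity from Corollary~\ref{cy:fvv} in its self-adjoint form and pulling everything back to $L$ via the lift $f_t$ with $\tfrac{d}{dt}f_t=-J\nabla h_t$.

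First, rewrite the first variation by integrating by parts against the self-adjoint Laplace--Beltrami operator $\Delta_t=d^*_t d$:
\[
\frac{d}{dt}\Vol(\Lambda_t)=\int_{\Lambda_t}h_t\,\Delta_t\vartheta_t\,\vol_t.
\]
Setting $H(t)=h_t\circ f_t$, $\Theta(t)=\vartheta_t\circ f_t$, $\Phi(t)=(\Delta_t\vartheta_t)\circ f_t$, and $d\mu_t=f_t^*\vol_t$, this reads
\[
\frac{d}{dt}\Vol(\Lambda_t)=\int_L H(t)\,\Phi(t)\,d\mu_t.
\]

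Second, note that the critical-point hypothesis is exactly $\Phi(0)\equiv 0$ on $L$. Differentiating this integral in $t$ at $t=0$ by Leibniz, the two terms in which $\partial_t$ hits $H(t)$ or $d\mu_t$ are pointwise multiplied by $\Phi(0)=0$ and vanish, so
\[
\left.\frac{d^2}{dt^2}\Vol(\Lambda_t)\right|_{t=0}=\int_L H(0)\,\dot\Phi(0)\,d\mu_0.
\]

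Third, I would compute $\dot\Phi(0)$ using the identity $\dot\Theta(0)=\Delta_0 h_0$ (the first formula of the lemma preceding Corollary~\ref{cy:fvv}, pulled back to $L$). Regarding $\Phi(t)$ as the Laplace--Beltrami operator of the pullback metric $g(t)=f_t^*g|_{\Lambda_t}$ applied to $\Theta(t)$, I would argue that the critical-point condition $\Delta_0\Theta(0)=0$ causes the contribution of the $t$-derivative of the operator $\Delta_{g(t)}$ to drop out of the integral against $H(0)\,d\mu_0$, yielding
\[
\dot\Phi(0)\ \text{contributes}\ \Delta_0\dot\Theta(0)=\Delta_0^2 H(0).
\]
Self-adjointness of $\Delta_0$ against $d\mu_0$ then gives
\[
\int_L H(0)\,\Delta_0^2 H(0)\,d\mu_0=\int_L (\Delta_0 H(0))^2\,d\mu_0=\int_{\Lambda_0}|\Delta h_0|^2\,\vol_0,
\]
which is the stated formula.

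The main technical obstacle is the third step: showing that the cross-term $\int_L H(0)(\partial_t|_0\Delta_{g(t)})\Theta(0)\,d\mu_0$ does not add an extraneous piece to $\ddot\Vol(0)$. The necessary inputs for this cancellation are the explicit horizontal-type lift $\tfrac{d}{dt}f_t=-J\nabla h_t$, the critical-point identity $\Delta_0\vartheta_0=0$ (via the adjoint identity $\int H\,\Delta_{g(t)}\Theta(0)\,d\mu_{g(t)}=\int\langle dH,d\Theta(0)\rangle_{g(t)}\,d\mu_{g(t)}$ differentiated at $t=0$), and the Lagrangian form of the variation $\dot g(0)$ (which uses that the second fundamental form $II(X,Y)$ of a Lagrangian in a K\"ahler manifold takes the form $JC(X,Y)$ with $C$ totally symmetric and trace $\nabla\vartheta_0$). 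Carrying this cancellation out carefully is the crux of the proof.
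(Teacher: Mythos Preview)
Your approach is exactly the paper's: differentiate the first-variation formula $\int_{\Lambda_t} h_t\, d^*d\vartheta_t\, \vol_t$, drop the terms multiplied by $d^*d\vartheta_0 = 0$, invoke $\dot\vartheta_t\circ f_t = (\Delta_t h_t)\circ f_t$ from the preceding lemma, and move one $\Delta$ across by self-adjointness. The paper's proof is three lines and passes directly from $\left.\tfrac{d}{dt}\right|_{0}\int_{\Lambda_t} h_t\,d^*d\vartheta_t\,\vol$ to $\int_{\Lambda_0} h_0\,d^*d\Delta h_0\,\vol$ without isolating or addressing the metric-variation cross-term $\int H(0)\,(\partial_t|_0\Delta_{g(t)})\Theta(0)\,d\mu_0$ that you single out. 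So what you call the ``main technical obstacle'' is precisely the step the paper leaves implicit; your proposed machinery (the totally symmetric cubic form of a Lagrangian second fundamental form and the identity $\vec H = J\nabla\vartheta$) goes beyond what the paper supplies. Note that criticality here only yields $d^*d\vartheta_0=0$, not $d\vartheta_0=0$, so this cross-term is not trivially zero and your caution about it is justified rather than excessive.
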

\begin{proof}
Since $\Lambda_0$ is critical, using Corollary~\ref{cy:fvv} and equation~\eqref{eq:vth}, we calculate
\begin{align*}
\left.\frac{d^2}{dt^2} \Vol(\Lambda_t)\right|_{t=0} &= \left.\frac{d}{dt}\right|_{t=0} \int_{\Lambda_t} h_t\, d^*\!d\vartheta_t \vol \\
&= \int_{\Lambda_t} h_0\, d^*\!d\Delta h_0 \vol \\
& = \int_{\Lambda_t} |\Delta h_0|^2 \vol.
\end{align*}
\end{proof}

\subsection{Mirror symmetry}\label{ssec:mirror}
Kontsevich's homological mirror symmetry conjecture~\cite{Ko95} asserts that the Fukaya category of a Calabi-Yau manifold $X$ is equivalent to the derived category of coherent sheaves of its mirror Calabi-Yau manifold $X^\vee.$ In particular, Lagrangian submanifolds of $X$ should be related to holomorphic vector bundles on $X^\vee.$ Carrying the analogy further, special Lagrangian submanifolds should be related by mirror symmetry to Einstein-Hermitian metrics on holomorphic vector bundles. See \cite{Fu01,Th01,Do02} for more details. In the following, we discuss mirror analogs of the functionals $\Vol$ and $\CC$ for metrics on holomorphic vector bundles.

Let $(X,J,\omega)$ be a compact K\"ahler $n$-manifold and denote the K\"ahler metric on $X$ by $g.$ Let $E \to X$ be a rank $r$ holomorphic vector bundle and let $H$ be a Hermitian metric on $E$ with Chern connection $D$ and curvature $F.$ Denote by $D = D' + D''$ the decomposition of $D$ by type. Let $\delta = \delta' + \delta''$ be the dual of $D$ with respect to the metrics $H$ and $g.$ Let
\[
\Lambda : A^*(X) \to A^*(X)[-2]
\]
be the dual of exterior multiplication by $\omega$ and write $\hat F = \Lambda F.$ The Yang-Mills functional is given by
\[
\I(H) = \frac{1}{2} \int_X |F|^2 \omega^n.
\]
We summarize the properties of $\I$ relevant to our discussion referring the reader to~\cite[Chapter IV]{Ko87} for proofs. Our discussion also draws on~\cite{Do85}. Let
\begin{align*}
\lambda &= -2n\pi\sqrt{-1}\frac{\int_X c_1(E)\wedge \omega^{n-1}}{r\int_X \omega^n}, \\
\kappa & = 2\frac{\left(n\pi\int_X c_1(E) \wedge \omega^{n-1}\right)^2}{\left(r\int_X \omega^n\right)} + 2 \pi^2 n (n-1) \int_X (2c_2(E) - c_1(E)^2)\omega^{n-2}.
\end{align*}
Then $\I$ satisfies the topological lower bound
\begin{equation}\label{eq:bog}
\I(H) = \frac{1}{2}\int_X |\hat F - \lambda \id_E|^2 \omega^n + \kappa \geq \kappa,
\end{equation}
with equality if and only if $\hat F - \lambda \id_E = 0,$ that is, $H$ is Einstein. Observe the analogy between lower bounds~\eqref{eq:mbog} for $\Vol$ and~\eqref{eq:bog} for $\I.$ Furthermore, we have the following variational formulae analogous to Corollary~\ref{cy:fvv} and Proposition~\ref{pr:svv}. Let $H_t$ be a family of metrics on $E$ and set $v_t = H_t^{-1}\frac{d H_t}{dt}.$ Let $D_t$ be the Chern connection of $H_t$ and let $F_t$ be the curvature.
\begin{lm}
The first variation of $\I$ is given by
\[
\frac{d}{dt}\I(H_t) = \sqrt{-1}\int_X \langle D'v_t,D'\hat F_t\rangle \omega^n.
\]
Moreover, $H$ is a critical point for $\I$ if and only if $\hat F$ is parallel with respect to the Chern connection $D$ of $H.$
\end{lm}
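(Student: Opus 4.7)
The plan is to reduce the computation of $\frac{d}{dt}\I(H_t)$ to a formula involving $\dot{\hat F_t}$ and then exploit the K\"ahler identities. Using the Bogomolov identity~\eqref{eq:bog}, one has
\[
\I(H_t) = \frac{1}{2}\int_X |\hat F_t - \lambda\,\id_E|^2 \omega^n + \kappa,
\]
where $\lambda$ and $\kappa$ are topological constants. The cross term contributes a multiple of $\int_X \tr F_t \wedge \omega^{n-1}$, which represents $c_1(E)\cdot[\omega]^{n-1}$ up to a universal factor and is hence independent of $t$. Only $\frac{1}{2}\frac{d}{dt}\int_X |\hat F_t|^2 \omega^n$ remains. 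Since $\hat F_t$ is $H_t$-Hermitian, $|\hat F_t|^2 = \tr(\hat F_t^2)$ is given by the fiberwise trace alone, so differentiating under the integral yields $\int_X \tr(\hat F_t\,\dot{\hat F_t})\,\omega^n$.

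I would next compute $\dot F_t$. In a local holomorphic frame, the Chern connection form is $\theta_t = H_t^{-1}\partial H_t$, and a direct calculation using $\dot H_t = H_t v_t$ gives $\dot\theta_t = D_t' v_t$, hence $\dot F_t = \bar\partial\,\dot\theta_t = D_t'' D_t' v_t$. Thus $\dot{\hat F_t} = \Lambda D_t'' D_t' v_t$. The K\"ahler identity $[\Lambda, D_t''] = \sqrt{-1}\,\delta_t'$ on endomorphism-valued forms, applied to the $(1,0)$-form $D_t' v_t$ (on which $\Lambda$ vanishes for type reasons), yields
\[
\Lambda D_t'' D_t' v_t = \sqrt{-1}\,\delta_t' D_t' v_t.
\]
Integration by parts against $\hat F_t$, together with the identification of the fiberwise trace pairing with the Hermitian inner product (valid because $\hat F_t$ is self-adjoint), then gives
\[
\frac{d}{dt}\I(H_t) = \sqrt{-1}\int_X \langle D_t' v_t,\, D_t'\hat F_t\rangle\,\omega^n,
\]
as desired.

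For the critical point assertion, vanishing of the first variation for all Hermitian $v_t$ yields, after a further integration by parts, $\delta_t' D_t' \hat F_t = 0$. Pairing with $\hat F_t$ shows $\|D_t'\hat F_t\|_{L^2}^2 = 0$, so $D_t'\hat F_t = 0$. Because $\hat F_t$ is Hermitian and the Chern connection preserves the metric, this is equivalent to $D_t''\hat F_t = 0$, and therefore to $D_t\hat F_t = 0$, as claimed. I expect the main obstacle to be bookkeeping: one must carefully reconcile the fiberwise trace with the Hermitian $L^2$ inner product on endomorphism-valued forms, and track sign conventions in the K\"ahler identity to ensure the prefactor $\sqrt{-1}$ emerges with the correct sign.
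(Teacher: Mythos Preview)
The paper does not prove this lemma; the surrounding text explicitly refers the reader to \cite[Chapter IV]{Ko87} for proofs of all the stated properties of $\I$. Your outline is essentially the standard argument one finds there: differentiate the Bogomolov form~\eqref{eq:bog} of $\I$, use $\dot F_t = D_t'' D_t' v_t$, apply the K\"ahler identity $[\Lambda,D_t'']=\sqrt{-1}\,\delta_t'$ to the $(1,0)$-form $D_t'v_t$, and integrate by parts. So there is nothing to compare against, and your approach is the expected one.

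One concrete correction within your bookkeeping: $\hat F_t$ is \emph{skew}-Hermitian with respect to $H_t$, not Hermitian. This is visible already from the paper's formula for $\lambda$, which is purely imaginary, together with the Einstein condition $\hat F = \lambda\,\id_E$. Consequently $|\hat F_t|^2 = -\tr(\hat F_t^2)$, and the passage from $\tr(\hat F_t\,\dot{\hat F_t})$ to the Hermitian pairing $\langle\,\cdot\,,\,\cdot\,\rangle$ picks up a sign. This sign, combined with the sign convention in the K\"ahler identity, is what produces the prefactor $\sqrt{-1}$ rather than $-\sqrt{-1}$; you correctly flag this as the delicate point, but your statement that $\hat F_t$ is Hermitian would lead you to the wrong sign if taken at face value. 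Otherwise the argument is sound.
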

\begin{pr}
Suppose $H = H_0$ is a critical point of $\I.$ Then
\[
\left.\frac{d^2}{dt^2} \I(H_t)\right|_{t = 0} = \int_X |\delta'D' v|^2 \omega^n.
\]
\end{pr}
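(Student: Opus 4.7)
The plan is to differentiate the first-variation identity of the preceding lemma and invoke the critical-point hypothesis $D'\hat F_0 = 0$ to kill all but one term. Writing
\[
\frac{d}{dt}\I(H_t) = \sqrt{-1}\int_X \langle D'_t v_t,\, D'_t \hat F_t\rangle_{H_t}\,\omega^n
\]
and applying the Leibniz rule at $t=0$, any contribution that differentiates a factor other than $D'_t\hat F_t$ is paired against $D'_0\hat F_0 = 0$ and so vanishes. Hence
\[
\left.\frac{d^2}{dt^2}\I(H_t)\right|_{t=0} = \sqrt{-1}\int_X \left\langle D'v,\; \left.\frac{d}{dt}\right|_{t=0}(D'_t\hat F_t)\right\rangle\omega^n.
\]

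Next, I would compute $\dot{\hat F}|_{t=0}$ via the K\"ahler identities. In a local holomorphic frame, $A_t = H_t^{-1}\partial H_t$, and direct differentiation of $v_t = H_t^{-1}\dot H_t$ gives $\dot A_t = D'_t v_t$. Since the Chern connection on a holomorphic bundle has $F_t = D''_t A_t$ (its $(2,0)$-part vanishes), this yields $\dot F|_{t=0} = D''D'v$. Applying the K\"ahler identity $[\Lambda, D''] = -\sqrt{-1}\,\delta'$ to the $(1,0)$-form $D'v$ and noting that $\Lambda(D'v) = 0$ for bidegree reasons, I obtain
\[
\left.\dot{\hat F}\,\right|_{t=0} = \Lambda\, D''D'v = -\sqrt{-1}\,\delta'D'v.
\]
Combined with $\dot{D'_t}|_{t=0} = [D'v,\cdot\,]$ on $\mathrm{End}(E)$-valued sections, this gives
\[
\left.\frac{d}{dt}\right|_{t=0}(D'_t\hat F_t) = [D'v,\hat F_0] - \sqrt{-1}\,D'\delta'D'v.
\]

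At a critical point of $\I$ one has the Hermite-Einstein condition $\hat F_0 = \lambda\cdot\id_E$, so the commutator $[D'v,\hat F_0]$ vanishes pointwise. Substituting and integrating by parts via the adjointness of $D'$ and $\delta'$ yields
\[
\left.\frac{d^2}{dt^2}\I(H_t)\right|_{t=0} = \int_X \langle D'v,\, D'\delta'D'v\rangle\,\omega^n = \int_X |\delta'D'v|^2\,\omega^n,
\]
which is the claimed formula. The main obstacle is the vanishing of the commutator term $[D'v,\hat F_0]$. If one reads ``critical point'' only as $D\hat F_0 = 0$ (so $\hat F_0$ is parallel but not necessarily scalar), one rewrites $[D'v,\hat F_0] = D'[v,\hat F_0]$ using $D'\hat F_0 = 0$ and integrates by parts to reduce the offending integral to $\sqrt{-1}\int_X \langle \delta'D'v,[v,\hat F_0]\rangle\,\omega^n$; this residual expression is then handled by a symmetry argument exploiting the Hermitian symmetries $v^{*_H}=v$ and $\hat F_0^{*_H}=\hat F_0$, together with the conjugation rules $(D'v)^{*_H}=D''v$ and $(\delta'D'v)^{*_H} = \delta''D''v$. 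That symmetry check, rather than any of the differential calculus above, is the only genuinely technical point.
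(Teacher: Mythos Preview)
The paper does not give its own proof of this proposition; in the paragraph introducing $\I$ it explicitly ``refer[s] the reader to~\cite[Chapter IV]{Ko87} for proofs.'' So there is no in-paper argument to compare against. Your strategy---differentiate the first-variation identity, use the critical-point hypothesis to kill all but the $\frac{d}{dt}(D'_t\hat F_t)$ term, and then invoke the K\"ahler identity $[\Lambda,D''] = -\sqrt{-1}\,\delta'$---is exactly the standard route one finds in Kobayashi's book, and the differential calculus you write out is correct.

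The genuine gap is in how you dispose of the commutator term $[D'v,\hat F_0]$. Your primary argument asserts that at a critical point one has the Hermite--Einstein condition $\hat F_0 = \lambda\,\id_E$, but the paper's own preceding lemma says only that $H$ is critical for $\I$ iff $\hat F$ is \emph{parallel}; the Hermite--Einstein metrics are the absolute minima, not all critical points. Your fallback ``symmetry argument'' is only sketched, and it contains a sign error: from the paper's normalization $\lambda \in \sqrt{-1}\,\R$ one sees that $\hat F_0$ is \emph{skew}-Hermitian with respect to $H$, not Hermitian as you wrote, so the conjugation rules you list need adjusting. With $\hat F_0^{*_H} = -\hat F_0$ and $v^{*_H}=v$ one does get that $[v,\hat F_0]$ is Hermitian, but $\delta'D'v$ is not, and the residual integral $\sqrt{-1}\int_X\langle \delta'D'v,[v,\hat F_0]\rangle\,\omega^n$ does not vanish by symmetry alone. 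One clean way through is to use the parallel splitting $E=\bigoplus E_i$ into eigenbundles of $\sqrt{-1}\hat F_0$ (available precisely because $D\hat F_0=0$) and compute block by block; alternatively, one can start from the identity $\I(H_t)=\tfrac12\int_X|\hat F_t-\lambda\id|^2\omega^n+\kappa$ and use that $|\hat F_t-\lambda\id|^2_{H_t}=-\tr\bigl((\hat F_t-\lambda\id)^2\bigr)$ is metric-independent, which recasts the second variation so that the commutator contribution is visibly controlled. Either way, this step needs to be carried out rather than gestured at.
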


We turn to the functional on the space of metrics introduced by Donaldson in~\cite{Do85,Do87}. We omit proofs referring the reader to the original papers as well as~\cite[Chapter VI]{Ko87}. Let $H = \{H_t\}_{t \in [0,1]}$ be a path in the space of Hermitian metrics on $E$ and write $v_t = H_t^{-1} \frac{dH_t}{dt}.$
\begin{pr}\label{pr:M}
The functional
\[
\MM(H) = 2\sqrt{-1}\int_0^1 \int_X \tr\left(v_t \left(F_t - \frac{\lambda}{n}\omega\id_E\right)\right)\wedge \frac{\omega^{n-1}}{(n-1)!}
\]
depends only on the endpoint preserving homotopy class of $H.$
\end{pr}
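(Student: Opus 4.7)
The plan is to follow the strategy of the proof of Theorem~\ref{tm:main}: exhibit $\MM$ as the integral along the path $H$ of a $1$-form $\Upsilon$ on the space of Hermitian metrics on $E$, and show that $\Upsilon$ is closed. Identify the tangent space at a metric $H$ with Hermitian endomorphisms $v$ via $v = H^{-1}\delta H$, and define
\[
\Upsilon_H(v) = 2\sqrt{-1}\int_X \tr\!\left(v\left(F_H - \tfrac{\lambda}{n}\omega\,\id_E\right)\right)\wedge \frac{\omega^{n-1}}{(n-1)!}.
\]
Then $\MM(H) = \int_0^1 \Upsilon_{H_t}(v_t)\,dt$, and the conclusion of the proposition is equivalent to $d\Upsilon = 0$.

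To verify this, pull back $\Upsilon$ under an arbitrary smooth two-parameter family $H_{s,t}$, set $v = H^{-1}\partial_t H$ and $w = H^{-1}\partial_s H$, write the pullback as $\eta = \eta_t\,dt + \eta_s\,ds$, and compute $\partial_s \eta_t - \partial_t \eta_s$. The computation needs four ingredients: the commutation identity $\partial_s v - \partial_t w = [v,w]$, whose derivation is a direct analog of Lemma~\ref{lm:ban}; the standard variational formula $\partial_t F_{H_{s,t}} = D''D'v$ for the curvature, where $D = D' + D''$ is the Chern connection of $H_{s,t}$; the Bianchi identity $D''F = 0$, which holds because $F$ is of type $(1,1)$; and integration by parts on the closed manifold $X$, using $d\omega^{n-1} = 0$, to rewrite expressions of the form $\int_X \tr(v\,D''D'w)\wedge\omega^{n-1}$ as $-\int_X \tr(D''v\wedge D'w)\wedge\omega^{n-1}$.

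Differentiating, the contribution from the constant piece $-(\lambda/n)\omega\,\id_E$ drops out because it produces only $\int_X \tr(\partial_s v - \partial_t w)\,\omega^n = \int_X \tr[v,w]\,\omega^n = 0$. The surviving terms combine to
\[
\frac{2\sqrt{-1}}{(n-1)!}\int_X \tr\!\bigl([v,w]F + v\,D''D'w - w\,D''D'v\bigr)\wedge \omega^{n-1},
\]
and applying the integration-by-parts identity together with the operator relation $D'D'' + D''D' = [F,\,\cdot\,]$ on $\mathrm{End}(E)$-valued sections (the curvature of the induced connection on $\mathrm{End}(E)$) shows the three terms cancel. This step is the exact analog of Lemma~\ref{lm:ob}. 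Homotopy invariance then follows by Stokes' theorem, as in the proof of Theorem~\ref{tm:main}.

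The main obstacle is the integration-by-parts bookkeeping: one must track the $(p,q)$-bidegrees of the $\mathrm{End}(E)$-valued forms $D'v$, $D''v$, $D''D'v$, and so on; keep signs straight under the graded commutativity $\tr(\alpha\wedge\beta) = (-1)^{|\alpha||\beta|}\tr(\beta\wedge\alpha)$; and verify that only the components of $Dv\wedge D'w$ of total type $(1,1)$ survive the wedge with $\omega^{n-1}$. No geometric input beyond the four ingredients above is required; the logical structure is identical to that of Theorem~\ref{tm:main}, with $v$, $w$, $F$, $D'$, $D''$ playing the roles of $\xi_{s,t}$, $\zeta_{s,t}$, $\beta$, $i_{\xi_{s,t}}$, $i_{\zeta_{s,t}}$.
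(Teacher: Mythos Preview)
The paper does not prove Proposition~\ref{pr:M}. Immediately before stating it, the author writes: ``We omit proofs referring the reader to the original papers as well as~\cite[Chapter VI]{Ko87}.'' So there is nothing to compare your argument against within this paper.

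That said, your outline is correct and is essentially the standard proof one finds in Donaldson's papers and in Kobayashi's book. The identity $\partial_s v - \partial_t w = [v,w]$ follows from differentiating $v = H^{-1}\partial_t H$; the variation $\partial_t F = D''D'v$ is the standard formula for the Chern curvature; and after two integrations by parts one reduces the obstruction to
\[
\int_X \tr\bigl([v,w]F + v\,D''D'w - w\,D''D'v\bigr)\wedge\omega^{n-1},
\]
which vanishes by the identity $D'D'' + D''D' = [F,\cdot]$ on $\mathrm{End}(E)$ together with trace cyclicity, exactly as you say. Your remark that this mirrors the structure of Lemma~\ref{lm:ob} and Theorem~\ref{tm:main} is precisely the point of Section~\ref{ssec:mirror}: the author is presenting $\MM$ as the known mirror analog of $\CC$, not proving anything new about it.
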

Observe the parallel between Proposition~\ref{pr:M} for $\MM$ and Theorem~\ref{tm:main} for $\CC.$ More precisely, $(F_t - \frac{\lambda}{n} \omega\id_E) \wedge \frac{\omega^{n-1}}{(n-1)!}$ is the moment map of the action of the unitary gauge group on the space of connections of $E$ with respect to the symplectic form
\[
(a,b) \mapsto \int_X \tr(a \wedge b)\wedge\frac{\omega^{n-1}}{(n-1)!}, \qquad a,b \in A^1(End(E)).
\]
The functional $\MM$ is obtained by integrating the moment map along a path of complex gauge transformations, or equivalently, a path of metrics. On the other hand, Thomas~\cite{Th01} has sketched a mirror picture in which $\im \Omega$ is the moment map and an exact path of Lagrangian submanifolds corresponds to a path of metrics. So, the definition of $\CC$ is completely analogous to the definition of $\MM.$ The author plans to discuss the relation between $\CC$ and symplectic reduction at greater length in future work.

We proceed to the variational formulae of $\MM,$ which are analogous to Corollary~\ref{cy:sl} and Proposition~\ref{pr:svc}. Since the space of Hermitian metrics on $E$ is contractible, we write $\MM(H) = \MM(H_0,H_1).$ Fix a reference metric $K$ and a family $H_s$ with velocity vector $v_s = H_s^{-1}\frac{dH_s}{ds}$ and curvature $F_s.$
\begin{lm}
We have
\[
\frac{d}{ds}\MM(K,H_s) = 2\sqrt{-1}\int_X \tr\left(v_s \left(F_s - \frac{\lambda}{n}\omega\id_E\right)\right)\wedge \frac{\omega^{n-1}}{(n-1)!}.
\]
So, a metric $H$ is a critical point of $\MM$ if and only if it is Einstein.
\end{lm}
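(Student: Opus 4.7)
The approach is to exploit Proposition~\ref{pr:M} to view $\MM(K,\cdot)$ as a well-defined functional on the space of Hermitian metrics, and to recognize its differential as the 1-form appearing in the integrand of $\MM$. The argument runs in complete parallel to Proposition~\ref{pr:fv} for $\CC$: a functional defined by a path integral whose integrand is closed (in the sense of homotopy invariance) automatically descends to a function of the endpoint, whose derivative along any tangent direction is the original integrand evaluated at the tangent vector.

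The first step is to observe the additivity of $\MM$ under concatenation: if $H^{(1)}, H^{(2)}$ are smooth paths with $H^{(1)}_1 = H^{(2)}_0$ and $H$ is their concatenation, reparametrized to $[0,1]$, then $\MM(H) = \MM(H^{(1)}) + \MM(H^{(2)})$. This is immediate from the definition, since the integrand transforms as $v_t\,dt = H_t^{-1}dH_t$, invariantly under reparametrization. Next I would fix $s_0$ and choose any smooth path $\tilde H$ from $K$ to $H_{s_0}$. For $s$ near $s_0$, concatenating $\tilde H$ with the sub-path $\{H_\sigma\}_{\sigma\in[s_0,s]}$ yields a path from $K$ to $H_s$; by Proposition~\ref{pr:M} and the contractibility of the space of metrics, the $\MM$-value of this concatenation equals $\MM(K, H_s)$. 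Additivity and the change of variables above give
\[
\MM(K,H_s) = \MM(\tilde H) + 2\sqrt{-1}\int_{s_0}^{s}\int_X \tr\!\left[v_\sigma\!\left(F_\sigma - \frac{\lambda}{n}\omega\,\id_E\right)\right]\wedge \frac{\omega^{n-1}}{(n-1)!}\, d\sigma.
\]
Since $\MM(\tilde H)$ is independent of $s$, the fundamental theorem of calculus produces the stated first-variation formula.

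For the critical point characterization, I would apply the Lefschetz identity $\alpha \wedge \frac{\omega^{n-1}}{(n-1)!} = (\Lambda\alpha)\frac{\omega^n}{n!}$ valid for any $(1,1)$-form $\alpha$ on a K\"ahler $n$-fold (the primitive component of $\alpha$ satisfies $\alpha_0\wedge\omega^{n-1}=0$ by Lefschetz). With $\alpha = F - \frac{\lambda}{n}\omega\,\id_E$ and $\Lambda\omega=n$, one gets $\Lambda\alpha = \hat F - \lambda\,\id_E$, so the first variation becomes
\[
2\sqrt{-1}\int_X \tr\!\left[v_s(\hat F - \lambda\,\id_E)\right] \frac{\omega^n}{n!}.
\]
A critical point satisfies this for every Hermitian endomorphism $v_s$. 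Pointwise non-degeneracy of the trace pairing on Hermitian endomorphisms then forces $\hat F - \lambda\,\id_E = 0$, which is the Einstein condition.

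The main obstacle is really just bookkeeping: ensuring the concatenated path can be smoothed (or working with piecewise smooth paths throughout) and verifying parametrization invariance. All the substantive content is absorbed into Proposition~\ref{pr:M}. An alternative, self-contained route would differentiate $\MM$ directly along a two-parameter family $\tilde H_{s,t}$ with $\tilde H_{s,0}=K$ and $\tilde H_{s,1}=H_s$, using the identities $\partial_s v_{s,t} - \partial_t w_{s,t} = [v_{s,t},w_{s,t}]$ and $\partial_s F_{s,t} = D''(D'w_{s,t})$ together with integration by parts in both $t$ and along $X$; this parallels the proofs of Lemma~\ref{lm:closed} and Proposition~\ref{pr:fv}, but is considerably more computational.
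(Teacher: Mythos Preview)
Your argument is correct. Note, however, that the paper does not actually prove this lemma: at the start of Section~\ref{ssec:mirror} the author states ``We omit proofs referring the reader to the original papers as well as~\cite[Chapter VI]{Ko87},'' so there is no in-paper proof to compare against. Your approach---using the homotopy invariance of Proposition~\ref{pr:M} together with additivity under concatenation and reparametrization invariance of $v_t\,dt = H_t^{-1}dH_t$, followed by the Lefschetz identity for the Einstein characterization---is the standard one and runs in exact parallel to the paper's own treatment of $\CC$ in Proposition~\ref{pr:fv}.
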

\begin{lm}
Suppose $H=H_0$ is a critical point of $\M$. Let $D$ be the Chern connection of $H$ and write $v = v_0.$ Then
\[
\left.\frac{d^2}{ds^2}\MM(K,H_s)\right|_{s = 0} = 2\int_X |D'v|^2 \frac{\omega^n}{n!} = \int_X |Dv|^2 \frac{\omega^n}{n!}.
\]
\end{lm}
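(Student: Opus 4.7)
The plan is to differentiate the first-variation formula of $\MM$ once more and evaluate at $s=0$, then integrate by parts to express the result as a squared norm. Starting from
\[
\frac{d}{ds}\MM(K,H_s) = 2\sqrt{-1}\int_X \tr\!\left(v_s\bigl(F_s - \tfrac{\lambda}{n}\omega\id_E\bigr)\right)\wedge \frac{\omega^{n-1}}{(n-1)!},
\]
I would differentiate under the integral. The derivative produces two terms: one in which $\dot v_s$ is paired against the moment-map expression $(F_s - \tfrac{\lambda}{n}\omega\id_E)\wedge\frac{\omega^{n-1}}{(n-1)!}$, and one in which $v$ is paired against $\dot F_s\wedge\frac{\omega^{n-1}}{(n-1)!}$. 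For any End-valued $(1,1)$-form $\alpha$ one has $\alpha\wedge\frac{\omega^{n-1}}{(n-1)!} = (\Lambda\alpha)\frac{\omega^n}{n!}$, so the moment-map factor equals $(\hat F_s - \lambda\id_E)\frac{\omega^n}{n!}$. At the critical point this vanishes by the preceding lemma, killing the $\dot v$-term entirely.

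The next step is to compute $\dot F_s|_{s=0}$. Writing the Chern connection locally as $D'_s = \partial + A'_s$ with $A'_s = H_s^{-1}\partial H_s$, a direct differentiation using $v_s = H_s^{-1}\dot H_s$ gives $\dot A'_s = \partial v_s + [A'_s,v_s] = D'v_s$. Since $D'' = \bar\partial$ is independent of the metric and $F_s$ is of type $(1,1)$, we obtain $\dot F_s = D''D'v_s$. Substituting into what is left of the second variation yields
\[
\left.\frac{d^2}{ds^2}\MM(K,H_s)\right|_{s=0} = 2\sqrt{-1}\int_X \tr\!\bigl(v\cdot D''D'v\bigr)\wedge \frac{\omega^{n-1}}{(n-1)!}.
\]

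Now I would integrate by parts. Because $\omega$ is closed and $\tr(v\cdot D'v)\wedge\omega^{n-1}$ is of type $(n,n-1)$, only the $\bar\partial$-component of $d$ contributes, and the Leibniz rule gives
\[
d\!\left[\tr(v\cdot D'v)\wedge\tfrac{\omega^{n-1}}{(n-1)!}\right] = \bigl[\tr(D''v\wedge D'v) + \tr(v\cdot D''D'v)\bigr]\wedge\tfrac{\omega^{n-1}}{(n-1)!}.
\]
Integrating over the compact $X$, Stokes' theorem replaces $\tr(v\cdot D''D'v)$ by $-\tr(D''v\wedge D'v)$ inside the integral. At this point the remaining task is to identify the result with $|D'v|^2\,\omega^n/n!$: since $v$ is self-adjoint with respect to $H$, one checks in a local unitary frame that $D''v = (D'v)^*$, and a standard K\"ahler identity for $\End(E)$-valued $(1,0)$-forms gives
\[
-\sqrt{-1}\,\tr(D''v\wedge D'v)\wedge \tfrac{\omega^{n-1}}{(n-1)!} \;=\; |D'v|^2\,\tfrac{\omega^n}{n!}.
\]
Combined with the factor $2\sqrt{-1}$ this yields the first equality $\displaystyle\left.\tfrac{d^2}{ds^2}\MM\right|_{s=0} = 2\int_X |D'v|^2\,\omega^n/n!$. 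For the final equality, Hermitian symmetry of $v$ gives $|D''v|^2 = |D'v|^2$ pointwise, so $|Dv|^2 = |D'v|^2 + |D''v|^2 = 2|D'v|^2$, completing the identity.

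The main obstacle is bookkeeping: tracking the interplay of matrix multiplication, wedge product, and the Hermitian adjoint in $\tr(D''v\wedge D'v)$, and pinning down the correct sign and normalization in the K\"ahler identity that turns this trace into $|D'v|^2\,\omega^n/n!$. The conceptual content (moment-map term vanishes at a critical point, $\dot F = D''D'v$, Stokes, self-adjointness of $v$) is clean; everything else is a careful local computation in a unitary frame.
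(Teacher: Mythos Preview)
The paper does not prove this lemma: in Section~\ref{ssec:mirror} the author explicitly states ``We omit proofs referring the reader to the original papers as well as~\cite[Chapter VI]{Ko87}.'' So there is no proof in the paper to compare against. Your outline is the standard argument one finds in those references (differentiate the first variation, use the Einstein condition $\hat F-\lambda\id_E=0$ to kill the $\dot v$ term, use $\dot F = D''D'v$, integrate by parts, and invoke $H$-self-adjointness of $v$ for the final equality), and it is correct in substance; the only work remaining is exactly the sign and normalization bookkeeping you flag at the end.
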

The space $\herm^+(E)$ of positive definite Hermitian forms on $E$ is analogous to the exact isotopy class $\OO^+ \subset \L^+$ of Section~\ref{ssec:gc}. It carries a Riemannian metric defined at a point $H \in \herm^+(E)$ by the formula
\[
(\xi,\zeta) = \int_X \tr(H^{-1}\xi H^{-1}\zeta) \omega^n,\qquad \xi,\zeta \in T_H \herm^+(E).
\]
We have the following parallels of Definition~\ref{df:eg}, Proposition~\ref{pr:fvE}, and Proposition~\ref{pr:con}.
\begin{df}
Let $H : [a,b] \to \herm^+$ be a path, and let $v_t = H_t^{-1}\frac{d}{dt}H_t.$ The \emph{energy} of $H$ is given by
\[
\E(H) = \int_a^b \left(\frac{dH_t}{dt},\frac{dH_t}{dt}\right)dt = \int_a^b \int_X \tr(v_t^2) \omega^n dt.
\]
We call $H$ a \emph{geodesic} if $v_t$ is constant.
\end{df}
\begin{lm}
A path $H : [a,b] \to \herm^+(E)$ is a geodesic if and only if it is a critical point of the energy functional with respect to proper variations.
\end{lm}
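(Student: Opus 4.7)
The plan is to mirror the proof of Proposition~\ref{pr:fvE} but in the much more elementary setting of a vector bundle, where no Lagrangian neighborhood theorem or flux arguments are needed. Everything will reduce to a single integration by parts in $t$ once we understand how $v_t = H_t^{-1}\frac{d}{dt}H_t$ commutes with the analogous $s$-derivative.

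Let $H_{s,t}$ be a proper variation of the path $H_t = H_{0,t}$, i.e.\ $H_{s,a} = H_a$ and $H_{s,b} = H_b$ for all $s$. Set
\[
v_{s,t} = H_{s,t}^{-1}\frac{\partial H_{s,t}}{\partial t}, \qquad w_{s,t} = H_{s,t}^{-1}\frac{\partial H_{s,t}}{\partial s}.
\]
Both $v$ and $w$ are endomorphisms of $E$ that are self-adjoint with respect to $H_{s,t}$, and the properness of the variation gives $w_{s,a} = w_{s,b} = 0$. The first step is a direct calculation, using $\partial_s(H^{-1}) = -H^{-1}(\partial_s H)H^{-1}$ and the analogous formula in $t$, yielding the commutation relation
\[
\frac{\partial v}{\partial s} - \frac{\partial w}{\partial t} = [v,w].
\]
This is the analog of Lemma~\ref{lm:ban}/Lemma~\ref{lm:Hban} for the present setting.

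Next I differentiate the energy and use cyclic invariance of the trace. From the definition of $\E$,
\[
\frac{d}{ds}\E(H_{s,\cdot}) = 2\int_a^b\!\int_X \tr\!\left(v \,\frac{\partial v}{\partial s}\right)\omega^n\,dt.
\]
Substituting the commutation relation and observing that $\tr(v[v,w]) = \tr(v^2 w)-\tr(vwv) = 0$ by cyclicity collapses the integrand to $\tr(v\,\partial_t w)$. Integration by parts in $t$, together with the vanishing of $w$ at the endpoints, transforms this into
\[
\frac{d}{ds}\E(H_{s,\cdot}) = -2\int_a^b\!\int_X \tr\!\left(\frac{\partial v}{\partial t}\, w\right)\omega^n\,dt.
\]

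Finally, I conclude by a density/non-degeneracy argument. The pairing $(\xi,\zeta)\mapsto \int_X\tr(\xi\zeta)\omega^n$ on $H$-Hermitian endomorphisms is non-degenerate, and every compactly supported (in $t \in (a,b)$) section of $H$-Hermitian endomorphisms arises as some $w_{0,t}$ by integrating the ODE $\partial_s H = H w$ with initial condition $H_0 = H_t$. Hence the first variation vanishes for all proper variations if and only if $\partial_t v \equiv 0$, i.e.\ $v$ is constant in $t$, which is precisely the geodesic condition.

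The only step requiring any care is the bookkeeping with the commutator identity and the cyclicity cancellation; this is the direct analog of invoking Lemma~\ref{lm:Hban} and Lemma~\ref{lm:ob} in the proof of Proposition~\ref{pr:fvE}, but here it is purely algebraic rather than requiring the symplectic machinery, so the proof is substantially shorter than its Lagrangian counterpart.
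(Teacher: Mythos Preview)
Your argument is correct. The paper does not actually prove this lemma: Section~\ref{ssec:mirror} explicitly states ``We omit proofs referring the reader to the original papers as well as~\cite[Chapter VI]{Ko87}.'' So there is no proof in the paper to compare against; the result is quoted as background.

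Your proof is the standard one and is written cleanly. The commutator identity $\partial_s v - \partial_t w = [v,w]$ is exactly right, the cyclicity cancellation $\tr(v[v,w])=0$ is correct, and the integration by parts in $t$ goes through because $w$ vanishes at the endpoints. One minor point: the sentence about ``integrating the ODE $\partial_s H = Hw$'' is slightly more than you need and raises the side issue of whether the solution stays Hermitian positive definite; it is cleaner simply to take the linear variation $H_{s,t} = H_t + sH_t\eta_t$ for a given $H_t$-Hermitian $\eta_t$ compactly supported in $(a,b)$, which is positive for small $s$ and has $w_{0,t}=\eta_t$. With that tweak the non-degeneracy step is immediate.
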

\begin{lm}
Fix $K \in \herm^+(E),$ and let $H : [a,b] \to \herm^+(E)$ be a geodesic with $H_s^{-1}\frac{d}{ds}H_s = v.$ Then
\[
\frac{d^2}{ds^2} \MM(K,H_s) = \int_X |D'v|^2 \frac{\omega^n}{n!} \geq 0
\]
with equality if and only if $v$ is a holomorphic endomorphism of $E.$
\end{lm}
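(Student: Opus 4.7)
The plan is to differentiate the first variation formula from the previous lemma,
\[
\frac{d}{ds}\MM(K,H_s) = 2\sqrt{-1}\int_X \tr\left(v_s\left(F_s - \frac{\lambda}{n}\omega\id_E\right)\right)\wedge \frac{\omega^{n-1}}{(n-1)!},
\]
a second time along the given geodesic. The geodesic hypothesis is that $v_s \equiv v$ is independent of $s$, so the second derivative collects only the variation of $F_s$:
\[
\frac{d^2}{ds^2}\MM(K,H_s) = 2\sqrt{-1}\int_X \tr(v\cdot \dot F_s)\wedge \frac{\omega^{n-1}}{(n-1)!}.
\]

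Next, I would compute $\dot F_s$. Working in a local holomorphic frame, the Chern connection form is $\theta_s = H_s^{-1}\partial H_s$, whose $s$-derivative unfolds to $\dot\theta_s = \partial v + [\theta_s,v] = D'_s v$, viewed as an $\text{End}(E)$-valued $(1,0)$-form. Since $F_s = \bar\partial \theta_s$ and $D'' = \bar\partial$ is independent of $H_s$, differentiating yields $\dot F_s = D''D'v$. Substituting,
\[
\frac{d^2}{ds^2}\MM(K,H_s) = 2\sqrt{-1}\int_X \tr(v\cdot D''D'v)\wedge \frac{\omega^{n-1}}{(n-1)!}.
\]

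The final step is integration by parts. Since $D''$ is a derivation and $(D')^2 = F^{(2,0)} = 0$ for the Chern connection,
\[
D''\tr(v\cdot D'v) = \tr(D''v\wedge D'v) + \tr(v\cdot D''D'v),
\]
and wedging with $\omega^{n-1}$ (which is $d$-closed by the K\"ahler condition), Stokes' theorem gives
\[
\int_X \tr(v\cdot D''D'v)\wedge \omega^{n-1} = -\int_X \tr(D''v\wedge D'v)\wedge \omega^{n-1}.
\]
Since $v = v^*$, the standard identity $(D'v)^* = D''v^* = D''v$ converts the right side into $-\int_X \tr((D'v)^*\wedge D'v)\wedge \omega^{n-1}$. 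Applying the pointwise K\"ahler identity
\[
-2\sqrt{-1}\,\tr((D'v)^*\wedge D'v)\wedge \frac{\omega^{n-1}}{(n-1)!} = |D'v|^2\frac{\omega^n}{n!}
\]
delivers the desired formula, which is manifestly non-negative. Equality holds exactly when $D'v \equiv 0$, and then $D''v = (D'v)^* = 0$ as well, so $v$ is a holomorphic section of $\text{End}(E)$, i.e.\ a holomorphic endomorphism of $E$. The only delicate point is bookkeeping the signs and the factor of $2\sqrt{-1}$ in the K\"ahler identity; the constancy of $v$ along the geodesic eliminates what would otherwise be the most troublesome contribution when differentiating the variation formula a second time.
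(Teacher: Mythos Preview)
The paper does not give its own proof of this lemma: the discussion of $\MM$ explicitly states ``We omit proofs referring the reader to the original papers as well as [Kobayashi, Chapter VI].'' Your argument is precisely the standard computation found in those references (differentiate the first variation, use $\dot F_s = D''D'v$, integrate by parts, and invoke self-adjointness of $v$), and it is correct in substance.

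One remark on the bookkeeping you flag at the end. With the usual conventions, the pointwise identity is
\[
-\sqrt{-1}\,\tr\!\big((D'v)^*\wedge D'v\big)\wedge \frac{\omega^{n-1}}{(n-1)!} = |D'v|^2\,\frac{\omega^n}{n!},
\]
without the extra factor of $2$. Carrying this through your computation gives
\[
\frac{d^2}{ds^2}\MM(K,H_s) = 2\int_X |D'v|^2\,\frac{\omega^n}{n!},
\]
which matches the preceding lemma in the paper (the Hessian at a critical point). The discrepancy with the factor stated in the geodesic lemma appears to be a typo in the paper rather than an error in your reasoning; the two second-variation formulas must agree when both hypotheses hold. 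Your treatment of the equality case, deducing $D''v=(D'v)^*=0$ from $D'v=0$ and $v=v^*$, is exactly right.
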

\begin{rem}
If $L \subset X$ is a connected spin unobstructed Lagrangian submanifold, then the degree zero Floer cohomology $HF^0(L,L)$ is at most one-dimensional~\cite[Theorem~D]{FO09}. So $L$ is analogous under mirror symmetry to a holomorphic vector bundle $E \to X^\vee,$ with automorphisms only the scalar multiples of $\id_E.$ If $v$ in the preceding lemma is a multiple of $\id_E$, then $H_t$ is a real scalar multiple of $H_0.$
\end{rem}

\section{Lagrangian flux}\label{sec:flux}
Let $\Lambda : [0,1] \to \L$ be a piecewise smooth Lagrangian path and let $\ell : S^1 \to \Lambda_0$ be a loop. Let $f : [0,1] \to \X$ be a lifting of $\Lambda$ and let $\bar \ell : S^1 \times [0,1] \to X$ be the map defined by
\[
\bar\ell(u,t) = f_t(f_0^{-1}\circ \ell(u)).
\]
Define
\[
\flux(\Lambda,\ell) = \int_{S^1 \times [0,1]} \bar\ell^*\omega.
\]
\begin{lm}\label{lm:flux}
The functional $\flux(\Lambda,\ell)$ depends only on the end-point preserving homotopy class of $\Lambda$ and the class $\ell_*([S^1]) \in H_1(\Lambda_0).$
\end{lm}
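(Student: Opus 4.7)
The plan is to realize each of the three invariances (lifting, homology class of $\ell$, homotopy class of $\Lambda$) as an application of Stokes' theorem to a three-dimensional chain in $X,$ exploiting two facts: $d\omega = 0,$ and $\omega$ restricts to zero on any Lagrangian submanifold. In each case the boundary of the three-chain will consist of the two cylinders whose $\flux$ we wish to compare, together with auxiliary pieces that either degenerate or lie inside $\Lambda_0$ or $\Lambda_1,$ and hence contribute nothing.

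\emph{Independence of the lifting.} Two liftings $f,f':[0,1]\to \X$ of the same path $\Lambda$ differ by $f'_t = f_t \circ \psi_t$ with $\psi_t \in \G$ and $\psi_0 = \id_L.$ Define the three-chain
\[
\Phi : S^1 \times [0,1]\times[0,1] \to X, \qquad \Phi(u,t,s) = f_t\bigl(\psi_{st}(f_0^{-1}\ell(u))\bigr).
\]
Its boundary is $\bar\ell$ at $s = 0,$ the cylinder $\bar\ell'$ built from $f'$ at $s = 1,$ a degenerate face at $t = 0$ where $\Phi(u,0,s) = \ell(u),$ and a face at $t = 1$ lying entirely inside $\Lambda_1.$ Since $d\omega = 0$ and $\omega|_{\Lambda_1} = 0,$ Stokes' theorem equates $\int \bar\ell^*\omega$ and $\int \bar\ell'{}^*\omega.$

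\emph{Independence of the homology class of $\ell.$} If $\ell, \ell' : S^1 \to \Lambda_0$ satisfy $\ell_*[S^1] = \ell'_*[S^1]$ in $H_1(\Lambda_0),$ pick a singular $2$-chain $c$ in $\Lambda_0$ with $\partial c = \ell - \ell'$ and form the three-chain
\[
\Psi : c \times [0,1] \to X, \qquad \Psi(p,t) = f_t\bigl(f_0^{-1}(p)\bigr).
\]
Its boundary is the difference of cylinders $\bar\ell - \bar\ell'$ (from $\partial c \times [0,1]$), together with the $2$-chains $c$ and $f_1 \circ f_0^{-1}(c)$ lying in $\Lambda_0$ and $\Lambda_1$ respectively. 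Closedness of $\omega$ and vanishing on the Lagrangians give $\flux(\Lambda,\ell) = \flux(\Lambda,\ell').$

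\emph{Independence of the endpoint-preserving homotopy class of $\Lambda.$} Given a homotopy $\{\Lambda^s\}_{s \in [0,1]}$ of paths with $\Lambda^s_0 = \Lambda_0$ and $\Lambda^s_1 = \Lambda_1$ fixed, pick a joint smooth lifting $f^s_t$ (using the first step to normalize at $s = 0$ and $s = 1$ if needed). Consider
\[
\Xi : S^1 \times [0,1] \times [0,1] \to X, \qquad \Xi(u,t,s) = f^s_t\bigl((f^s_0)^{-1}\ell(u)\bigr).
\]
Its $s = 0$ and $s = 1$ faces are the two cylinders to be compared, the $t = 0$ face is degenerate (constant in $s$), and the $t = 1$ face lies in $\Lambda_1.$ Stokes' theorem concludes the argument.

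The main technical nuisance is bookkeeping of the lifting under variation and under the mere piecewise smoothness of $\Lambda$; the cleanest way around this is to subdivide $[0,1]$ into intervals on which everything is smooth and telescope, and to use Lemma~\ref{lm:lift} to stitch liftings together. Orientation and sign conventions on the various boundary faces require care, but once the three Stokes arguments above are set up, the closedness of $\omega$ and the vanishing $\omega|_{\Lambda_i} = 0$ make each cancellation immediate.
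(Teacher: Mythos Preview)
Your argument is correct and is essentially the paper's own proof unpacked: the paper observes that $\omega$ defines a relative class in $H^2(X,\Lambda_0\cup\Lambda_1;\R)$ and that $\bar\ell_*[C,\partial C]\in H_2(X,\Lambda_0\cup\Lambda_1)$ is well defined, then verifies the latter by writing down the same three-chain you call $\Phi$ (the paper's $\tilde\ell$) and declaring the remaining cases ``similar.'' One small point to tighten: two liftings need not satisfy $\psi_0=\id_L$, but the reduction is immediate since replacing $f'$ by $f'\circ\psi_0^{-1}$ leaves $\bar\ell'$ literally unchanged.
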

\begin{proof}
$\Lambda_0$ and $\Lambda_1$ are Lagrangian, so $\omega$ gives rise to a relative cohomology class $[\omega] \in H^2(X,\Lambda_0 \cup \Lambda_1;\R).$ Moreover, writing $C = S^1 \times [0,1],$ we check below that $\bar \ell_*([C,\partial C]) \in H_2(X,\Lambda_0\cup \Lambda_1)$ depends only on the end-point preserving homotopy class of $\Lambda$ and the homology class of $\ell.$ So, the lemma follows from the identity
\[
\flux(\Lambda,\ell) = [\omega] \cap \ell_*([C,\partial C]).
\]

It remains to check that $\overline \ell_*([C,\partial C])$ depends only on the end-point preserving homotopy class of $\Lambda$ and the homology class of $\ell.$ Indeed, suppose that $f_{0,},f_{1,}: [0,1] \to \X$ are liftings of $\Lambda$ and let $\bar \ell_0,\bar \ell_1,$ be the associated maps $S^1 \times [0,1] \to X.$ Let
\[
g_t = f_{0,0} \circ f_{0,t}^{-1}\circ f_{1,t} \circ f_{1,0}^{-1} : \Lambda_0 \longrightarrow \Lambda_0.
\]
Clearly, $g_0 = \id_{\Lambda_0}.$ Let $\tilde\ell : S^1 \times [0,1]^2 \to X$ be the smooth family of maps defined by
\[
\tilde \ell(u,t,s) = f_{0,t}\circ f_{0,0}^{-1} \circ g_{st} \circ \ell(u).
\]
Since $\tilde \ell(u,t,i) = \bar \ell_i(u,t)$ and $\tilde \ell(u,i,s) \in \Lambda_i$ for $i = 0,1,$ it follows that $(\bar \ell_0)_*([C,\partial C]) = (\bar \ell_1)_*([C,\partial C]).$
The proof of independence of other choices is similar, and we leave it to the reader.
\end{proof}

In light of Lemma~\ref{lm:flux} and the isomorphism
\[
H^1(\Lambda_0,\R) \simeq Hom(H_1(\Lambda_0,\R),\R),
\]
we define
\[
\flux([\Lambda])\in H^1(\Lambda_0,\R)
\]
by $\flux([\Lambda])(\ell_*([S^1])) = \flux(\Lambda,\ell).$

\begin{rem}
Let $(M,\omega_M)$ be a symplectic manifold, let $X = M \times M$ and $\omega = -\omega_M \boxplus \omega_M.$ Let $\phi : [0,1] \to \symp(M,\omega_M)$ be a path of symplectomorphisms with $\phi(0) = \id.$ Calabi~\cite{Ca70} defined
\[
\flux([\phi]) \in H^1(M;\R).
\]
See also~\cite{Ba78} and~\cite[Chapter 10]{MS98}. Let $\Lambda$ be the path in $\L(X,M,\cdot)$ corresponding to $\phi.$ It is easy to see that $\flux([\Lambda]) = \flux([\phi]).$
\end{rem}
Let $\{\Lambda_{s,}\}_{s\in [0,1]}$ be a smooth family of Lagrangian paths. Assume $\Lambda_{s,0} = \Lambda_0$ is fixed. Let $\alpha_{s,t} \in A^1(\Lambda_{s,t})$ be the family of $1$-forms given by
\[
\alpha_{s,t} = \frac{\partial}{\partial s} \Lambda_{s,t}.
\]
Let $f_{s,}$ be a lifting of $\Lambda_{s,}$ to $\X.$
\begin{lm}\label{lm:v1f}
We have
\[
\frac{\partial}{\partial s} \flux([\Lambda_{s,}]) = [(f_{s,0})_*f^*_{s,1}\alpha_{s,1}].
\]
\end{lm}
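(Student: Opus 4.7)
The plan is to differentiate the defining integral of $\flux([\Lambda_{s,}])(\ell_*[S^1])$ under the integral sign, apply Stokes' theorem, and show that only the boundary component $S^1\times\{1\}$ contributes, with the contribution being exactly the integral of $(f_{s,0})_*f_{s,1}^*\alpha_{s,1}$ over $\ell$. Fix a loop $\ell:S^1\to\Lambda_0$ and set $\bar\ell_s(u,t)=f_{s,t}\bigl(f_{s,0}^{-1}\circ\ell(u)\bigr)$, so that
\[
\flux([\Lambda_{s,}])(\ell_*[S^1])=\int_{S^1\times[0,1]}\bar\ell_s^*\omega.
\]
Letting $w_s=\tfrac{\partial}{\partial s}\bar\ell_s$ be the associated section of $\bar\ell_s^*TX$, Remark~\ref{rem:car} combined with $d\omega=0$ gives $\tfrac{\partial}{\partial s}\bar\ell_s^*\omega=d(i_{w_s}\omega)$, and Stokes' theorem yields
\[
\frac{\partial}{\partial s}\flux([\Lambda_{s,}])(\ell_*[S^1])=\int_{S^1\times\{1\}}i_{w_s}\omega-\int_{S^1\times\{0\}}i_{w_s}\omega.
\]
The integral over $S^1\times\{0\}$ vanishes because $\bar\ell_s(u,0)=\ell(u)$ is independent of $s$, so $w_s$ vanishes there.

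For the $t=1$ boundary, write $q_s=f_{s,0}^{-1}\circ\ell:S^1\to L$, so that $\bar\ell_s(u,1)=f_{s,1}\circ q_s(u)$ and
\[
w_s(u,1)=(\partial_s f_{s,1})|_{q_s(u)}+df_{s,1}(\partial_s q_s).
\]
The second summand is tangent to $\Lambda_{s,1}$, as is $\partial_u\bar\ell_s(u,1)=df_{s,1}(\partial_u q_s)$; since $\Lambda_{s,1}$ is Lagrangian, this piece contributes zero to $\omega(w_s,\partial_u\bar\ell_s)$. Hence only $\partial_s f_{s,1}$ contributes, and by Lemma~\ref{lm:tan} applied to the lifting $f_{s,1}$ of $\Lambda_{s,1}$ with parameter $s$, we have $f_{s,1}^*\alpha_{s,1}=i_{\partial_s f_{s,1}}\omega$. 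Therefore the restriction of $i_{w_s}\omega$ to $S^1\times\{1\}$ equals $q_s^*(f_{s,1}^*\alpha_{s,1})$, and using the diffeomorphism $f_{s,0}\circ q_s=\ell$ to change variables gives
\[
\int_{S^1\times\{1\}}i_{w_s}\omega=\int_{S^1}q_s^*(f_{s,1}^*\alpha_{s,1})=\int_\ell (f_{s,0})_*(f_{s,1}^*\alpha_{s,1}).
\]

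Since $\alpha_{s,1}$ is closed by Lemma~\ref{lm:tan}, the $1$-form $(f_{s,0})_*(f_{s,1}^*\alpha_{s,1})$ on $\Lambda_0$ is closed as well, and the arbitrariness of $\ell$ together with the isomorphism $H^1(\Lambda_0,\R)\simeq\mathrm{Hom}(H_1(\Lambda_0,\R),\R)$ yields the claimed formula. The main technical point of the argument is the observation that the tangential-to-$\Lambda_{s,1}$ part of $w_s$ is killed against $\partial_u\bar\ell_s$ by the Lagrangian condition; this is exactly what allows the answer to be expressed in terms of the canonical tangent vector $\alpha_{s,1}\in T_{\Lambda_{s,1}}\L$ and hence to be independent of the auxiliary lift. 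Everything else is Stokes' theorem and unpacking definitions.
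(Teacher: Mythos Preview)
Your proof is correct and follows essentially the same route as the paper: differentiate the defining integral using Cartan's formula (Remark~\ref{rem:car}), apply Stokes' theorem, and identify the boundary contribution at $t=1$ with the pull-back of $\alpha_{s,1}$. The paper's version is terser, jumping directly from $\int_{\partial(S^1\times[0,1])}i_{v_s}\omega$ to $\int_{S^1}\ell^*(f_{s,0})_*f_{s,1}^*\alpha_{s,1}$, whereas you spell out explicitly why the $t=0$ boundary vanishes and why the tangential part of $w_s$ at $t=1$ is killed by the Lagrangian condition --- but these are exactly the details implicit in the paper's last equality.
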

\begin{proof}
Let $\ell : S^1 \to \Lambda_0$ and let $\bar \ell_s : S^1 \times [0,1] \to X$ be the smooth family of maps defined by
\[
\bar \ell_s(u,t) = f_{s,t}(f_{s,0}^{-1} \circ \ell(u)).
\]
Let $v_s$ be the vector field along $\bar\ell_s$ given by $v_s = \frac{d}{ds} \bar\ell_s.$
By Remark~\ref{rem:car} and Stokes' theorem, we have
\begin{align*}
\frac{\partial}{\partial s} \flux([\Lambda_{s,}])(\ell_*([S^1])) &= \frac{\partial}{\partial s} \int_{S^1 \times [0,1]} \bar \ell_s^*\omega \\
&= \int_{S^1 \times [0,1]} d i_{v_s} \omega \\
& = \int_{\partial(S^1 \times [0,1])} i_{v_s} \omega \\
& = \int_{S^1} \ell^* (f_{s,0})_*f_{s,1}^*\alpha_{s,1}.
\end{align*}
\end{proof}
The following corollary is immediate.
\begin{cy}\label{cy:ex}
The path $\Lambda_{,1} = \{\Lambda_{s,1}\}_{s \in [0,1]}$ is exact if and only if $\flux([\Lambda_{s,}])$ is constant.
\end{cy}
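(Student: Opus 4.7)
The plan is to read off Corollary~\ref{cy:ex} directly from Lemma~\ref{lm:v1f}, whose formula for $\frac{\partial}{\partial s}\flux([\Lambda_{s,}])$ expresses the time derivative of the flux as the cohomology class of a certain $1$-form built from the velocity $\alpha_{s,1}$ of the endpoint path.

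First I would recall that, by Lemma~\ref{lm:tan}, $\alpha_{s,1} \in A^1(\Lambda_{s,1})$ is precisely the intrinsic time derivative $\frac{d}{ds}\Lambda_{s,1}$, which is always closed, and that by definition the path $\Lambda_{,1}$ is exact if and only if $\alpha_{s,1}$ is exact on $\Lambda_{s,1}$ for every $s$, i.e.\ $[\alpha_{s,1}] = 0 \in H^1(\Lambda_{s,1};\R)$ for all $s$. Next I would observe that the map $(f_{s,0})_* f_{s,1}^*$ appearing in Lemma~\ref{lm:v1f} is the pullback along the diffeomorphism $f_{s,0}\circ f_{s,1}^{-1}\colon \Lambda_{s,1} \to \Lambda_0$, so it induces an isomorphism $H^1(\Lambda_{s,1};\R)\xrightarrow{\sim} H^1(\Lambda_0;\R)$. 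Consequently
\[
\bigl[(f_{s,0})_* f_{s,1}^*\alpha_{s,1}\bigr] = 0 \in H^1(\Lambda_0;\R)
\qquad \Longleftrightarrow \qquad [\alpha_{s,1}] = 0 \in H^1(\Lambda_{s,1};\R).
\]

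Combining these two observations with Lemma~\ref{lm:v1f} gives the equivalence: $\flux([\Lambda_{s,}])$ is constant in $s$ iff $\frac{\partial}{\partial s}\flux([\Lambda_{s,}]) = 0$ for every $s$, iff $\alpha_{s,1}$ is exact for every $s$, iff the endpoint path $\Lambda_{,1}$ is exact. There is essentially no obstacle here; the only thing worth being careful about is noting that the statement is independent of the choice of lifting $f_{s,}$, which is already built into the well-definedness of $\flux([\Lambda_{s,}])$ from Lemma~\ref{lm:flux} and does not require separate verification.
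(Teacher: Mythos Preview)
Your proof is correct and is exactly the unpacking of why the paper declares the corollary ``immediate'' from Lemma~\ref{lm:v1f}. One tiny notational slip: $(f_{s,0})_* f_{s,1}^*$ is pullback along $f_{s,1}\circ f_{s,0}^{-1}\colon \Lambda_0\to\Lambda_{s,1}$ (equivalently, pushforward along the diffeomorphism you wrote), but this does not affect the argument since either way it induces an isomorphism on $H^1$.
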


For simplicity, in the following we assume that $X$ and $L$ are compact. Fix $\Lambda_* \in \L.$ Let $G_* \subset H^1(\Lambda_*,\R)$ denote the subgroup given by
\[
G_* = \{\flux([\Lambda])| \Lambda_0 = \Lambda_1 = \Lambda_*\}.
\]
It seems natural to investigate necessary and sufficient conditions for $G_*$ to be discrete. The analogous question for the flux of Hamiltonian symplectomorphisms, known as the flux conjecture, was resolved unconditionally in the affirmative by Ono~\cite{On06}.
Furthermore, we have the following implication.
\begin{lm}\label{lm:cl}
If $G_*$ is discrete then the $\ham(X,\omega)$ orbit of $\Lambda_*$ is closed in $\L$ in the $C^1$ topology.
\end{lm}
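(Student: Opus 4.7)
The plan is to suppose a sequence $\Lambda_n \in \OO_*$ converges to some $\Lambda_\infty \in \L$ in the $C^1$ topology and to show $\Lambda_\infty$ must lie in $\OO_*$. By Weinstein's Lagrangian neighborhood theorem, a fixed tubular neighborhood $N$ of $\Lambda_\infty$ is symplectically identified with a neighborhood of the zero section in $T^*\Lambda_\infty$; for $n$ large, $\Lambda_n\subset N$ is the graph of a closed $1$-form $\alpha_n\in A^1(\Lambda_\infty)$ with $\alpha_n\to 0$ in $C^1$. The core of the argument is to establish that $[\alpha_n]=0$ in $H^1(\Lambda_\infty,\R)$ for all sufficiently large $n$: once this is known, a primitive $h_n$ of $\alpha_n$ extends via the Weinstein chart to a Hamiltonian on $X$ whose time-one flow carries $\Lambda_\infty$ to $\Lambda_n$, and composing with the Hamiltonian isotopy $\Lambda_*\to\Lambda_n$ furnished by $\OO_*$-membership exhibits $\Lambda_\infty\in\OO_*$.

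To extract $[\alpha_n]$ as a Lagrangian flux, I pick a Hamiltonian isotopy $\{\phi_{n,t}\}$ generating an exact path $\sigma_n$ from $\Lambda_*$ to $\Lambda_n$, and consider the linear path $\gamma_n(s)=\mathrm{graph}(s\alpha_n)$ inside $N$, running from $\Lambda_\infty$ to $\Lambda_n$. A direct integration in $T^*\Lambda_\infty$ coordinates yields $\flux(\gamma_n)=[\alpha_n]\in H^1(\Lambda_\infty,\R)$, while the Cartan-type identity $i_{\partial_t}(\bar\ell^*\omega)=-d(H_{n,t}\circ\bar\ell)$ applied to a Hamiltonian-flow cylinder immediately gives $\flux(\sigma_n)=0$ (and likewise for $\sigma_m^{-1}$). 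For indices $n,m$ I then form the closed loop $L_{n,m}=\sigma_n\#\gamma_n^{-1}\#\gamma_m\#\sigma_m^{-1}$ based at $\Lambda_*$; by the definition of $G_*$, $\flux(L_{n,m})\in G_*$. Since the $\sigma$-contributions vanish, the flux is concentrated in the two Weinstein pieces, and inserting the canonical identifications $\Lambda_n\cong\Lambda_\infty$ from the Weinstein projection $\pi:N\to\Lambda_\infty$ expresses $\flux(L_{n,m})$ as the pullback to $H^1(\Lambda_*,\R)$ of the period integral of $\alpha_m-\alpha_n$ along loops in $\Lambda_\infty$ obtained by transporting loops in $\Lambda_*$ through the chosen isotopies.

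Because $\alpha_n,\alpha_m\to 0$ in $C^0$, these period integrals tend to zero along any fixed set of generators of $H_1(\Lambda_\infty,\Z)$; combined with the discreteness of $G_*$, I conclude $\flux(L_{n,m})=0$ for $n,m$ sufficiently large. A short linear-algebra step—using that $[\alpha_n]\to 0$ in the finite-dimensional vector space $H^1(\Lambda_\infty,\R)$ and that the relevant transports are invertible—then forces $[\alpha_n]=0$ for $n$ large. The chief technical obstacle is controlling these transports: each lift of $\sigma_n$ induces a different identification $H^1(\Lambda_\infty,\R)\to H^1(\Lambda_*,\R)$, and there is no a priori uniform control over how these vary with $n$ under the possible action of the mapping class group of $L$. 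The key device for handling this is to choose the lifts via the fixed Weinstein projection $\pi$, which provides a canonical identification $\Lambda_n\cong\Lambda_\infty$ for large $n$ and isolates the $n$-dependence in a form that the discreteness of $G_*$ can absorb.
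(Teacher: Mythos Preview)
Your approach is genuinely different from the paper's and is essentially correct, but the final paragraph mishandles the transport issue.

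The paper's proof is much shorter and more global: since $G_*$ is discrete, the quotient $H^1(\Lambda_*)/G_*$ is a manifold, and the map $\overline\flux:\L_*\to H^1(\Lambda_*)/G_*$ defined by $\overline\flux(\Gamma)=[\flux([\Lambda])]$ (for any path $\Lambda$ from $\Lambda_*$ to $\Gamma$) is a submersion. Hence $\overline\flux^{-1}(0)$ is a closed submanifold, and by Corollary~\ref{cy:ex} any path in $\overline\flux^{-1}(0)$ starting at $\Lambda_*$ has constant flux $0$ and is therefore exact. Thus the Hamiltonian orbit is a path component of a closed submanifold, hence closed. Your sequential Weinstein argument is more explicit and avoids treating $\L$ as an infinite-dimensional manifold with a submersion to $H^1(\Lambda_*)/G_*$; the paper's argument is cleaner but relies on that formalism.

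The gap in your write-up is the claim that choosing lifts via the Weinstein projection $\pi$ resolves the transport problem. First, different lifts of the \emph{same} $\sigma_n$ already induce the same map on $H^1$, since they differ by an isotopy of $L$; so the issue is not the choice of lift but the $n$-dependence of the path $\sigma_n$ itself. Second, the Weinstein projection only identifies $H^1(\Lambda_n)\cong H^1(\Lambda_\infty)$; it says nothing about the isomorphism $T_{\sigma_n}^*:H^1(\Lambda_\infty)\to H^1(\Lambda_*)$, which is what actually varies with $n$. Since there is no bound on $\|T_{\sigma_n}^*\|$, you cannot conclude that $\flux(L_{n,m})=T_{\sigma_n}^*([\alpha_m]-[\alpha_n])$ tends to $0$ as $n,m\to\infty$ jointly.

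The argument is easily repaired: \emph{fix} one index $n$. Then $T_{\sigma_n}^*$ is a single linear isomorphism, and as $m\to\infty$ one has $[\alpha_m]\to 0$, hence
\[
\flux(L_{n,m})=T_{\sigma_n}^*([\alpha_m]-[\alpha_n])\longrightarrow -\,T_{\sigma_n}^*[\alpha_n].
\]
A convergent sequence in the discrete set $G_*$ is eventually constant, so for all large $m$ one gets $T_{\sigma_n}^*[\alpha_m]=0$, i.e.\ $[\alpha_m]=0$. From there your concluding step goes through.
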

Ono~\cite{On07} proved the $\ham(X,\omega)$ orbit of $\Lambda_*$ to be closed under the assumption that $\Lambda_*$ is unobstructed and has vanishing Maslov class.

Let $\L_*$ be the path connected component of $\Lambda_*$ in $\L$ and let
\[
\overline\flux : \L_* \longrightarrow H^1(\Lambda_*)/G_*
\]
be the map given by $\overline \flux(\Gamma) = [\flux([\Lambda])]$ for $\Lambda$ a path in $\L_*$ connecting $\Lambda_*$ and $\Gamma.$
\begin{proof}[Proof of Lemma~\ref{lm:cl}]
Suppose $G_*$ is discrete. Then $H^1(\Lambda_*)/G_*$ is a manifold. It is not hard to check that $0$ is a regular value of $\overline \flux.$ So $\overline\flux^{-1}(0) \subset \L_*$ is a closed submanifold. By Corollary~\ref{cy:ex}, the $\ham(X,\omega)$ orbit of $\Lambda_*$ is the path connected component of $\Lambda_*$ in $\overline\flux^{-1}(0).$ So, it must also be closed.
\end{proof}
In the following, we denote by
\[
r : H^1(X) \longrightarrow H^1(\Lambda_*)
\]
the restriction map. Assuming that $r$ is surjective, we obtain further implications of $G_*$ being discrete from the following lemmas. We consider a path $\Lambda : [a,b] \to \L$ with $\Lambda_0 = \Lambda_*$ and write $\alpha_t = \frac{d}{dt}\Lambda_t.$
\begin{lm}\label{lm:symp}
Suppose $r$ is surjective, and let $s$ be a right inverse to $r.$ There exists a path of symplectomorphisms $\phi = \{\phi_t\}_{t \in [0,1]}$ of $X$ generated by a family of vector fields $\{\xi_t\}_{t\in [0,1]}$ such that
\[
\phi_t(\Lambda_*) = \Lambda_t, \qquad s\circ (\phi_t|_{\Lambda_*})^*([\alpha_t]) = [ i_{\xi_t}\omega].
\]
In particular,
\[
\flux([\phi]) = s(\flux([\Lambda])).
\]
\end{lm}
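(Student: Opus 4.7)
The plan is to construct $\phi$ in two stages: first produce an auxiliary symplectic isotopy $\phi^0 = \{\phi^0_t\}$ with $\phi^0_t(\Lambda_*) = \Lambda_t$, and then right-compose with a symplectic isotopy $\psi_t$ preserving $\Lambda_*$ setwise, tuned so that the resulting cohomology class $[i_{\xi_t}\omega] \in H^1(X,\R)$ equals $\sigma_t := s\bigl((\phi^0_t|_{\Lambda_*})^*[\alpha_t]\bigr)$. For the first stage, note that because $\phi^0_t$ is isotopic to $\id_X$, the restriction map $r_t:H^1(X,\R)\to H^1(\Lambda_t,\R)$ satisfies $r = (\phi^0_t|_{\Lambda_*})^*\circ r_t$, so surjectivity of $r$ forces surjectivity of each $r_t$; hence each $\alpha_t$ extends to a closed 1-form $\beta_t$ on $X$. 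One obtains such a $\beta_t$ smoothly in $t$ by picking a closed 1-form in the class $r_t^{-1}[\alpha_t]$ (via parametric Weinstein tubular neighborhoods and partitions of unity) and subtracting the differential of a smooth extension of the primitive of the exact discrepancy on $\Lambda_t$. Letting $\xi^0_t$ be the symplectic vector field with $i_{\xi^0_t}\omega = \beta_t$, the ODE argument underlying Lemmas~\ref{lm:lift} and~\ref{lm:un} shows its flow $\phi^0_t$ satisfies $\phi^0_t(\Lambda_*)=\Lambda_t$.

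Applying $r$ to $\sigma_t - [i_{\xi^0_t}\omega]$ and using $r\circ s = \id$ together with the diagram identity $r([i_{\xi^0_t}\omega]) = (\phi^0_t|_{\Lambda_*})^*r_t([i_{\xi^0_t}\omega]) = (\phi^0_t|_{\Lambda_*})^*[\alpha_t]$ shows that the difference lies in $\ker(r)$. I would fix once and for all symplectic vector fields $\nu_1,\dots,\nu_k$ on $X$ whose classes $[i_{\nu_j}\omega]$ span $\ker(r)$ and whose restrictions to $\Lambda_*$ are tangent to $\Lambda_*$; the tangency is arranged by adjusting each closed 1-form representative by the differential of a smooth extension of a primitive of its restriction to $\Lambda_*$, which is exact because the class lies in $\ker(r)$. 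Now write $\sigma_t - [i_{\xi^0_t}\omega] = \sum_j c_j(t)[i_{\nu_j}\omega]$ with smooth $c_j$, set $\eta_t = \sum_j c_j(t)\nu_j$, and let $\psi_t$ be its flow. Since $\eta_t$ is tangent to $\Lambda_*$ along $\Lambda_*$, this flow preserves $\Lambda_*$, and the composition $\phi_t = \phi^0_t \circ \psi_t$ still satisfies $\phi_t(\Lambda_*)=\Lambda_t$. A chain rule computation gives the generator $\xi_t = \xi^0_t + (\phi^0_t)_*\eta_t$; since $(\phi^0_t)_*\eta_t$ is tangent to $\Lambda_t$ along the Lagrangian $\Lambda_t$, one has $i_{\xi_t}\omega|_{\Lambda_t} = \alpha_t$, and since pushforward by $\phi^{0}_t$ acts trivially on $H^1(X,\R)$, one also has $[i_{\xi_t}\omega] = [i_{\xi^0_t}\omega] + \sum_j c_j(t)[i_{\nu_j}\omega] = \sigma_t$, as required.

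For the flux identity, observe that $\psi_t|_{\Lambda_*}$ is isotopic to $\id_{\Lambda_*}$, so $(\phi_t|_{\Lambda_*})^* = (\phi^0_t|_{\Lambda_*})^*$ on $H^1(\Lambda_*,\R)$. Integrating $[i_{\xi_t}\omega] = s\bigl((\phi_t|_{\Lambda_*})^*[\alpha_t]\bigr)$ over $t\in[0,1]$, using the $\R$-linearity of $s$ together with the identification $\flux([\Lambda]) = \int_0^1 (\phi_t|_{\Lambda_*})^*[\alpha_t]\,dt$ derived from the definition of Lagrangian flux and Lemma~\ref{lm:v1f}, then yields $\flux([\phi]) = s(\flux([\Lambda]))$. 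The principal obstacle is the smooth $t$-dependence of $\beta_t$ in the first stage; this is a standard but technical application of parametric Weinstein neighborhoods rather than a conceptual difficulty.
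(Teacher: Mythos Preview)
Your argument is correct and reaches the same conclusion, but by a genuinely different route. The paper gives a one-stage construction: fixing a Riemannian metric on $X$, it lets $\hat\alpha_t$ be the harmonic representative of the target class $s\circ(\phi_t|_{\Lambda_*})^*[\alpha_t]$ in $H^1(X,\R)$---this class is well-defined before $\phi_t$ is, since any two lifts of the path $\Lambda$ induce the same map $H^1(\Lambda_t)\to H^1(\Lambda_*)$---and then sets $\tilde\alpha_t = \hat\alpha_t + dQ_t$, where $Q_t$ extends a primitive of the exact form $\alpha_t - \hat\alpha_t|_{\Lambda_t}$. The flow of the symplectic vector field dual to $\tilde\alpha_t$ is the required $\phi_t$. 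Your two-stage construction instead first produces an arbitrary symplectic lift $\phi^0_t$ and then corrects the cohomology class by right-composing with the flow of a $\Lambda_*$-tangent combination of fixed symplectic vector fields spanning $\ker r$. The paper's version is shorter, and smooth $t$-dependence of $\hat\alpha_t$ comes for free from Hodge theory; your version avoids any appeal to harmonic forms but pays with the chain-rule computation for the generator of $\phi^0_t\circ\psi_t$ and the check that pushforward by $\phi^0_t$ acts trivially on $H^1(X,\R)$. Both proofs rest on the same observation that $(\phi_t|_{\Lambda_*})^*$ on cohomology is independent of the particular lift---this is exactly what makes your identification $(\phi_t|_{\Lambda_*})^* = (\phi^0_t|_{\Lambda_*})^*$ legitimate---and both finish the flux identity in the same way via Lemma~\ref{lm:v1f}.
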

\begin{proof}
Fix a Riemannian metric on $X.$ Let $\hat \alpha_t$ be the harmonic representative of $s\circ (\phi_t|_{\Lambda_*})^*([\alpha_t]).$ Choose a smooth family of functions $q_t : \Lambda_t \to \R$ such that
\[
dq_t = \alpha_t - \hat \alpha_t|_{\Lambda_t}.
\]
Using the fact that the graph of $\Lambda$ in $X \times [0,1]$ is a submanifold, extend $q_t$ to a family of functions $Q_t$ on $X.$ Setting $\tilde \alpha_t = \hat\alpha_t + dQ_t,$ it follows that
\[
\tilde \alpha_t|_{\Lambda_t} = \alpha_t.
\]
Let $\xi_t$ be the symplectic vector field defined by $i_{\xi_t} \omega = \tilde\alpha_t$ and let $\phi_t$ be the corresponding symplectic isotopy. Then a slight generalization of Lemma~\ref{lm:un} implies that $\phi_t(\Lambda_*) = \Lambda_t,$ and
\[
[i_{\xi_t}\omega] = [\tilde \alpha_t] = [\hat \alpha_t] = s\circ (\phi_t|_{\Lambda_*})^*([\alpha_t]).
\]
For the final claim, note that by Lemma~\ref{lm:v1f} we have
\[
\flux([\Lambda]) = \int_0^1 [(\phi_t|_{\Lambda_*})^* \alpha_t] dt.
\]
So,
\begin{align*}
\flux([\phi]) & = \int_0^1 [i_{\xi_t}\omega]dt \\
&= \int_0^1 s \circ (\phi_t|_{\Lambda_*})^*[\alpha_t] dt \\
&= s \left(\int_0^1 [(\phi_t|_{\Lambda_*})^* \alpha_t] dt\right) \\
&= s (\flux([\Lambda])).
\end{align*}
\end{proof}

\begin{lm}\label{lm:flux0}
Suppose $r$ is surjective. Then $\flux(\Lambda) = 0$ if and only if $\Lambda$ is homotopic with endpoints fixed to an exact path.
\end{lm}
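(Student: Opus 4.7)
The plan is to prove both directions separately, with the reverse direction being the substantive one.

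For the forward direction, suppose $\Lambda$ is homotopic rel endpoints to an exact path $\tilde\Lambda = \{\tilde\Lambda_t\}$ with $\frac{d}{dt}\tilde\Lambda_t = dh_t$. By Lemma~\ref{lm:flux}, $\flux([\Lambda]) = \flux([\tilde\Lambda])$, so it suffices to show the latter vanishes. Following the integral formula derived at the end of the proof of Lemma~\ref{lm:symp}, we have $\flux([\tilde\Lambda]) = \int_0^1 [(\tilde f_0)_* \tilde f_t^* \, dh_t]\,dt$ for any lifting $\tilde f$ of $\tilde\Lambda$, and since $\tilde f_t^* dh_t = d(h_t \circ \tilde f_t)$ is exact, each integrand represents the zero class in $H^1(\Lambda_*,\R)$.

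For the reverse direction, assume $\flux([\Lambda]) = 0$. Since $r$ is surjective, fix a right inverse $s : H^1(\Lambda_*,\R) \to H^1(X,\R)$ and apply Lemma~\ref{lm:symp} to obtain a path of symplectomorphisms $\phi = \{\phi_t\}_{t \in [0,1]}$ with $\phi_0 = \id_X$, $\phi_t(\Lambda_*) = \Lambda_t$, and $\flux([\phi]) = s(\flux([\Lambda])) = 0$ in $H^1(X,\R)$. Now invoke the standard fact that the kernel of the symplectic flux homomorphism $\flux : \widetilde\symp(X,\omega) \to H^1(X,\R)$ is exactly $\widetilde\ham(X,\omega)$ (see, e.g.,~\cite[Chapter~10]{MS98}). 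Thus $[\phi] \in \widetilde\ham(X,\omega)$; that is, there exists an endpoint-preserving homotopy $\{\phi^u\}_{u \in [0,1]}$ of paths of symplectomorphisms from $\phi^0 = \phi$ to a Hamiltonian path $\tilde\phi := \phi^1$. Setting $\Lambda^u_t = \phi^u_t(\Lambda_*)$ produces a homotopy in $\L$ from $\Lambda^0 = \Lambda$ to $\tilde\Lambda := \Lambda^1$, with $\Lambda^u_0 = \Lambda_*$ and $\Lambda^u_1 = \phi_1(\Lambda_*) = \Lambda_1$ fixed throughout. Finally, since $\tilde\phi$ is Hamiltonian, Lemma~\ref{lm:un} implies that $\tilde\Lambda$ is exact, completing the argument.

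The main conceptual step is the application of Lemma~\ref{lm:symp}, which is where surjectivity of $r$ enters: it bridges the vanishing of Lagrangian flux to the vanishing of symplectic flux of an extension. Once this bridge is in place, the result reduces to the classical characterization of Hamiltonian isotopies as symplectic isotopies of zero flux. The main technical point to keep in mind is that the homotopy of symplectic isotopies supplied by the classical result indeed fixes both endpoints, since it is realized inside $\widetilde\symp(X,\omega)$ where the starting point $\id_X$ and the terminal symplectomorphism $\phi_1$ are preserved by definition; this ensures the induced homotopy of Lagrangian paths fixes $\Lambda_0$ and $\Lambda_1$.
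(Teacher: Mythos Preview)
Your proof is correct and follows essentially the same route as the paper: the forward direction is the paper's appeal to Corollary~\ref{cy:ex} (you unpack it via the integral formula derived from Lemma~\ref{lm:v1f}), and the reverse direction is identical---apply Lemma~\ref{lm:symp} to lift $\Lambda$ to a symplectic isotopy $\phi$ with vanishing flux, invoke the classical characterization of $\widetilde\ham$ as the kernel of $\flux$ (the paper cites \cite[Theorem~10.12]{MS98}), and push the resulting homotopy of $\phi$ down to $\Lambda_*$.
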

\begin{proof}
The ``if'' part of the lemma follows from Corollary~\ref{cy:ex}. For the opposite implication, we proceed as follows. Choose a right inverse $s$ of $r$ and let $\phi$ be as in Lemma~\ref{lm:symp}. So, $\flux([\phi]) = s(\flux([\Lambda])) = 0.$ Thus by~\cite[Theorem 10.12]{MS98} we know that $\phi$ is homotopic with endpoints fixed to a Hamiltonian isotopy. Applying the homotopy of $\phi$ to $\Lambda_*,$ we obtain the desired homotopy of $\Lambda.$
\end{proof}

\begin{cy}\label{cy:of0}
Suppose $r$ is surjective. Then for $\Gamma \in \L_*,$ we have $\overline \flux(\Gamma) = 0$ if and only if $\Gamma$ is Hamiltonian isotopic to $\Lambda_*.$
\end{cy}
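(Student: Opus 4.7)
The strategy is to reduce both implications to Lemma~\ref{lm:flux0}, which characterizes when a single path has vanishing flux, by using the freedom to modify a chosen path by a loop at $\Lambda_*$.

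For the ``if'' direction, suppose $\Gamma$ is Hamiltonian isotopic to $\Lambda_*$. Then there exists a path $\phi : [0,1]\to\ham(X,\omega)$ with $\phi_0=\id_X$ and $\phi_1(\Lambda_*)=\Gamma$. Setting $\Lambda_t=\phi_t(\Lambda_*)$ gives by Lemma~\ref{lm:un} an exact path from $\Lambda_*$ to $\Gamma$. Applying Corollary~\ref{cy:ex} to the trivial one-parameter family of exact paths shows that $\flux([\Lambda])$ agrees with the flux of the constant path at $\Lambda_*$, which is $0$. Hence $\overline{\flux}(\Gamma)=0$.

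For the ``only if'' direction, let $\Lambda$ be any path in $\L_*$ from $\Lambda_*$ to $\Gamma$, so that $\overline{\flux}(\Gamma)=[\flux([\Lambda])]=0$ means $\flux([\Lambda])\in G_*$. By the definition of $G_*$, there is a loop $\Lambda'$ at $\Lambda_*$ with $\flux([\Lambda'])=\flux([\Lambda])$. Form the concatenated path $\tilde\Lambda$ obtained by traversing $\Lambda'$ in reverse from $\Lambda_*$ to $\Lambda_*$, followed by $\Lambda$ from $\Lambda_*$ to $\Gamma$. Since $\flux$ is additive under concatenation (the defining integral splits over the two subintervals), we have $\flux([\tilde\Lambda])=-\flux([\Lambda'])+\flux([\Lambda])=0$. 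Invoking Lemma~\ref{lm:flux0}, which uses the surjectivity of $r$, we conclude that $\tilde\Lambda$ is homotopic with endpoints fixed to an exact path $\Lambda''$ from $\Lambda_*$ to $\Gamma$. Finally, Lemma~\ref{lm:elfh} applied to $\Lambda''$ produces a smooth family in $\ham(X,\omega)$ carrying $\Lambda_*$ to $\Gamma$, so $\Gamma$ is Hamiltonian isotopic to $\Lambda_*$.

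The only step requiring genuine input is the concatenation trick: one must verify that flux is additive along concatenation so that subtracting the loop contribution $\flux([\Lambda'])$ actually yields a path with zero flux. This is immediate from the definition of $\flux([\Lambda])$ as an integral over $S^1\times[0,1]$ and the fact that piecewise smooth homotopy classes behave well under concatenation, but it is the place where the hypothesis $\flux([\Lambda])\in G_*$ is converted into something Lemma~\ref{lm:flux0} can digest. Everything else is a direct appeal to results already established in the excerpt.
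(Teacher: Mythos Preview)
Your proof is correct and follows essentially the same approach as the paper: the paper's proof likewise chooses a path $\Lambda$ from $\Lambda_*$ to $\Gamma$, composes with a loop based at $\Lambda_*$ to arrange $\flux([\Lambda])=0$, and then invokes Lemma~\ref{lm:flux0}. The paper is terser---it leaves the ``if'' direction and the final passage from an exact path to a Hamiltonian isotopy implicit in Lemma~\ref{lm:flux0} and Lemma~\ref{lm:elfh}---but your added detail is accurate.
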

\begin{proof}
Suppose $\Gamma \in \L$ satisfies $\overline\flux(\Gamma) = 0.$ Let $\Lambda : [0,1] \to \L_*$ with $\Lambda_0 = \Lambda_*$ and $\Lambda_1 = \Gamma.$ After possibly replacing $\Lambda$ by its composition with a loop based at $\Lambda_*,$ we may assume $\flux([\Lambda]) = 0.$ So, the corollary follows from Lemma~\ref{lm:flux0}.
\end{proof}

\begin{cy}\label{cy:quot}
Suppose $r$ is surjective. Then $\overline\flux$ induces a bijection $\L_*/\ham(X,\omega) \simeq H^1(\Lambda)/G_*.$ If $G_*$ is discrete, the bijection is a homeomorphism. In particular, $\L_*/\ham(X,\omega)$ is Hausdorff.
\end{cy}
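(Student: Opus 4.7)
My plan splits the corollary into three claims: that $\overline\flux$ descends to a well-defined map $\bar F : \L_*/\ham(X,\omega) \to H^1(\Lambda_*,\R)/G_*$, that $\bar F$ is a bijection, and that if $G_*$ is discrete then $\bar F$ is a homeomorphism. For the descent, I apply Corollary~\ref{cy:ex}: given $\Gamma$ and $\Gamma' = \psi_1(\Gamma)$ for a Hamiltonian isotopy $\{\psi_s\}_{s \in [0,1]}$ with $\psi_0 = \id$, fix a path $\Lambda$ from $\Lambda_*$ to $\Gamma$ and form a two-parameter family $\Lambda_{s,t}$ that reparameterizes $\Lambda$ on $t \in [0,1/2]$ and equals $\psi_{(2t-1)s}(\Gamma)$ on $t \in [1/2,1]$. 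The endpoint $\Lambda_{s,0} = \Lambda_*$ is fixed while $s \mapsto \Lambda_{s,1} = \psi_s(\Gamma)$ is an exact path, so Corollary~\ref{cy:ex} forces $\flux([\Lambda_{0,}]) = \flux([\Lambda_{1,}])$, i.e.\ $\overline\flux(\Gamma) = \overline\flux(\Gamma')$.

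Surjectivity of $\bar F$ is the easy direction: for $[a] \in H^1(\Lambda_*,\R)/G_*$, pick a right inverse $s$ of $r$, represent $s(a) \in H^1(X,\R)$ by a closed $1$-form $\alpha$, and flow by the $\omega$-dual symplectic vector field $\xi$ to obtain $\phi_t$. The Lagrangian path $\Lambda_t = \phi_t(\Lambda_*)$ has flux $[\alpha|_{\Lambda_*}] = r(s(a)) = a$, so $\overline\flux(\phi_1(\Lambda_*)) = [a]$. The main obstacle is injectivity. Suppose $\overline\flux(\Gamma_1) = \overline\flux(\Gamma_2)$ and choose paths $\Lambda^{(i)}$ from $\Lambda_*$ to $\Gamma_i$. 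After precomposing $\Lambda^{(2)}$ with a loop at $\Lambda_*$ whose flux equals $\flux([\Lambda^{(1)}]) - \flux([\Lambda^{(2)}]) \in G_*$, I may assume $\flux([\Lambda^{(1)}]) = \flux([\Lambda^{(2)}])$. Applying Lemma~\ref{lm:symp} to each path produces symplectic isotopies $\phi^{(i)}$ with $\phi^{(i)}_1(\Lambda_*) = \Gamma_i$ and $\flux([\phi^{(i)}]) = s(\flux([\Lambda^{(i)}]))$, so the two fluxes in $H^1(X,\R)$ agree. Since flux is a homomorphism on $\widetilde\symp(X,\omega)$ with kernel $\widetilde\ham(X,\omega)$, the class $[\phi^{(2)}]\cdot[\phi^{(1)}]^{-1}$ lies in $\widetilde\ham$, so $\tau = \phi^{(2)}_1 \circ (\phi^{(1)}_1)^{-1} \in \ham(X,\omega)$, and $\tau(\Gamma_1) = \Gamma_2$.

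When $G_*$ is discrete, $H^1(\Lambda_*,\R)/G_*$ is a Hausdorff manifold. Continuity of $\bar F$ is automatic since $\overline\flux$ is continuous and $\L_*/\ham(X,\omega)$ carries the quotient topology. For continuity of $\bar F^{-1}$, I construct a continuous local section of $\overline\flux$ near any $\Gamma_0 \in \L_*$: parameterize nearby classes by small $\epsilon \in H^1(\Lambda_*,\R)$, lift via $s$ to $H^1(X,\R)$, represent by a continuously varying closed $1$-form $\alpha_\epsilon$ on $X$, and apply the time-$1$ flow of the $\omega$-dual of $\alpha_\epsilon$ to $\Gamma_0$. Passing to the quotient produces a continuous local inverse of $\bar F$, so $\bar F$ is a homeomorphism, and $\L_*/\ham(X,\omega)$ inherits Hausdorffness from the target.
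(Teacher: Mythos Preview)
Your proof is correct and follows essentially the same approach as the paper: surjectivity via symplectic flows along a right inverse $s$ of $r$, injectivity via Lemma~\ref{lm:symp} together with the fact that the kernel of the flux homomorphism on $\widetilde\symp(X,\omega)$ is $\widetilde\ham(X,\omega)$, and the homeomorphism claim via continuous local sections built from the same flow construction. The only organizational differences are that you supply an explicit descent argument (which the paper leaves implicit) and prove injectivity directly from Lemma~\ref{lm:symp} rather than quoting Corollary~\ref{cy:of0}, whose proof unwinds to the same computation.
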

\begin{proof}
First, we show that $\overline\flux : \L_* \to H^1(\Lambda)/G_*$ is surjective.
Given $a \in H^1(X;\R),$ it is easy to construct a path of symplectomorphisms $\phi : [0,1] \to \symp(X,\omega)$ with $\phi_0 = \id$ such that $\flux([\phi]) = a.$ Define $\Lambda : [0,1] \to \L$ by $\Lambda_t = \phi_t(\Lambda_*).$ Then $\flux([\Lambda])  = r(a).$ Since $a$ was arbitrary and $r$ is surjective, we conclude that $\overline \flux$ is surjective. By Corollary~\ref{cy:of0}, the induced map is one to one. If $G_*$ is discrete, then the quotient map $H^1(\Lambda_*) \to H^1(\Lambda_*)/G_*$ admits continuous local right inverses. So the proof of surjectivity of $\overline\flux$ shows that $\overline\flux$ admits continuous local right inverses.
\end{proof}

Ono~\cite{On07} showed that if $r$ is surjective and $\Lambda_*$ is unobstructed with vanishing Maslov class, then $\L_*/\ham(X,\omega)$ is Hausdorff. See~\cite{Fu01} for an explanation of how this relates to the stability of $\Lambda_*.$

\bibliographystyle{../amsabbrvc}
\bibliography{../ref}

\vspace{.5 cm}
\noindent
Institute of Mathematics \\
Hebrew University, Givat Ram \\
Jerusalem, 91904, Israel \\

\end{document}